\numberwithin{equation}{section}
\newcommand\R{\mathbb{R}}
\newcommand{\oD}{ \!{\buildrel \circ 
\over D}}
\newcommand{\D}{\mathcal{D}}
\newcommand{\T}{\mathcal{T}}
\newcommand{\kRP}{\mathcal{R}^{Ric_k>0}}
\newcommand{\RP}{\mathcal{R}^{Ric>0}}
\newcommand{\rd}{\mathcal{R}^{Ric>0}_{rd}}
\newcommand{\rdo}{\mathcal{R}^{Ric>0}_{rd,1}}
\newcommand{\rdk}{\mathcal{R}^{Ric_k>0}_{rd}}
\newcommand{\rdok}{\mathcal{R}^{Ric_k>0}_{rd,1}}
\newcommand{\bdy}{\mathcal{R}^{Ric_k>0}(\bar{M})^*}
\newcommand{\In}{\mathrm{In}}
\DeclareMathOperator{\Ric}{Ric}
\DeclareMathOperator{\Rk}{Ric_k}
\theoremstyle{plain}
\newtheorem{theorem}{Theorem}[section]
\newtheorem{lemma}[theorem]{Lemma}
\newtheorem{cor}[theorem]{Corollary}
\newtheorem{proposition}[theorem]{Proposition}
\theoremstyle{definition}
\newtheorem{definition}[theorem]{Definition}
\newtheorem{obs}[theorem]{Observation}
\theoremstyle{remark}
\newtheorem{remark}[theorem]{Remark}
\newtheorem{question}[theorem]{Question}
\begin{document}

     
     
     

\title{H-space and loop space structures for intermediate curvatures}
      
\author{Mark Walsh}
\address{
Department of Mathematics and Statistics\\
Maynooth University \\
Maynooth \\
Ireland
}
\email{mark.walsh@mu.ie, david.wraith@mu.ie}

\author{David J.~Wraith}


\maketitle

\begin{abstract} For dimensions $n\geq 3$ and $k\in\{2, \cdots, n\}$, we show that the space of metrics of $k$-positive Ricci curvature on the sphere $S^{n}$ has the structure of an $H$-space with a homotopy commutative, homotopy associative product operation. We further show, using the theory of operads and results of Boardman, Vogt and May that the path component of this space containing the round metric is weakly homotopy equivalent to an $n$-fold loop space.
\end{abstract}

\section{Introduction}\label{intro}

There has been a great deal of interest in recent years about the topology of the space of Riemannian metrics sastisfying given curvature conditions on a given manifold. (As a starting point for this topic, see \cite{TW}.) Interest has been mainly directed towards studying the homotopy and (co)homology groups of these spaces of metrics, with many results demonstrating that these algebraic invariants are often non-trivial. There are, of course, other aspects of topology which are not captured by computing homotopy and homology. In this paper, we focus on the existence of $H$-space structures and loop space structures in certain spaces of metrics.

Recall (see for example \cite[page 224]{Maunder}) that a topological space $X$ is said to be an $H$-space if it has a `multiplication' map $m:X \times X \to X$, with an identity element $e \in X$ such that $m\circ \iota_1 \simeq m\circ \iota_2\simeq \text{id}_X,$ where $\iota_1:X \to X \times X$ is the map $\iota_1(x)=(x,e),$ and $\iota_2(x)=(e,x).$ We will say that an $H$-space is (homotopy) associative if $m(m \times \text{id}_X) \simeq m(\text{id}_X \times m),$ and (homotopy) commutative if $m \simeq m\circ S,$ where $S$ is the `swapping' map $S:X \times X \to X$ given by $S(x_1,x_2)=(x_2,x_1).$

Recall also that $X$ is said to be a loop space (see \cite[page 216]{Maunder}) if there exists a based topological space $(Y,y_0)$ for which $X=\Omega Y,$ where $\Omega Y$ is the set of loops in $Y$ based at $y_0$, equipped with the compact-open topology. In this paper we will be particularly interested in iterated loop spaces, i.e. $\Omega^n Y=\Omega(\Omega(\cdots (\Omega Y) \cdots).$ Note that a loop space is always an $H$-space, as a multiplication map can be constructed by concatenating loops. On the other hand, the question of whether an $H$-space is a loop space is highly non-trivial in general. We will return to this point later.

The motivation behind the results in this paper was the work of the first author in \cite{Wa}. This paper studied the space of positive scalar curvature metrics on the sphere $S^n$, and demonstrated the existence of an $H$-space structure on this space of metrics whenever $n\ge 3.$ It was also shown that the path-component of the round metric admits an $n$-fold loop space structure. The current paper arose from exploring the extent to which these positive scalar curvature results continue to hold for stronger curvature conditions.

Before going any further, we must mention the fact that \cite{Wa} is not the only paper in the literature which studies $H$-space or loop space structures for positive scalar curvature metrics. In \cite{ERW}, Ebert and Randall-Williams prove something stronger than the main result in \cite{Wa}, namely that a certain union of path components in the space of metrics of positive scalar curvature on $S^{n}$ (containing the path component of the round metric) is homotopy equivalent to an infinite loop space (at least when $n\geq 6$). The methods used here are heavily homotopy theoretic and very different to those used in \cite{Wa}. As the authors of \cite{ERW} point out, it is difficult to compare these constructions and it is unclear as to whether or not the structures built in \cite{ERW} extend those of \cite{Wa}. More recently still, in \cite{Frenck}, G. Frenck has demonstrated that for any compact spin manifold of dimension at least six, the resulting space of positive scalar curvature metrics admits a homotopy-commutative, homotopy-associative H-space structure. 

Although the classical scalar, Ricci, and sectional curvatures have always been of central importance in Riemannian geometry, there is currently an increasing interest in more subtle notions of curvature. For example, there is the notion of $p$-curvature, which interpolates between positive scalar (when $p=0$) and positive sectional curvature (when $p=n-2$), (see for example \cite{La}, \cite{BL1}). Then there is the notion of $k$-positive curvature (also known in the literature as the $k^{th}$-intermediate Ricci curvature or $k^{th}$-Ricci curvature), see for example  \cite{Hart}, \cite{Sh1}, \cite{Sh2}, \cite{Wu}, \cite{Wi}, \cite{GX}, \cite{GW1}, \cite{GW2}, \cite{GW3} \cite{Mo1}, \cite{Mo2}. We take special note of the paper \cite{Kor}, which concerns the topology of the space of metrics satisfying so-called `surgery stable' curvature conditions, building on ideas developed in \cite{Ho}. This includes positive scalar curvature, as well as a number of other conditions. To the best of the authors' knowledge, this paper is the first  to study spaces of metrics satisfying non-classical curvature conditions. For example, Corollaries D and E of that paper are results about the homotopy type of the space of metrics with positive $p$-curvature in the case $p=1$.  

In this paper we will focus on a curvature condition introduced by Wolfson in \cite{Wo}:
\begin{definition} We say that an $n$-dimensional Riemannian manifold has $k$-positive Ricci curvature if the sum of the $k$ smallest eigenvalues of the Ricci tensor is positive at all points. We will write this as $Ric_k>0.$
\end{definition}
Notice that $n$-positive Ricci curvature is just positive scalar curvature, and 1-positive Ricci curvature is the same as positive Ricci curvature. Thus the $k$-positive Ricci curvatures provide a very natural family of curvatures intermediate between positive scalar and positive Ricci curvature. We will denote by $\kRP(M)$ the space of all $k$-positive Ricci metrics on $M$ (equipped with the smooth topology).

The main results established in this paper are as follows:
\medskip

\noindent {\bf Theorem A.} {\em When $n \geq 3$ and $2 \le k\le n$, $\kRP(S^{n})$ has the structure of an $H$-space with a homotopy commutative, homotopy associative product.}
\\

\noindent {\bf Corollary B.} {\em When $n \geq 3$ and $2 \le k\le n$, the fundamental group of $\kRP(S^{n})$, with basepoint the standard round metric, is abelian.}
\\

\noindent {\bf Theorem C.} {\em When $n \geq 3$ and $2 \le k\le n$, the path component of $\kRP(S^{n})$ containing the round metric is weakly homotopy equivalent to an $n$-fold loop space.}
\medskip

A key feature of $k$-positive Ricci curvature is that under a certain codimension condition, it can be preserved by performing surgeries. The result here is as follows.
\begin{theorem}[\cite{Wo},\cite{Ho}] \label{thm:wolfson} 

Let $M^n$ be a closed Riemannian manifold with $k$-positive Ricci curvature, $2 \le k \le n$. Then any manifold obtained from $M$ by performing surgeries in codimension $q$ with $q \ge \max\{n+2-k,3\}$ also admits a metric of $k$-positive Ricci curvature. In particular if $M_1$ and $M_2$ are manifolds of dimension $n \ge 3$ which admit metrics of 2-positive Ricci curvature, then the connected sum $M_1 \sharp M_2$ also admits a metric of 2-positive Ricci curvature.
\end{theorem}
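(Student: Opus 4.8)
The statement is Wolfson's surgery theorem (\cite{Wo}), and also a special case of Hoelzel's general surgery theorem for surgery-stable curvature conditions (\cite{Ho}); the way I would prove it is the Gromov--Lawson surgery construction adapted to $\Ric_k>0$. A codimension-$q$ surgery on $M^n$ is performed along an embedded sphere $S^p\hookrightarrow M$ with $p=n-q$ and trivial normal bundle, replacing a tubular neighbourhood $S^p\times D^q$ by $D^{p+1}\times S^{q-1}$; since the metric is untouched outside this neighbourhood, the whole problem is local near $S^p$. Writing $g=dr^2+g_r$ near $S^p$, where $r$ denotes distance to $S^p$ and $g_r$ is the induced metric on the distance sphere of radius $r$, the first step is to deform $g$ so that on a smaller tube $\{r\le r_0\}$ one has $g_r$ close to $g|_{S^p}+r^2 g_{S^{q-1}}$; because $\Ric_k>0$ is an open condition and this region is compact, the deformation can be carried out keeping $\Ric_k>0$, shrinking $r_0$ as needed.

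The second step is the bending. One replaces the tube $\{r\le r_0\}$ by the metric $ds^2+g_{\phi(s)}$ induced on the graph $\{r=\phi(s)\}$ inside $M\times\R$, where the concave profile $\phi$ decreases from $\phi(s)=r$ near the outer boundary to a small constant $\phi\equiv\epsilon$ along an inner collar, and then caps the inner collar with the standard handle $D^{p+1}\times S^{q-1}(\epsilon)$ carrying an (almost) product metric. The result is a smooth metric agreeing with $g$ outside the original neighbourhood, and everything reduces to a curvature computation on the bent region and the handle.

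The main computation concerns the Ricci eigenvalues of the bent metric. In an adapted orthonormal frame these split into $n-q+1$ \emph{tangential} eigenvalues (the $S^p$ and $\partial_s$ directions) and $q-1$ \emph{spherical} eigenvalues (the $S^{q-1}$ directions). Modulo terms bounded on the compact collar, each spherical eigenvalue equals $(q-2)(1-(\phi')^2)\phi^{-2}-\phi''\phi^{-1}$, which for a concave $\phi$ with $|\phi'|$ bounded away from $1$ is $\gtrsim (q-2)\phi^{-2}$: positive and arbitrarily large once $q\ge 3$ and $\phi$ is small, which is exactly where the hypothesis $q\ge 3$ enters (for $q=2$ the factor $q-2$ vanishes and no positive curvature is produced). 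The tangential eigenvalues differ from curvatures of $g$ only by amounts governed by $\phi'$, $\phi''$ and $\phi'/\phi$, hence are bounded below on the collar. Now the hypothesis $q\ge n+2-k$ forces $n-q+1\le k-1$, so \emph{every} selection of $k$ Ricci directions in the bent region contains at least one spherical direction; therefore the sum of the $k$ smallest Ricci eigenvalues is (a quantity bounded below) $+\,(q-2)\phi^{-2}$, up to lower-order terms, which is positive once $\phi$ is small enough. The same mechanism keeps $\Ric_k>0$ on the handle, whose small round $S^{q-1}(\epsilon)$ factor supplies $q-1\ge 2$ large positive Ricci eigenvalues.

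The genuinely delicate part is the transition region, where $\phi$ is still comparable to $r_0$ and the spherical contribution has not yet become dominant: there one must check that the sign-indefinite corrections produced by the bending do not push the sum of any $k$ Ricci eigenvalues below zero. This is arranged by bending slowly, so that $\phi''$, $\phi''/\phi$ and $\phi'/\phi$ are small and the perturbation of the original (already $k$-positive) Ricci tensor stays inside the positive cone; it is precisely Hoelzel's notion of \emph{surgery-stability in codimension $q$}, and the bound $q\ge\max\{n+2-k,3\}$ is exactly what makes $\Ric_k>0$ surgery-stable, which Wolfson verifies. Finally, the last assertion is the case $p=0$: a connected sum is surgery on $S^0$, of codimension $q=n$, and for $n\ge 3$ one has $q=n=\max\{n+2-k,3\}$ with $k=2$, so the codimension hypothesis holds automatically and $M_1\sharp M_2$ admits a $2$-positive Ricci metric.
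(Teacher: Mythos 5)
The paper does not prove this theorem; it states it as a citation of \cite{Wo} and \cite{Ho}, so there is no in-text proof to compare against. Your sketch correctly reproduces the Gromov--Lawson style surgery argument that underlies both references: the standardization of the tube, the bending by a concave profile $\phi$, the observation that the $q-1$ spherical Ricci eigenvalues scale like $(q-2)\phi^{-2}$ (whence $q\ge 3$), the counting argument that $q\ge n+2-k$ forces every $k$-tuple of directions to include a spherical one, the slow-bending control in the transition region, and the identification of a connected sum as a codimension-$n$ surgery on $S^0$ (with $n=\max\{n,3\}=\max\{n+2-k,3\}$ when $k=2$ and $n\ge 3$). This is the same route as the cited sources, and the numerology all checks out.
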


It is the fact that $k$-positive Ricci curvature, $2 \le k \le n$, is preseved under connected sums which is crucial for the results in this paper. Naively, one might consider two $k$-positive Ricci metrics on $S^n$. We can join these by a connected sum within $Ric_k>0,$ to give a new metric on a sphere. (For technical reasons, it turns out to be better to perform connected sums between each of the spheres and a round sphere of fixed radius, a so-called `docking station'.) The problem with this construction is that the original metrics were metrics on a standard smooth sphere, and to exhibit a well-defined $H$-space multiplication, our final metric must also be a metric on the standard sphere. Thus we must find a diffeomorphism between the standard sphere and the connected sum arrangement we construct, with which to pull back the metric. Of course such diffeomorphisms exist in abundance, but in order to have a well-defined multiplication, we must show that such a diffeomorphism can be chosen in a standard way, depending smoothly on the individual metrics involved. (We call this the `connected sum contraction procedure'.) 

In Section 2, starting from a result about deforming positive Ricci curvature metrics \cite{Wr}, we deduce a local rounding result for $k$-positive Ricci curvature, $1 \le k \le n$. This is useful for the constructing and manipulating the connected sum constructions mentioned above: in practice it means that we only have to deal with warped product metrics. It also means that for $H$-space considerations everything can be controlled in terms of a single parameter $R$ arising from each metric, (and for loop space considerations two parameters $R$ and $\epsilon$).

Given that working with metrics which are locally round is convenient, in Section 3 we prove that a certain space of such metrics on the sphere has an $H$-space structure. This space has the same homotopy type as the full space of $k$-positive Ricci metrics, and so this is enough to prove Theorem A: it is an easy exercise (see for example \cite[page 251]{Maunder}) to show that if $X$ is an $H$-space and $X \simeq Y,$ then $Y$ is also an $H$-space.  The existence of the homotopy identity turns out to be particularly delicate, and for this we develop a `warped product deformation procedure', which turns out to be equally useful in our loop space considerations.


On a more technical level, since we are concerned with defining smooth operations on spaces of metrics, we have to work to remove all choices from our constructions so that the operations are unambiguously defined, and can be seen to vary smoothly with the input metrics. For this reason we take a different approach to that of Wolfson when constructing connected sums: it turns out to be convenient to construct the main building block for the tube in a single piece. (The Wolfson approach is based on the classic Gromov-Lawson construction \cite{GL}.) On the other hand, this requires some smoothing at the ends. Indeed the `warped product deformation procedure' mentioned above also requires a smoothing argument. We therefore have to take speical care of how we smooth: in part to remove choices, and in part to make sure that the smoothings themselves vary smoothly with the input metrics.

The final section of the paper, Section 4, is dedicated to proving Theorem C. Our approach here mimics that of \cite{Wa} for positive scalar curvature. The key to establishing (iterated) loop space structure is provided by loop space recognition results due to Boardman and Vogt \cite{BV}, and May \cite{May}. These results rely on the concept of an operad, and in particular operad actions on topological spaces. This is explained briefly in Section 4, but see \cite{Wa} and the references therein for a more detailed exposition. In short, to establish Theorem C, it suffices to exhibit an action of a certain operad on the space of $k$-positive Ricci metrics on $S^n$. The construction we perform here relies heavily on the techniques developed in Section 3.

In conclusion, one might speculate about the existence of $H$-space or loop space structures for other spaces of metrics. In particular, since the ability to do connected sums within $k$-positive Ricci curvature for $2 \le k \le n$ is a crucial feature behind the results in this paper, it is natural to ask whether similar arguments can be made for other curvature conditions which allow connected sums. We conclude, however, with the following question:
\begin{question}
Does the space of Ricci positive metrics on $S^n$ admit an $H$-space or (iterated) loop space structure?
\end{question}
\noindent It should be noted that the techniques used in this paper appear to offer no insight into this question.


\section{Spaces of $k$-positive Ricci metrics}

Suppose that $M^n$ is a closed manifold which supports a metric with $\Rk>0$ for some $k\in\{1, \cdots, n\}$. Our aim in this section is to define and analyse certain spaces of metrics on $M$. Recall that $\kRP(M)$ denotes the space of all $k$-positive Ricci metrics on $M$. Now choose a basepoint $x_0 \in M$, and for reasons that will become clear later, we will fix a basis $\{e_1,...,e_n\}$ for the tangent space $T_{x_0}M$. We will denote by $\rdk(M)$ the space of $k$-positive Ricci metrics on $M$ for which the restriction to some neighbourhood of $x_0$ is round. Define $\rdok(M)$ to be the subspace of $\rdk$ for which the round neighbourhood contains a distance sphere about the basepoint $x_0$ on which the induced metric has radius 1. 
If $M$ actually supports a Ricci positive metric (i.e. a 1-positive Ricci metric) then we will simply write $\RP(M),$ $\rd(M)$ and $\rdo(M)$ respectively for these spaces of metrics. Notice that for any metric in $\rdok,$ we can, and will, assume that removing the open ball about $x_0$ bounded by the unit (intrinsic) radius distance sphere leaves a concave boundary.
In summary, we have specified spaces which include as follows:
$$ \rdok(M)\subset\rdk(M)\subset \kRP(M).$$

\begin{figure}[!htbp]
\vspace{1cm}
\hspace{0.5cm}
\begin{picture}(0,0)
\includegraphics{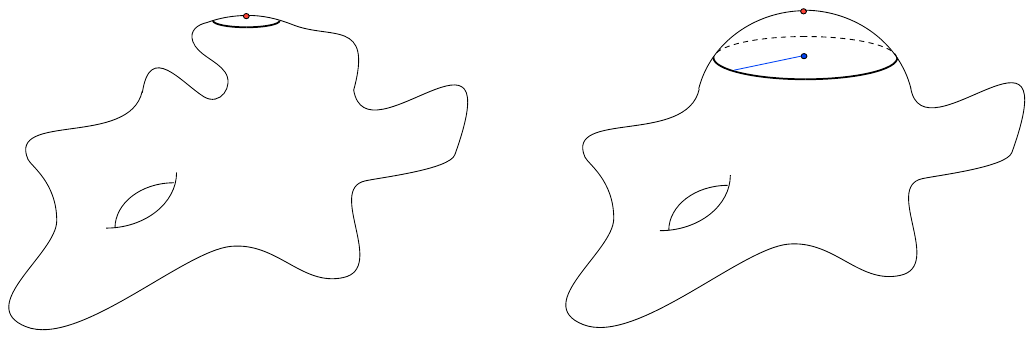}%
\end{picture}
\setlength{\unitlength}{3947sp}
\begingroup\makeatletter\ifx\SetFigFont\undefined%
\gdef\SetFigFont#1#2#3#4#5{%
  \reset@font\fontsize{#1}{#2pt}%
  \fontfamily{#3}\fontseries{#4}\fontshape{#5}
  \selectfont}%
\fi\endgroup%
\begin{picture}(5079,1559)(1902,-7227)
\put(2900,-5600){\makebox(0,0)[lb]{\smash{{\SetFigFont{10}{8}{\rmdefault}{\mddefault}{\updefault}{\color[rgb]{0,0,0}$x_0$}%
}}}}
\put(5550,-5600){\makebox(0,0)[lb]{\smash{{\SetFigFont{10}{8}{\rmdefault}{\mddefault}{\updefault}{\color[rgb]{0,0,0}$x_0$}%
}}}}
\put(5450,-5900){\makebox(0,0)[lb]{\smash{{\SetFigFont{10}{8}{\rmdefault}{\mddefault}{\updefault}{\color[rgb]{0,0,1}$1$}
}}}}
\end{picture}%
\caption{Sample elements of $\rd(M)$ (left) and $\rdo(M)$ (right)}
\label{SampleMetrics}
\end{figure}

Our starting point for the analysis of these spaces of metrics is a result of the second author \cite{Wr}, which generalizes to arbitrary submanifolds in a Ricci positive setting a result for curves in negative Ricci curvature due to Gao \cite{G}.
\begin{theorem}
\label{David2002}{\rm (\cite{Wr})}
Let $X$ be a manifold and $Y$ a compact submanifold with $\dim Y< \dim X$.  Let $g_1$ be a Ricci positive metric on $X$
and $g_0$ a Ricci positive metric defined in an open neighbourhood of $Y$.  If the 1-jets of $g_0$ and $g_1$ are equal at every point in $Y$, then there exists a Ricci positive metric $\bar g$ on $X$ and numbers $\epsilon$, $\epsilon'$ with
$0<\epsilon'<\epsilon$, such that $\bar g|_{X\setminus N_\epsilon(Y)}$ agrees with
$g_1$ and $\bar g|_{N_{\epsilon'}(Y)}$ agrees with $g_0$. (Here, the $\epsilon$ and $\epsilon'$-neighbourhoods are defined with respect to the metric $g_1$.)
\end{theorem}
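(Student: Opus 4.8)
The strategy, following Gao's technique, is to leave $g_1$ untouched outside a thin tubular neighbourhood of $Y$ and, inside it, to damp the difference tensor $h:=g_0-g_1$ to zero along a carefully chosen radial profile. Fix a tubular neighbourhood $N(Y)$ of $Y$ with respect to $g_1$, let $r\colon N(Y)\to[0,r_0)$ be the $g_1$-distance to $Y$, and use Fermi coordinates adapted to $Y$, so that $r$ is the Euclidean radius in the normal directions. Equality of the $1$-jets of $g_0$ and $g_1$ along $Y$ means $h$ and its first derivatives vanish on $Y$, so after shrinking $N(Y)$ one has the pointwise bounds
\[
|h|=O(r^2),\qquad |\partial h|=O(r),\qquad |\partial^2 h|=O(1),\qquad |\partial^2 r|=O(1/r)
\]
on $N(Y)$, the last being the standard bound on the Hessian of a distance function. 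For $0<\epsilon'<\epsilon\le r_0$ I would set $\bar g:=g_1+\psi(r)\,h$ on $N(Y)$ and $\bar g:=g_1$ elsewhere, where $\psi\colon[0,\infty)\to[0,1]$ is smooth, $\psi\equiv1$ on $[0,\epsilon']$ and $\psi\equiv0$ on $[\epsilon,\infty)$. The subtle point in choosing $\psi$ is that it must be built on a \emph{logarithmic} scale, $\psi(r)=\chi\!\bigl(\tfrac{\log(\epsilon/r)}{\log(\epsilon/\epsilon')}\bigr)$ for a fixed transition profile $\chi$, so that $r\,|\psi'(r)|+r^2\,|\psi''(r)|\le C_\chi/\log(\epsilon/\epsilon')$; an ordinary cutoff concentrated in $[\epsilon',\epsilon]$ produces a $\psi''h$-type error of size $\asymp\epsilon^2/(\epsilon-\epsilon')^2$, which cannot be made small.

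On $X\setminus N_\epsilon(Y)$ we have $\bar g=g_1$ and on $N_{\epsilon'}(Y)$ we have $\bar g=g_0$, both Ricci positive, so the only issue is the annulus $A:=N_\epsilon(Y)\setminus N_{\epsilon'}(Y)$, where $\bar g=g_1+\psi(r)h$. Here the main obstacle surfaces: in $\partial^2(\psi(r)h)$ the term $\psi\,\partial^2 h$ is only $O(1)$, not small, whatever $\psi$ is, so one cannot merely say $\bar g$ is $C^2$-close to a Ricci positive metric. The way around this, which I would isolate as a lemma, is the observation that the Ricci tensor is \emph{affine-linear in the second derivatives of the metric}, with coefficients depending only on $g$ and $\partial g$. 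Since $g_1$, $g_0$ and every convex combination $g_s:=g_1+s\,h=(1-s)g_1+s\,g_0$, $s\in[0,1]$, share the same $1$-jet along $Y$, this linearity forces
\[
\Ric(g_s)\big|_Y=(1-s)\,\Ric(g_1)\big|_Y+s\,\Ric(g_0)\big|_Y ,
\]
which is positive definite for all $s\in[0,1]$. By compactness of $Y$ and joint continuity of $(s,p)\mapsto\Ric(g_s)(p)$, there exist $\eta\in(0,r_0]$ and $\delta>0$ with $g_s$ a genuine metric and $\Ric(g_s)\ge\delta\,g_1$ on $N_\eta(Y)$ for every $s\in[0,1]$.

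Fixing $\epsilon\le\eta$, the proof concludes with a \emph{freezing} comparison on $A$. At $p\in A$ with $r(p)=\rho$, the metrics $\bar g=g_1+\psi(r)h$ and the constant-coefficient metric $g_{\psi(\rho)}=g_1+\psi(\rho)h$ agree at $p$, and their $1$- and $2$-jets at $p$ differ only through derivatives landing on $\psi(r)$ — by expressions built from $\psi'h$, $\psi'\partial h$, $\psi'(\partial^2 r)h$ and $\psi''h$. By the bounds of the first paragraph and the logarithmic choice of $\psi$, each such term is $O\bigl(1/\log(\epsilon/\epsilon')\bigr)$, uniformly in $p\in A$, the implicit constants depending only on $g_0,g_1,Y,\eta$. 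Since $\Ric$ is smooth, hence Lipschitz on the relevant bounded set of $2$-jets (on which $\bar g$ stays uniformly positive definite and close to $g_1$),
\[
\Ric(\bar g)(p)=\Ric\bigl(g_{\psi(\rho)}\bigr)(p)+O\bigl(1/\log(\epsilon/\epsilon')\bigr)\ \ge\ \delta\,g_1(p)+O\bigl(1/\log(\epsilon/\epsilon')\bigr),
\]
using the lemma with $s=\psi(\rho)\in[0,1]$ and $\rho<\eta$. Choosing $\epsilon'$ small enough that the error is dominated by $\tfrac{\delta}{2}g_1$ yields $\Ric(\bar g)>0$ on $A$, hence on all of $X$; finally $\bar g$ is smooth because it equals $g_0$ near $Y$, equals $g_1$ near $\partial N(Y)$, and $\psi(r)$ is smooth where $r>0$. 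The resulting $\epsilon,\epsilon'$ (defined via $g_1$-distance) are as required. The hard part is the affine-linearity lemma: without it, the $\psi\,\partial^2 h$ contribution to the curvature of $\bar g$ on the transition annulus is uncontrolled.
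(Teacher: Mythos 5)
Your proof is correct and recovers the same high-level strategy as the construction the paper outlines from \cite{Wr}: interpolate $\bar g=(1-\psi)g_1+\psi g_0$ with a slowly varying radial cutoff and show $\Ric(\bar g)$ stays positive across the transition annulus. Where the paper simply quotes \cite[Lemma 1.9]{Wr} (reproduced as Lemma \ref{Wr-lemma}) as the engine — the estimate $\Ric_{\bar g}=s\Ric_{g_0}+(1-s)\Ric_{g_1}+A$ with $|A|\le c\lambda^{1/2}|x|^\lambda$, using the cutoff $\psi=f(|x|^\lambda/\sqrt\lambda)$ — you supply a self-contained derivation of that estimate via two cleanly separated steps: (i) the observation that $\Ric$ is affine-linear in $\partial^2 g$ with $1$-jet-dependent coefficients, which forces $\Ric(g_s)|_Y$ to be the exact convex combination (and hence positive, with uniform margin $\delta$ near $Y$ by compactness), and (ii) a freezing comparison of the $2$-jet of $\bar g$ at $p$ with that of the constant-$s$ metric $g_{\psi(\rho)}$, with the discrepancy controlled by $r|\psi'|+r^2|\psi''|=O(1/\log(\epsilon/\epsilon'))$ thanks to the logarithmic-scale cutoff, together with $|h|=O(r^2)$, $|\partial h|=O(r)$, $|\partial^2 r|=O(1/r)$. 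Your cutoff parametrization differs cosmetically from the paper's (you fix $\epsilon$ and send $\epsilon'\to 0$; the paper shrinks both via the parameter $\lambda$), but the ratio $\epsilon/\epsilon'\to\infty$ is the operative fact in both. The affine-linearity lemma is indeed the right conceptual explanation of why matching $1$-jets suffices, and makes the role of the hypothesis transparent. One small presentational point: the Lipschitz constant for $\Ric$ as a function of $2$-jets and the bound on $\partial^2 r$ are coordinate-dependent, so you should note explicitly that compactness of $Y$ lets you cover $\overline{N_\eta(Y)}$ by finitely many Fermi charts with uniform constants; as written this is implicit but not stated.
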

The case we are interested in is the situation where $Y$ is a point, so we are redefining the metric in an $\epsilon$-neighbourhood of the point. It is straightforward to check that the proof of Theorem \ref{David2002} continues to hold in this special case.

We remark that one might alternatively approach some of the results in this section, and also certain constructions in later sections, using ideas from \cite{Kor}. We note, for example, the analogy between Theorem 3.5 in that paper, and Corollary \ref{homotopy_equivalence} in the current paper.

We will need to use a few of the background details (in the special case $Y=\{x_0\} \subset M$) behind the proof of
Theorem \ref{David2002}. Let us assume that $g_1$ is a given Ricci positive metric on $M$, and that $g_0$ is a round metric of some radius defined in a neighbourhood of a basepoint $x_0 \in M$, and that the 1-jets of $g_0$ and
  $g_1$ agree at $x_0$. We introduce a cut-off function $f:\R \to \R$, which can be any smooth function satisfying:
\begin{equation*}
  f(r)=\left\{\begin{array}{ll}
  1 & r\le 1 \\
  0 & r\ge 2
\end{array}
\right.
\end{equation*}
with $0 \le f \le 1$ and $f'\le 0$. Suppose that the round metric $g_0$ is defined on $M$ in $B_{\epsilon}(x_0)$, a $g_1$-ball about $x_0$ of radius $\epsilon$. We define the following function on this ball:
\begin{equation*}
  \psi(x)=f(|x|^\lambda/\sqrt \lambda),
\end{equation*}
where $|x|$ denotes the $g_1$-distance of $x$ from $x_0$, and $\lambda \in (0,1)$ is a constant to be determined. We then define the following metric in $B_{\epsilon}(x_0)$:
\begin{equation*}
  \bar{g}(x)=\psi(x)g_0(x)+(1-\psi(x))g_1(x).
\end{equation*}
We note that $\bar{g}$ is smooth metric as a consequence of the fact that the functions $f$ and in turn $\psi$ are smooth. We also observe that 
\begin{equation*}
\bar{g} = \left\{\begin{array}{lll}
  g_1 & \mbox{if} & |x| \ge (2\sqrt \lambda)^{\frac{1}{\lambda}},\\
  g_0 & \mbox{if} & |x|<(\sqrt{\lambda})^{\frac{1}{\lambda}} \ .
\end{array}\right.
\end{equation*}
That this transition from $g_0$ to $g_1$ can take place within $B_{\epsilon}(x_0)$ follows from the fact that
\begin{equation}\label{lim}  
\lim_{\lambda\to 0^+} (2\sqrt \lambda)^{1/\lambda}=0,
\end{equation}
so given any $\epsilon>0$ we simply have to choose $\lambda$ sufficiently small so that $(2\sqrt\lambda)^{1/\lambda}<\epsilon.$

Of course we wish to alter the metric $g_1$ on $M$ by replacing it by $\bar{g}$ on $B_{\epsilon}(x_0)$.  However we need this change to preserve the positive Ricci curvature condition. The key result here
is \cite[Lemma 1.9]{Wr}, which adapted to our situation yields:
\begin{lemma} 
\label{Wr-lemma}{\rm \cite[Lemma 1.9]{Wr}}
  Let $u_1,u_2$ be $g_1$-unit tangent vectors at $x \in
  B_{\epsilon}(x_0)$.  Then setting $s=\psi(x)$ we
  have
\begin{equation*}
\Ric_{\bar{g}}(u_1,u_2)=s\Ric_{g_0}(u_1,u_2)+(1-s)\Ric_{g_1}(u_1,u_2)+A(u_1,u_2)
\end{equation*}
where
\begin{equation*}
  |A(u_1,u_2)| \le c\lambda^{1/2}|x|^\lambda
\end{equation*}
and $c$ depends on the dimension, the choice of $f$ and the metrics $g_0$ and $g_1$.
\end{lemma}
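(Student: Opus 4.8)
The plan is to prove the estimate by a direct curvature computation in geodesic normal coordinates for $g_1$ centred at $x_0$, the point being to isolate the terms of $\Ric_{\bar g}$ that carry a derivative of the cut-off $\psi$ from those that do not, and to exploit the fact that the $1$-jets of $g_0$ and $g_1$ agree at $x_0$. First I would set $h:=g_0-g_1$, so that $\bar g=\psi g_0+(1-\psi)g_1=g_1+\psi h$; since $g_1$ is in normal form at $x_0$ and the $1$-jets coincide there, Taylor's theorem gives $h=O(|x|^2)$, $\partial h=O(|x|)$, $\partial^2 h=O(1)$ in these coordinates, and $g_0,g_1=\delta+O(|x|^2)$. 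I would also record, from $\psi(x)=f(|x|^\lambda/\sqrt\lambda)$ and the chain rule, the bounds $|\partial\psi|\le c\lambda^{1/2}|x|^{\lambda-1}$ and $|\partial^2\psi|\le c(\lambda|x|^{2\lambda-2}+\lambda^{1/2}|x|^{\lambda-2})$ with $c$ depending on $\|f'\|_\infty,\|f''\|_\infty$, and note that $\psi$ is locally constant off the collar $C_\lambda:=\{\lambda^{1/(2\lambda)}\le|x|\le(2\sqrt\lambda)^{1/\lambda}\}$, on which $\sqrt\lambda\le|x|^\lambda\le 2\sqrt\lambda$. Off $C_\lambda$ one has $\bar g\in\{g_0,g_1\}$ and $A\equiv 0$, so it suffices to work on $C_\lambda$.

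Writing $G_s:=sg_0+(1-s)g_1=g_1+sh$ and substituting $\bar g_{ij}(x)=G_{\psi(x),ij}(x)$ into $R_{ij}=\partial_k\Gamma^k_{ij}-\partial_i\Gamma^k_{kj}+\Gamma^k_{kl}\Gamma^l_{ij}-\Gamma^k_{il}\Gamma^l_{kj}$, one has $\partial_a\bar g_{ij}=\partial_aG_{\psi,ij}+h_{ij}\partial_a\psi$ and $\partial_a\partial_b\bar g_{ij}=\partial_a\partial_bG_{\psi,ij}+\partial_ah_{ij}\partial_b\psi+\partial_bh_{ij}\partial_a\psi+h_{ij}\partial_a\partial_b\psi$. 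Collecting terms, $R_{ij}(\bar g)$ splits as the Ricci tensor $\Ric_{G_{\psi(x)}}(x)$ of the \emph{frozen} metric $G_s$ at $s=\psi(x)$, plus corrections, each of which contains a factor $\partial\psi$ or $\partial^2\psi$ together with a factor $h$ or $\partial h$ (the surviving coefficients $\bar g^{kl},\Gamma(G_\psi),\partial G_\psi,\partial\bar g^{-1}$ being $O(1),O(|x|),O(|x|),O(|x|)$ near $x_0$). On $C_\lambda$ the dominant corrections are $\partial\psi\cdot\partial h$ and $\partial^2\psi\cdot h$, both of size $O(\lambda^{1/2}|x|^\lambda)$ by the bounds above (using $|x|^\lambda\le 2\sqrt\lambda$ to absorb the $\lambda|x|^{2\lambda-2}$ piece); every other correction carries an extra factor of $|x|<\epsilon<1$, so the total correction is $O(\lambda^{1/2}|x|^\lambda)$.

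Finally I would compare $\Ric_{G_{\psi(x)}}(x)$ with $s\Ric_{g_0}(x)+(1-s)\Ric_{g_1}(x)$, $s=\psi(x)$. Since $g_0(x),g_1(x)$ are positive definite, so is every $G_s(x)$, and $s\mapsto\Ric_{G_s}(x)$ is smooth on $[0,1]$ with endpoint values $\Ric_{g_1}(x)$ and $\Ric_{g_0}(x)$; the remainder bound for linear interpolation then gives $|\Ric_{G_s}(x)-s\Ric_{g_0}(x)-(1-s)\Ric_{g_1}(x)|\le\tfrac18\sup_{[0,1]}|\partial_s^2\Ric_{G_s}(x)|$. The key observation is that $\partial_s^2\Ric_{G_s}(x)=D^2\Ric|_{G_s}(h,h)(x)$ is $O(|x|^2)$: as $\Ric$ is affine-linear in the metric's second derivatives with coefficients rational in the metric, its second variation in the direction $h$ only sees $\partial^2 h$ through a coefficient that is itself $O(h)=O(|x|^2)$, while the remaining terms are bilinear in $(h,\partial h)$, hence also $O(|x|^2)$. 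Combining the two error sources, $|A|=O(\lambda^{1/2}|x|^\lambda)+O(|x|^2)$ on $C_\lambda$; and since $|x|\le(2\sqrt\lambda)^{1/\lambda}$ there forces $|x|^{2-\lambda}\le(2\sqrt\lambda)^{(2-\lambda)/\lambda}\le\lambda^{1/2}$ once $\lambda$ is small — which is precisely what the exponent $|x|^\lambda/\sqrt\lambda$ in $\psi$ secures, cf.\ \eqref{lim} — the term $O(|x|^2)=O(|x|^\lambda\cdot|x|^{2-\lambda})$ is absorbed, yielding $|A(u_1,u_2)|\le c\lambda^{1/2}|x|^\lambda$ for $g_1$-unit $u_1,u_2$ (the unit normalization and the $O(|x|^2)$ gap between $g_1$ and $\delta$ affecting only $c$). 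I expect the main obstacle to be the bookkeeping in the second step: one must verify that no correction carries a bare $\partial^2\psi$ — of size $\lambda^{1/2}|x|^{\lambda-2}$, which would be fatal — unaccompanied by a compensating $h=O(|x|^2)$, and in parallel that the second variation of $\Ric$ produces nothing of size $O(1)$; both rest on the quasilinear structure of $\Ric$ and the coincidence of the $1$-jets of $g_0,g_1$ at $x_0$.
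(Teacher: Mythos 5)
This lemma carries no proof in the paper: it is imported verbatim, with citation, from Wraith's \cite[Lemma 1.9]{Wr}, which in turn adapts the computation in Gao \cite{G}. Your reconstruction — working in $g_1$-normal coordinates, restricting attention to the collar where $\psi$ varies, pairing the large derivatives $\partial\psi,\partial^2\psi$ with the vanishing factors $h=g_0-g_1=O(|x|^2)$, $\partial h=O(|x|)$ coming from the $1$-jet coincidence, and absorbing the residual $O(|x|^2)$ interpolation error via $|x|^{2-\lambda}\le(2\sqrt\lambda)^{(2-\lambda)/\lambda}\le\lambda^{1/2}$ on the collar — is precisely the computation in those sources and is correct.
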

Thus we see that for $\lambda$ sufficiently small, the metric $\bar{g}$ has positive Ricci curvature, agrees with $g_1$ near the boundary of $B_{\epsilon}(x_0)$ (and hence fits smoothly into $(M\setminus B_{\epsilon}(x_0),g_1)$) and is round near $x_0$. We conclude that for the metric $g_1$ we can construct a metric in $\rd(M)$ by the above procedure. Furthermore, by performing a suitable global rescale if necessary, we can ensure the resulting metric actually lies in $\rdo(M).$ Notice that in the statement of Theorem \ref{David2002} we can take $\epsilon=(2\sqrt \lambda)^{1/\lambda}$ and $\epsilon'=(\sqrt \lambda)^{1/\lambda},$ for $\lambda$ sufficiently small depending on the metrics $g_0,g_1$. 

It follows easily from Lemma \ref{Wr-lemma} that we can widen the setting from positive Ricci curvature to $k$-positive Ricci curvature:  
\begin{proposition}
\label{kpos_rounding} 
Let $M^n$ be a manifold equipped with a $k$-positive Ricci metric $g_1,$ where $1 \le k \le n$. Then given any point $x_0 \in M,$ there are constants $R>0,$ $\epsilon$, $\epsilon'$ with $0<\epsilon'<\epsilon$, and a $k$-positive Ricci metric $\bar{g}$ on $M$ such that $\bar{g}$ is round of radius $R$ in the ball $B(x_0,\epsilon'),$ and agrees with $g_1$ in the complement of $B(x_0,\epsilon).$ Moreover, if $g_1$ is round of some radius $\rho$ in a neighbourhood of $x_0$, then we can take $R=\rho.$ More generally, we can arrange for the value of $R$ to depend smoothly on the metric, and $\epsilon,\epsilon'$ to depend continuously on the metric.
\end{proposition}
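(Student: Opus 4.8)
The plan is to leverage Theorem~\ref{David2002} and Lemma~\ref{Wr-lemma} directly: the construction of $\bar g$ described immediately before the statement already produces, from a Ricci positive metric, a metric that is round near $x_0$ and agrees with $g_1$ away from $x_0$, and the positivity estimate in Lemma~\ref{Wr-lemma} is a pointwise perturbation statement that does not care whether we track full positivity of $\Ric$ or only the sum of the $k$ smallest eigenvalues. So the first step is to verify that the cut-off construction makes sense starting from a $k$-positive Ricci metric $g_1$. Pick an orthonormal frame at $x_0$ with respect to $g_1$; set $g_0$ to be the round metric whose value and first derivatives at $x_0$ match those of $g_1$ (such a round metric of some radius $R=R(g_1)$ exists, since matching the $1$-jet at a point is a finite, solvable linear condition, and $R$ is determined by, hence depends smoothly on, the $2$-jet data of $g_1$ at $x_0$ — more precisely on $\operatorname{scal}_{g_1}(x_0)$ rescaled appropriately, or simply on the value of $g_1$ at $x_0$ together with the requirement that the metric be round; the point is that the defining relation for $R$ is smooth in $g_1$). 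Then form $\psi$ and $\bar g = \psi g_0 + (1-\psi) g_1$ on $B_\epsilon(x_0)$ exactly as in the quoted construction.

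The second step is the curvature estimate. By Lemma~\ref{Wr-lemma}, for any $g_1$-orthonormal pair $u_1,u_2$ at a point $x$ in the transition region we have $\Ric_{\bar g}(u_1,u_2) = s\,\Ric_{g_0}(u_1,u_2) + (1-s)\,\Ric_{g_1}(u_1,u_2) + A(u_1,u_2)$ with $|A| \le c\lambda^{1/2}|x|^\lambda$. Restricting to $u_1 = u_2 = u$ a unit vector, the Ricci \emph{quadratic form} of $\bar g$ is a convex combination of those of $g_0$ and $g_1$ up to an error bounded in operator norm by $c\lambda^{1/2}|x|^\lambda \le c\,\lambda^{1/2}\epsilon^\lambda$, which tends to $0$ as $\lambda\to 0^+$. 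Now the sum of the $k$ smallest eigenvalues of a symmetric bilinear form $Q$ is the infimum of $\sum_{i=1}^k Q(v_i,v_i)$ over $g_1$-orthonormal $k$-tuples $(v_1,\dots,v_k)$; this min-max description makes it $1$-Lipschitz (in fact monotone) with respect to the operator-norm order on forms, and it is superadditive under convex combinations in the sense that the $k$-eigenvalue-sum of $sQ_0 + (1-s)Q_1$ is at least $s$ times that of $Q_0$ plus $(1-s)$ times that of $Q_1$. Since $g_0$ is round it has $\Ric_k > 0$ everywhere, and $g_1$ has $\Ric_k > 0$ by hypothesis; on the compact set $\overline{B_\epsilon(x_0)}$ there is a uniform lower bound $\delta > 0$ for the $k$-eigenvalue-sums of both. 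Hence the $k$-eigenvalue-sum for $\bar g$ is at least $\delta - c\,\lambda^{1/2}\epsilon^\lambda > 0$ once $\lambda$ is small enough. Choosing $\lambda$ small enough also to guarantee $(2\sqrt\lambda)^{1/\lambda} < \epsilon$ (possible by \eqref{lim}) completes the construction, with $\epsilon = (2\sqrt\lambda)^{1/\lambda}$ and $\epsilon' = (\sqrt\lambda)^{1/\lambda}$.

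The third step handles the ``moreover'' clauses. If $g_1$ is already round of radius $\rho$ near $x_0$, we simply take $g_0 = g_1$ on that neighbourhood, so $\bar g = g_1$ there and $R = \rho$ with no perturbation needed. For the smooth/continuous dependence: $R$ is given by an explicit smooth function of the $1$-jet of $g_1$ at $x_0$ (the round-metric-matching condition), so $R$ depends smoothly on $g_1$ in the smooth topology. The choice of $\lambda$ — hence of $\epsilon,\epsilon'$ — must be made large enough to overcome the constant $c$ of Lemma~\ref{Wr-lemma} and the local curvature bounds, all of which vary continuously with $g_1$; so one can select $\lambda(g_1)$, e.g. as a continuous function bounded above by a continuous quantity extracted from these bounds, giving continuous dependence of $\epsilon,\epsilon'$. (One does not expect smooth dependence of $\epsilon,\epsilon'$ because the inequalities only pin them down up to an open condition; continuity suffices and is what is claimed.)

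The main obstacle, and the only place requiring genuine care, is the convex-combination/min-max argument for the $k$-eigenvalue-sum in step two: one must be sure that Lemma~\ref{Wr-lemma}, which is stated for pairs $(u_1,u_2)$ of $g_1$-unit vectors and hence really controls the full bilinear form $\Ric_{\bar g} - s\Ric_{g_0} - (1-s)\Ric_{g_1}$ in operator norm, genuinely gives operator-norm (not just diagonal) control, so that the eigenvalue-sum functional — which is not linear — can be bounded. This is exactly what the lemma provides, since controlling $A(u_1,u_2)$ for all unit $u_1,u_2$ bounds the operator norm of $A$; the rest is the elementary observation that the $k$-smallest-eigenvalue-sum is monotone and concave as a function on symmetric forms, so a convex combination plus a small operator-norm error stays positive. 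Everything else is routine bookkeeping inherited from the proof of Theorem~\ref{David2002}.
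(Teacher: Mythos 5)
Your overall strategy matches the paper's: impose a round $g_0$ with matching $1$-jet at $x_0$ via the construction preceding Theorem~\ref{David2002}, then use Lemma~\ref{Wr-lemma} to keep $\Ric_k>0$ for small $\lambda$. Your explicit observation that the sum of the $k$ smallest eigenvalues, as a $\min$ over $k$-dimensional subspaces of a linear functional in the form, is concave and Lipschitz in operator norm is a useful fleshing-out of what the paper dismisses as an ``elementary consequence.''

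The genuine gap is in your treatment of $R$. You must pick a single, explicit formula for $R(g_1)$ that \emph{simultaneously} depends smoothly on $g_1$ and returns $\rho$ whenever $g_1$ is already round of radius $\rho$ near $x_0$. You offer several vague descriptions, one of which (``simply on the value of $g_1$ at $x_0$ together with the requirement that the metric be round'') is not tenable: the $0$-jet (or even the $1$-jet, in normal coordinates) of a metric at a point carries no curvature information, so a round metric of \emph{any} radius matches the $1$-jet at $x_0$. Moreover, your ``moreover'' clause handles the round case by redefining $g_0:=g_1$, which is a different construction rather than a verification that your formula outputs $\rho$ --- so it does not establish continuity, let alone smoothness, of $g_1\mapsto R(g_1)$ across the boundary of the locally round locus. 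The paper fixes this by choosing and fixing a basis $\{e_1,\dots,e_n\}$ of $T_{x_0}M$ once and for all, setting $u=e_1/|e_1|_{g_1}$, and defining $R(g_1)=\sqrt{(n-1)/\Ric_{g_1}(u,u)}$; this is manifestly smooth in $g_1$ and equals $\rho$ whenever $g_1$ is round of radius $\rho$. (Two smaller points: your ``$1$-Lipschitz'' should be $k$-Lipschitz, and ``$\lambda$ must be made large enough to overcome the constant $c$'' should read \emph{small} enough. You also pass over the role of the injectivity radius, which the paper uses to set $\epsilon_0=\min\{1,\pi R,\operatorname{inj}_{g_1}(x_0)\}$ so that both normal coordinate charts are actually defined on the ball in question.)
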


\begin{proof}
We begin by investigating the extent to which we can impose a round metric onto a neighbourhood of $x_0 \in M.$ We will do this by defining a local diffeomorphism between a neighbourhood of $x_0$ and a neighbourhood (say of the north pole) in a standard sphere, and then using this diffeomorphism to pull-back the round metric onto $M$.

We construct the diffeomorphism by introducing normal coordinate systems locally around $x_0$ in $M$ and around the north pole in the sphere. To do this in $M$, we consider the ordered basis $\{e_1,...,e_n\}$ for $T_{x_0}M$. Applying the Gram-Schmidt orthonormalization algorithm to this basis yields an ordered orthonormal basis $\{v_1,...,v_n\}$ in a way which depends smoothly on the metric $g_1$. This then generates a normal coordinate system in a neighbourhood of $x_0$.

To do the same thing on the sphere $S^n$, we begin by fixing once and for all an orthonormal basis $\{w_1,...,w_n\}$ for the tangent space at the north pole on the unit radius sphere. However we still have to fix the radius of the round metric we wish to use in our construction. Consider the $g_1$-unit vector $u:=e_1/|e_1|.$ We set the radius $R$ for the round sphere to be
\begin{equation*}
R=R(g_1)=\sqrt{\frac{n-1}{\Ric_{g_1}(u,u)}}.
\end{equation*}
It is clear that this depends smoothly on the metric, as claimed. This choice of $R$ also has the effect that should $g_1$ be round of radius $\rho$ near $x_0$, the metric $g_0$ will agree with this, i.e. we will have $R=\rho.$ Rescaling $\{w_1,...,w_n\}$ by a factor of $1/R$ then yields an orthonormal basis for the round metric of radius $R$. Finally, we use this basis to generate a normal coordinate system on the sphere.

The extent to which any normal coordinate system can be defined will be limited by the injectivity radius at the central point. In the case of a round sphere of radius $R$, the injectivity radius at any point is $\pi R$. For $g_1$ on $M$ we set
\begin{equation*}
\epsilon_0:=\min\{1, \pi R(g_1),\hbox{inj}_{g_1}(x_0)\}.
\end{equation*}
Notice that $\epsilon_0$ depends continuously on the metric.

Given this $\epsilon_0>0$, we can now construct a diffeomorphism between the $\epsilon_0$-balls about $x_0 \in M$ and about the north pole in $S^n$ simply by identifying points with the same normal coordinates. We then pull back the round metric of radius $R$ to $M$ along this diffeomorphism, and call the resulting locally defined metric $g_0.$ Notice that the 1-jets of $g_0$ and $g_1$ agree at $x_0$, by virtue of the nature of normal coordinate systems.

It is an elementary consequence of the Ricci curvature formula in Lemma \ref{Wr-lemma} that since $g_0$ is round and $g_1$ has $k$-positive Ricci curvature, then provided $\lambda$ is chosen small enough, the metric $\bar{g}$ will also have $k$-positive Ricci curvature. We therefore conclude that there exists $\lambda_0$ maximal in $(0,1]$ such that $\bar g$ has $k$-positive Ricci curvature for all $\lambda \in (0,\lambda_0).$ Set
\begin{equation*}
\lambda:=\min\Bigl\{\frac{\lambda_0}{4},\frac{1}{4}\epsilon_0^{2\lambda_0}\Bigr\},
\end{equation*}
so $\lambda$ varies continuously with the metric. We can then set
\begin{equation*}
\epsilon:=(2\sqrt \lambda)^{1/\lambda}, \text{ and } \epsilon':=(\sqrt \lambda)^{1/\lambda}.
\end{equation*}
Thus $\epsilon$ and $\epsilon'$ also depend continuously on the metric. Notice that by our choice of $\lambda$ we automatically have $\epsilon \le \epsilon_0.$ By the construction in \cite{Wr} outlined above, the metric $\bar{g}$ will then have all the desired properties: it will have $k$-positive Ricci curvature, be round of radius $R$ in an $\epsilon'$-ball around $x_0$, and will agree with the ambient metric $g_1$ towards the boundary of the $\epsilon$-ball.
\end{proof}

\begin{cor}
\label{cpt_family}
Given a compact family $K \subset \kRP(M),$ the association of a metric in $\rdk(M)$ as in Proposition \ref{kpos_rounding} can be achieved in such a way that the parameters $\epsilon,\epsilon'$ can be chosen uniformly for the entire family.
\end{cor}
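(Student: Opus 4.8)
The plan is to exploit the fact that every ingredient in the proof of Proposition~\ref{kpos_rounding} depends continuously on the metric, and that a continuous function on a compact set attains its extrema. The only genuine obstruction to uniformity is that the cut-off exponent $\lambda$, and hence $\epsilon=(2\sqrt{\lambda})^{1/\lambda}$ and $\epsilon'=(\sqrt{\lambda})^{1/\lambda}$, were allowed to vary with the metric; I would remove this by replacing $\lambda$ by a single constant small enough to work for every metric in $K$ at once, and then re-running the construction verbatim.

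In more detail, recall from the proof of Proposition~\ref{kpos_rounding} that to each $g_1\in\kRP(M)$ the construction assigns: a radius $R(g_1)>0$ and a locally defined round model metric $g_0=g_0(g_1)$, both depending smoothly on $g_1$; a constant $\epsilon_0(g_1)>0$ depending continuously on $g_1$; a maximal exponent $\lambda_0(g_1)\in(0,1]$ below which the interpolated metric $\bar g=\psi g_0+(1-\psi)g_1$ is of $k$-positive Ricci curvature; and the chosen exponent $\Lambda(g_1)=\min\{\lambda_0(g_1)/4,\,\frac14\epsilon_0(g_1)^{2\lambda_0(g_1)}\}$, which that proof records as depending continuously on $g_1$. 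Since the metric-dependence of the construction enters only through these data, $g_1\mapsto\bar g$ is continuous (and as smooth as $R$ and $g_0$). Now given a compact $K\subset\kRP(M)$, I would set $\lambda_K:=\min_{g_1\in K}\Lambda(g_1)>0$ and $\epsilon_{0,K}:=\min_{g_1\in K}\epsilon_0(g_1)>0$, then shrink $\lambda_K$ further if necessary so that in addition $(2\sqrt{\lambda_K})^{1/\lambda_K}\le\epsilon_{0,K}$, which is possible by \eqref{lim}. Re-running the construction of Proposition~\ref{kpos_rounding} for every $g_1\in K$ but with the \emph{fixed} exponent $\lambda=\lambda_K$ in place of $\Lambda(g_1)$, and putting $\epsilon:=(2\sqrt{\lambda_K})^{1/\lambda_K}$ and $\epsilon':=(\sqrt{\lambda_K})^{1/\lambda_K}$, I claim all conclusions of Proposition~\ref{kpos_rounding} persist with these now metric-independent $\epsilon,\epsilon'$: for each $g_1\in K$ one has $\lambda_K\le\Lambda(g_1)<\lambda_0(g_1)$, so $\bar g$ still has $k$-positive Ricci curvature; it is round of radius $R(g_1)$ on $B(x_0,\epsilon')$; and since $\epsilon\le\epsilon_{0,K}\le\epsilon_0(g_1)$ the interpolation again takes place inside the ball on which $g_0(g_1)$ is defined, so $\bar g$ coincides with $g_1$ off $B(x_0,\epsilon)$, and in particular $\bar g\in\rdk(M)$. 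One also checks $0<\epsilon'<\epsilon$ automatically (as $\epsilon=2^{1/\lambda_K}\epsilon'$), and that $g_1\mapsto\bar g$ remains continuous since we have only frozen a parameter.

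The one step that really uses the compactness hypothesis, and which I expect to be the main point to get right, is the existence of a uniform positive lower bound $\lambda_K$ for the admissible cut-off exponent over all of $K$. Above I extracted it from the continuity of $\Lambda$ asserted in the proof of Proposition~\ref{kpos_rounding}; if one prefers not to rely on that, it follows directly from Lemma~\ref{Wr-lemma}, whose error term satisfies $|A(u_1,u_2)|\le c\,\lambda^{1/2}|x|^\lambda$ with $c$ depending continuously on $g_1$ and on $g_0(g_1)$, hence bounded above on the compact family $K$; meanwhile the $k$-th intermediate Ricci curvatures of the metrics in $K$, and of the round model metrics $g_0(g_1)$ (whose radii $R(g_1)$ are bounded on $K$), are bounded below by a common positive constant, again by compactness. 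Combining these two bounds exactly as in the proof of Proposition~\ref{kpos_rounding}, but now with uniform constants, produces a $\lambda_K>0$ below which $\bar g$ is of $k$-positive Ricci curvature for every $g_1\in K$ simultaneously, which is all that is needed.
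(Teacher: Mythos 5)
Your proof is correct and follows essentially the same route as the paper's: take infima of the relevant constants ($\epsilon_0$ and the admissible cut-off exponent) over the compact family $K$, observe these are positive by compactness and continuity, and then re-run the construction of Proposition~\ref{kpos_rounding} with these now uniform parameters. Your alternative derivation of the uniform lower bound $\lambda_K$ directly from the error estimate in Lemma~\ref{Wr-lemma} is a nice addition, as it bypasses any reliance on continuity of the maximal exponent $\lambda_0(g)$, but it does not change the essential argument.
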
 

\begin{proof}
Begin by replacing the definition of $\epsilon_0$ in the proof of Proposition \ref{kpos_rounding} with the value
\begin{equation*}
\epsilon_0:=\min\{1,\inf_{g \in K} \pi R(g),\inf_{g \in K} \hbox{inj}(g)\},
\end{equation*}  Then define
\begin{equation*}
\lambda_0:=\inf_{g \in K} \lambda_0(g),
\end{equation*}
where $\lambda_0(g)$ is the value of $\lambda_0$ (as defined in the proof of Proposition \ref{kpos_rounding}) for the specific metric $g$. Note that by the compactness of $K$, all the infima above are positive.
Now define $\lambda$, and in turn $\epsilon,\epsilon'$ in terms of the above $\epsilon_0,$ $\lambda_0,$ as in the proof of the Proposition.
\end{proof}

We now consider the effect of moving the basepoint $x_0$ along a smooth path in $M$:
\begin{lemma}
\label{basepoint}
Given a $k$-positive Ricci metric $g$ on $M$ and a smooth path $\gamma(t)$ in $M$, $t \in [0,1]$, there are parameters $0<\epsilon'<\epsilon<1,$ a smooth function $R:[0,1]\to \mathbb{R}^+$, and a smooth path of metrics $g(t) \in \kRP(M)$, all dependent on $g$ and $\gamma$, with the following property. For each $t \in [0,1]$ the metric $g(t)$ is round of radius $R(t)$ in an $\epsilon'$-neighbourhood of the point $\gamma(t)$, and agrees with $g$ in the complement of the corresponding $\epsilon$-ball. (Here, the ball radius is measured with respect to $g$.)
\end{lemma}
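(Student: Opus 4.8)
The plan is to run the construction behind Proposition \ref{kpos_rounding} with the moving basepoint $\gamma(t)$ in place of the fixed point $x_0$, and to verify that every ingredient of that construction can be made to depend smoothly (or at worst continuously) on $t$, after which compactness of $[0,1]$ yields the uniform parameters $\epsilon,\epsilon'$. Throughout, the fixed metric $g$ plays the role of "$g_1$", so the only new source of variation is the centre of the rounding.

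First I would fix once and for all a smooth orthonormal frame $\{v_1(t),\dots,v_n(t)\}$ along $\gamma$. Such a frame exists because the pullback bundle $\gamma^{*}TM$ over the interval is trivial: take any smooth frame of $T_{\gamma(0)}M$, parallel transport it along $\gamma$, and apply Gram--Schmidt with respect to $g$. This replaces both the fixed basis $\{e_1,\dots,e_n\}$ and the Gram--Schmidt step in the proof of Proposition \ref{kpos_rounding}. Using $v_1(t)/|v_1(t)|_g$ in place of $u$, define $R(t)$ by the same formula as in that proof; since $g$ is fixed and $\gamma$ is smooth, $R:[0,1]\to\mathbb{R}^{+}$ is smooth. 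The normal coordinate system centred at $\gamma(t)$ built from $\{v_1(t),\dots,v_n(t)\}$, identified with the normal coordinate system about the north pole of the round sphere of radius $R(t)$, then produces a metric $g_0(t)$ on a neighbourhood of $\gamma(t)$ whose $1$-jet agrees with that of $g$ at $\gamma(t)$, and $g_0(t)$ depends smoothly on $t$.

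Next I would control the remaining parameters uniformly, exactly as in Corollary \ref{cpt_family} but with the compact interval in place of the compact family $K$. Put $\epsilon_0(t):=\min\{1,\pi R(t),\mathrm{inj}_g(\gamma(t))\}$; this is continuous in $t$ (the injectivity radius of the fixed metric $g$ is continuous on $M$), so $\epsilon_0:=\inf_{t}\epsilon_0(t)>0$. For each $t$ and small $\lambda$, form the interpolated metric $\bar g_{t,\lambda}$ by the $\psi$-recipe with $|x|$ the $g$-distance from $\gamma(t)$; the estimate of Lemma \ref{Wr-lemma} applies with a constant $c$ that may be taken independent of $t$, since $g$ is fixed and the family $\{g_0(t)\}$ varies continuously over a compact interval. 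Hence $\bar g_{t,\lambda}$ has $k$-positive Ricci curvature for all $\lambda<\lambda_0(t)$, and $\lambda_0:=\inf_{t}\lambda_0(t)>0$ by the same compactness. Choosing $\lambda:=\min\{\lambda_0/4,\tfrac14\epsilon_0^{2\lambda_0}\}$ and then $\epsilon:=(2\sqrt{\lambda})^{1/\lambda}$, $\epsilon':=(\sqrt{\lambda})^{1/\lambda}$ as before gives $0<\epsilon'<\epsilon\le\epsilon_0<1$, and I would set $g(t):=\bar g_{t,\lambda}$.

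Finally I would check that $t\mapsto g(t)$ is a smooth path in $\kRP(M)$ with the stated properties. Roundness of radius $R(t)$ on $B(\gamma(t),\epsilon')$, agreement with $g$ outside $B(\gamma(t),\epsilon)$, and $k$-positivity of the Ricci curvature are immediate from the construction and the choice $\lambda<\lambda_0(t)$. The only delicate point for smoothness in $t$ is that $x\mapsto d_g(\gamma(t),x)$ is not smooth at $x=\gamma(t)$; but there $|x|^{\lambda}/\sqrt{\lambda}<1$, so $\psi\equiv 1$ and $g(t)=g_0(t)$ on a neighbourhood of $\gamma(t)$, which is smooth in $t$, whereas on the annular region $0<|x|<\epsilon_0$ the distance function, hence $\psi$ and $\bar g_{t,\lambda}$, is smooth jointly in $(t,x)$. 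I expect the main obstacle to be precisely these uniformity claims --- above all that $\inf_{t}\lambda_0(t)>0$ --- which rest on compactness of $[0,1]$ together with the continuous dependence of the data on $t$, i.e. the moving-basepoint analogue of Corollary \ref{cpt_family}.
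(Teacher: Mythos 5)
Your proof is correct and follows essentially the same route as the paper: a smooth vector field along $\gamma$ obtained by parallel transport determines the smooth function $R(t)$, and compactness of the interval supplies uniform $\epsilon,\epsilon'$ via the argument of Corollary \ref{cpt_family}. The only cosmetic difference is that the paper parallel-transports the unit tangent $\gamma'(0)/|\gamma'(0)|$ to define $R(t)$ while you parallel-transport an arbitrary full frame (which is anyway needed implicitly for the normal-coordinate identification); both yield the required smooth unit vector field.
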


\begin{proof} 
Given the $g$-unit vector $u:=\gamma'(0)/|\gamma'(0)|$ at $\gamma(0) \in M,$ consider the vector field $u(t)$ along $\gamma$ created by parallel translating $u.$ Using this in the rounding construction of Proposition \ref{kpos_rounding} we then obtain the function
\begin{equation*}
R(t)=\sqrt{\frac{n-1}{\Ric_{g}(u(t),u(t))}},
\end{equation*}
which clearly varies smoothly with $t$. The fact that the image of $\gamma$ is a compact subset of $M$ allows us to choose $\epsilon,\epsilon'$ uniformly for the whole path, as in Corollary \ref{cpt_family}. Then the rounding process about each $\gamma(t)$ with these parameters and $R(t)$ as above results in the desired smooth path of $k$-positive Ricci metrics.
\end{proof}  

Our next target is to show the inclusions: $ \rdok(M)\subset\rdk(M)\subset \kRP(M),$ are weak homotopy equivalences for any $1 \le k \le n.$ We do this by observing the respective relative homotopy groups all vanish. We begin with
 
\begin{proposition}
\label{prop:rounding}
Let $x_0 \in M$ represent a basepoint and let $K \subset \kRP(M)$ denote a compact family of metrics.  Then there is a homotopy of maps, $\eta_{s}: K\rightarrow \kRP(M)$, $s\in [0,1]$, which satisfies the following conditions:
\begin{enumerate}
\item[(i)] The map $\eta_{0}$ is the original inclusion map.
\item[(ii)] The image of the map $\eta=\eta_1$  lies in $\rdk(M)$.
\item[(iii)] For all $s\in [0,1]$, $\eta_{s}(K\cap \rdk(M))\subset \rdk(M)$.
\item[(iv)] For all $s\in [0,1]$, $\eta_{s}$ restricts as the identity map on $K\cap \rdok(M)$.
\end{enumerate}
\end{proposition}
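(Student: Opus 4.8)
The plan is to construct the homotopy $\eta_s$ by interpolating, for each metric $g \in K$, between $g$ itself and the rounded metric $\bar{g}$ produced by Proposition \ref{kpos_rounding}. The natural candidate is a ``partial rounding'': recall that the rounding construction replaces $g_1$ by $\bar g = \psi g_0 + (1-\psi)g_1$ on $B_\epsilon(x_0)$, where $\psi$ depends on a parameter $\lambda$. One sets the final $\lambda$ to be $\min\{\lambda_0/4, \tfrac14\epsilon_0^{2\lambda_0}\}$; the idea for the homotopy is to run $\lambda$ from its ``do nothing'' value (where $\epsilon,\epsilon' \to 0$ and $\bar g = g_1$ away from $x_0$, so effectively no change has been made) to this final small value, keeping $k$-positive Ricci curvature throughout since $\lambda$ stays below $\lambda_0$. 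Concretely, I would set $\lambda(s) = (1-s)\lambda_0/2 + s\,\lambda$ or some such path staying in $(0,\lambda_0)$; then $\eta_s(g)$ is the metric obtained by the Proposition \ref{kpos_rounding} construction run with parameter $\lambda(s)$ in place of the final $\lambda$. By Lemma \ref{Wr-lemma} this is $k$-positive Ricci for every $s$, it varies continuously in $(g,s)$ by the continuous dependence already established, and at $s=0$ the transition region $(2\sqrt{\lambda(0)})^{1/\lambda(0)}$ need not vanish --- so I actually want the opposite endpoint convention.

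Let me reorganize. The cleaner approach: use Corollary \ref{cpt_family} to fix \emph{uniform} parameters $\epsilon,\epsilon',\lambda$ for the whole compact family $K$, and also fix a uniform $\lambda_0$ with $\bar g$ being $Ric_k>0$ for all $\lambda \in (0,\lambda_0)$ and all $g \in K$. Now define $\eta_s(g)$ by the same formula $\psi(x)g_0(x) + (1-\psi(x))g_1(x)$ but with cutoff parameter $\lambda$ replaced by a function $\lambda(s) \in (0,\lambda_0)$ chosen so that at $s=0$ the modified region is empty --- i.e. we want $\eta_0(g) = g$. The way to achieve this is to shrink the \emph{support} of the deformation: replace $\psi$ by $\psi_s(x) = f(|x|^\lambda/(s\sqrt\lambda))$ (reading $\psi_0 \equiv 0$), so that for small $s$ the round region $B((s\sqrt\lambda)^{1/\lambda},x_0)$ is tiny, and $\eta_0(g)=g$ exactly. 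Since the $A$-term estimate in Lemma \ref{Wr-lemma} only improves as the support shrinks (it is bounded by $c\lambda^{1/2}|x|^\lambda$, and $|x|$ is now even smaller), $k$-positivity is preserved for all $s \in [0,1]$ once it holds at $s=1$. This gives (i) and (ii) directly; for the final time $s=1$ we are running exactly the Proposition \ref{kpos_rounding} construction, so $\eta_1(K) \subset \rdk(M)$.

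For (iii): if $g \in K \cap \rdk(M)$ is already round in a neighbourhood of $x_0$, then by the ``Moreover'' clause of Proposition \ref{kpos_rounding} we have $R(g) = \rho$, the existing radius, so $g_0 = g_1$ on the relevant small ball; hence $\eta_s(g) = g$ for all $s$ and in particular stays in $\rdk(M)$. Actually one must be slightly careful: $g$ round near $x_0$ means round on \emph{some} neighbourhood, possibly smaller than the transition region; but since $g_0$ agrees with $g_1$ wherever $g_1$ is round and the normal-coordinate pullback is used, $\bar g$ agrees with $g_1$ near $x_0$ and is still round there --- one just checks the round region of $\eta_s(g)$ contains a neighbourhood of $x_0$. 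For (iv): if additionally $g \in \rdok(M)$, then $\eta_s(g) = g$ as in (iii), so $\eta_s$ restricts to the identity on $K \cap \rdok(M)$. The main obstacle I anticipate is purely the bookkeeping in (iii): verifying that the deformation genuinely does nothing on metrics already in $\rdk(M)$ requires checking that the radius-selection rule $R(g_1) = \sqrt{(n-1)/\Ric_{g_1}(u,u)}$ does return the ambient radius $\rho$ (it does, since for a round metric of radius $\rho$ one has $\Ric = (n-1)/\rho^2$), and that the normal-coordinate pullback of the round metric genuinely coincides with $g_1$ and not merely agrees to first order --- this is where one uses that a round metric in normal coordinates is literally the pulled-back round metric, not just first-order close. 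Everything else is a continuous-dependence check that has already been carried out in the proofs of Proposition \ref{kpos_rounding} and Corollary \ref{cpt_family}.
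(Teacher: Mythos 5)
Your proposal takes a genuinely different route from the paper's, and it contains a gap at its central step. The paper defines $\eta_s$ by replacing the cutoff $f$ with $s\cdot f$ (scaling the \emph{amplitude} of the cutoff, keeping its support fixed); you instead rescale the \emph{argument} of $f$, replacing $\psi$ with $\psi_s(x) = f\bigl(|x|^\lambda/(s\sqrt\lambda)\bigr)$, which shrinks the support to a point as $s\to 0^+$. These are quite different deformations.

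The gap is in the sentence claiming that ``the $A$-term estimate in Lemma~\ref{Wr-lemma} only improves as the support shrinks.'' The constant $c$ in the estimate $|A(u_1,u_2)| \le c\,\lambda^{1/2}|x|^\lambda$ is stated to depend on the choice of $f$. Your effective cutoff is $f_s(r) := f(r/s)$, for which $|f_s'| = |f'|/s$ and $|f_s''| = |f''|/s^2$, both of which blow up as $s\to 0$. So the constant for the modified cutoff is $c_s$, not $c$, and the naive bound $c_s\,\lambda^{1/2}|x|^\lambda$ does not obviously tend to zero --- if the dominant contribution to $c_s$ comes from $|f_s''|\sim 1/s^2$, the bound is of order $\lambda/s$, which diverges. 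Your conclusion is in fact salvageable: a closer look at the proof of \cite[Lemma 1.9]{Wr} shows the second-derivative terms in $A$ are multiplied by $|g_0-g_1|=O(|x|^2)$ (two extra factors of $|x|$, not one), and with $|x|^\lambda \sim s\sqrt{\lambda}$ on the shrunken transition annulus the $1/s^2$ is cancelled. But this requires reopening the proof of the cited lemma and tracking the $s$-dependence term by term; it does not follow from the statement of Lemma~\ref{Wr-lemma} as written. The paper's homotopy $f \mapsto s f$ sidesteps this entirely: since $|sf| \le |f|$, $|(sf)'| \le |f'|$, $|(sf)''| \le |f''|$ for $s\in[0,1]$, the constant in the lemma is uniform in $s$, and Lemma~\ref{Wr-lemma} applies verbatim for every $s$; the only additional point the paper needs is that compactness of $K\times[0,1]$ allows uniform choices of $\lambda,\epsilon,\epsilon'$.

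A minor secondary issue: for (iii) you initially assert $\eta_s(g)=g$ for $g \in K\cap\rdk(M)$, which is false in general (the round neighbourhood of $g$ may be strictly smaller than the transition ball, so the deformation is not trivial). You catch this and wave at a fix, but the clean observation under your construction is simpler: for every $s>0$ the metric $\eta_s(g)$ equals $g_0$, hence is round, on the ball $|x|<(s\sqrt{\lambda})^{1/\lambda}$, so $\eta_s(g)\in\rdk(M)$ automatically; at $s=0$ it equals $g$. The paper's proof of (iii)--(iv) relies instead on the observation that when $g$ is round near $x_0$ with the matching radius $R=\rho$, one has $g_0=g$ on the entire round region, and for $g\in\rdok(M)$ the whole $\epsilon$-ball lies inside that region, so $\eta_s(g)=g$ exactly.
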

\begin{proof}
We compose the inclusion map with the rounding map resulting from the construction in Proposition \ref{kpos_rounding}, after observing that by Corollary \ref{cpt_family} we can make uniform choices of $\epsilon,\epsilon'$ for all metrics in $K$. With these choices fixed, we obtain the desired smooth map:
\begin{equation*}
\eta: K \to
\rdk(M),
\end{equation*}
which immediately satisfies property (ii) above.

Before establishing the required homotopy $\eta_{s}, s\in [0,1]$, we will demonstrate that $\eta$ satisfies property (iv). Recall that if the metric $g_1$ in Proposition \ref{kpos_rounding} is round in a neighbourhood of $x_0$ with curvature $1/R^2$, then the same will be true for the rounded metric $\bar{g}.$  Moreover, if the starting metric is round throughout the whole $\epsilon$-ball, then the metric deformation procedure will have precisely no effect. For any metric in $K \cap \rdok(M)$, the $\epsilon$-ball in which the metric deformation takes place lies within the distance sphere about $x_0$ with intrinsic radius 1. (The intrinsic radius of an $\epsilon$-distance sphere in a sphere of radius $R$ is $R\sin(\epsilon/R),$ and it is easily checked that this is strictly less than one since $0<\epsilon \le 1.$) Thus, the map $\eta$ satisfies property (iv).

The homotopy $\eta_s$ for $s \in I$ is provided by the rounding construction in Proposition \ref{kpos_rounding} where the cut-off function $f(r)$ (defined after Theorem \ref{David2002}) is replaced by $s\cdot f(r).$ It is easy to see that the resulting metrics vary smoothly with $s$, and interpolate between the inclusion map and $\eta$. Since the choice of cut-off function influences all subsequent choices, it is possible that a given choice of $\lambda,$ $\epsilon$ and $\epsilon'$ which work in the case $s=1$ might not be suitable for other values of $s$. However, since the values of $s$ belong to a compact interval, it is clear that we can make uniform choices for $\lambda,\epsilon,\epsilon'$ which work for all metrics in $K$ and all $s \in [0,1].$ By the rounding comments in the paragraph above, we immediately see that conditions (iii) and (iv) are satisfied.
\end{proof}

\begin{theorem}
\label{weak-homotopy-rd}
If $M^n$ supports a metric of $k$-positive Ricci curvature for some $1 \le k \le n,$ then the inclusion maps $$\rdok(M)\hookrightarrow\rdk(M)\hookrightarrow \kRP(M)$$ are weak homotopy equivalences. In particular, if $M$ admits a Ricci positive metric then we have weak homotopy equivalences $\rdo(M) \hookrightarrow\rd(M)\hookrightarrow \RP(M).$
\end{theorem}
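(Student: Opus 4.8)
The plan is to prove each of the two inclusions is a weak homotopy equivalence by the standard compression criterion: an inclusion $i\colon A\hookrightarrow X$ is a weak homotopy equivalence provided that, for every $j\ge 0$, every map of pairs $\alpha\colon(D^j,S^{j-1})\to(X,A)$ is homotopic \emph{through maps of pairs} to a map whose image lies in $A$. Such a pair-homotopy moves the basepoint only within $A$, so by the usual change-of-basepoint isomorphism for relative homotopy groups it forces $\pi_j(X,A,a)=0$ for every $a\in A$ and $j\ge 1$, and (from the cases $j=0,1$) makes $\pi_0 A\to\pi_0 X$ a bijection; the long exact sequence of the pair then yields the weak equivalence. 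Once both $\rdok(M)\hookrightarrow\rdk(M)$ and $\rdk(M)\hookrightarrow\kRP(M)$ are known to be weak equivalences, the composite is one too, and the assertion about $\RP(M),\rd(M),\rdo(M)$ is just the case $k=1$.

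For $\rdk(M)\hookrightarrow\kRP(M)$: given $\alpha\colon(D^j,S^{j-1})\to(\kRP(M),\rdk(M))$, put $K=\alpha(D^j)$, a compact subset of $\kRP(M)$, and apply Proposition \ref{prop:rounding} to obtain the homotopy $\eta_s\colon K\to\kRP(M)$. Then $(x,s)\mapsto\eta_s(\alpha(x))$ is a homotopy from $\alpha$ to $\eta_1\circ\alpha$; by property (ii) the latter has image in $\rdk(M)$, and since $\alpha(S^{j-1})\subset K\cap\rdk(M)$, property (iii) guarantees that $S^{j-1}$ is carried into $\rdk(M)$ at every stage. Thus $\alpha$ is homotopic through maps of pairs into $\rdk(M)$, which is all that is required.

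For $\rdok(M)\hookrightarrow\rdk(M)$ the extra ingredient is a global rescaling. Given $\alpha\colon(D^j,S^{j-1})\to(\rdk(M),\rdok(M))$, first apply Proposition \ref{prop:rounding} to $K=\alpha(D^j)$ again: by property (iv) the homotopy $\eta_s\circ\alpha$ is stationary on $\alpha(S^{j-1})\subset\rdok(M)$, and by (ii)--(iii) it is a homotopy of pairs replacing $\alpha$ by $\alpha_1:=\eta_1\circ\alpha$ whose image consists of metrics that are round on a \emph{common} ball $B(x_0,\epsilon')$, with $\epsilon'>0$ independent of the metric (Corollary \ref{cpt_family}), of a radius that depends continuously on the metric. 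Since $\alpha_1(D^j)$ is compact these radii are bounded below, so one can fix a single constant $c\ge 1$, sufficiently large, such that multiplying any metric in $\alpha_1(D^j)$ by $c$ produces a metric which is round of radius $\ge 1$ on a ball containing a distance sphere about $x_0$ of intrinsic radius $1$, i.e.\ a metric in $\rdok(M)$. Scaling by a positive constant preserves $\Rk>0$ and roundness, and an elementary trigonometric check shows that scaling a metric in $\rdok(M)$ by a factor $\ge 1$ keeps it in $\rdok(M)$; hence $(x,s)\mapsto(1+s(c-1))\cdot\alpha_1(x)$ is a homotopy of pairs from $\alpha_1$ into $\rdok(M)$. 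Concatenating with the previous homotopy completes the second inclusion, and hence the theorem.

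The only point that demands real care is the uniformity underlying the rescaling. The size of the round region of an arbitrary metric in $\rdk(M)$ is not controlled under small perturbations — a nearby metric may be round only on a far smaller ball — so one cannot rescale the family $\alpha(D^j)$ directly and choose the blow-up factor continuously. Routing first through Proposition \ref{prop:rounding} is precisely what repairs this: it replaces the family by a compact family of metrics all round on the \emph{same} ball $B(x_0,\epsilon')$ and with a continuously (hence, on a compact set, uniformly) bounded radius, which is exactly what makes a single admissible choice of $c$ possible. Everything else — the compression criterion, and the behaviour of $\Rk>0$, of roundness, and of the intrinsic-radius-$1$ condition under scaling — is routine.
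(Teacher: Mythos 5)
Your proof is correct and follows the paper's overall strategy (kill the two relative homotopy groups, using Proposition~\ref{prop:rounding} for the first inclusion and a global rescaling for the second), but there is one genuine and worthwhile divergence. For the inclusion $\rdok(M)\hookrightarrow\rdk(M)$, the paper applies the scaling homotopy $H(\beta)$ directly to $\alpha$ and asserts, on compactness grounds alone, that some finite $\beta$ pushes the image into $\rdok(M)$. You instead precede the rescaling by another pass through Proposition~\ref{prop:rounding} (with Corollary~\ref{cpt_family}), so that the family is first replaced by one whose members are all round on a \emph{common} ball $B(x_0,\epsilon')$ with a continuously varying roundness radius $R(g)$, and only then rescaled. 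Your stated reason for doing this is sound: the quantity ``maximal intrinsic radius of a distance sphere contained in the round region'' is not a continuous (it is only upper semicontinuous) function on $\rdk(M)$ in the smooth topology, since a $C^\infty$-small perturbation of a metric that is round on a large ball can be round only on a tiny one. Hence compactness of $\alpha(D^i)$ does not by itself force a positive lower bound, and a uniform $\beta$ is not obviously available. Routing through the rounding proposition repairs this: on the post-rounding family the relevant quantity is bounded below because $\epsilon'$ is uniform and $R$ is continuous on a compact set. So your variant fills in a step that the paper leaves terse (and that a careful reader would want justified), at the modest cost of one extra invocation of the rounding homotopy. The remaining presentational difference — you invoke the compression criterion together with the change-of-basepoint isomorphism, whereas the paper explicitly converts the scaling homotopy into a based homotopy along the canonical scaling path — is immaterial; the two are standard equivalent bookkeeping devices.

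One small wording quibble: you write that after rescaling the metric is ``round of radius $\ge 1$''; what is actually wanted (and what your argument delivers) is that the round region contains the closed disc bounded by a distance sphere about $x_0$ of intrinsic radius exactly $1$, with concave boundary. The elementary monotonicity check you allude to — that $x\mapsto x\arcsin(1/x)$ is decreasing on $[1,\infty)$, so enlarging the metric by a constant $c\ge 1$ moves the intrinsic-radius-$1$ distance sphere inward relative to the (scaled) round region — is exactly right and worth recording if this were to be written up.
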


\begin{proof}

The statement of the Theorem is equivalent to the statement that the following homotopy groups vanish for all $i$:
\begin{equation*}
\pi_i(\kRP(M), \rdk(M))
 \text{ and } \pi_i(\rdk(M), \rdok(M)).
\end{equation*}
Let us consider the first family of homotopy groups. We set any metric $\tilde{g}\in \rdok(M)\subset \rdk(M)$ as basepoint. For any $i$, an element of this group takes the form of a continuous map
\begin{equation*}
  \alpha:D^i \to \kRP(M), \ \ \ \mbox{such that}
  \ \ \ \alpha|_{\partial D^i}:S^{i-1} \to \rdk(M).
\end{equation*}
We let some $p \in S^{i-1}$ lying the pre-image of $\tilde{g}$ act as  our basepoint for $D^i$.

The map $\alpha$ determines a compact family of $k$-positive Ricci metrics. Setting $K=\hbox{Im }\alpha$ in Proposition
\ref{prop:rounding}, we see that $\alpha$ is homotopic via the composition $\eta_s \circ \alpha$ to a map 
$$\bar{\alpha}:=\eta \circ \alpha:D^i \to \rdk(M).$$ Since each map in the homotopy fixes elements of $K\cap \rdok(M)$, this is a homotopy through based maps. Moreover, by property (iii) of Proposition
\ref{prop:rounding}, we know that at each stage in the homotopy, elements of $K\cap \rdk(M)$ are mapped into $\rdk(M)$. Thus, we have a null-homotopy and $\alpha$ represents the the zero class in $\pi_i(\kRP(M), \rdk(M))$.

We now consider the second set of homotopy groups. Setting $\tilde{g}$ as before we now let $\alpha$ denote
a continuous map
\begin{equation*}
  \alpha:D^i \to \rdk(M), \ \ \ \mbox{such that}
  \ \ \ \alpha|_{\partial D^i}:S^{i-1} \to \rdok(M).
\end{equation*}
Once again $p\in S^{i-1}$ in the pre-image of $\tilde{g}$ denotes the basepoint for $D^{i}$.
In order to establish that $\alpha$ is null-homotopic, we will apply
a progressive global scaling factor to the space $\rdk(M)$. This takes the form of a homotopy
\begin{equation*}
H(\beta):\rdk(M) \times [0,1] \to \rdk(M)
\end{equation*}
given by $H(\beta)(g,t)=(1+t\beta)^2g$ for some constant $\beta>0$. Clearly, for each $t$, $H(\beta)(-,t)$ maps $\rdok(M)$ to itself. We then precompose this with $\alpha$ to obtain a homotopy $H(\beta)\circ\alpha$. Since the image of ${\alpha}$ is compact, there is some value of $\beta$ for which the image of the composition $H(\beta)(-,1)\circ \alpha$ is a subset of $\rdok(M)$ as required. 
The problem with this approach, however, is that $H$ moves the basepoint metric. Nevertheless, $H$ provides a canonical path of metrics between the original basepoint $\tilde{g}$ and the shifted basepoint $(1+\beta)^2\tilde{g},$ namely $(1+t\beta)^2\tilde{g}$ for $t \in [0,1].$ Using this path we can replace $H$ by a based homotopy in the usual manner (see for example \cite[page 345]{Ha}). In this way we obtain a based homotopy between $\alpha$ and a map into $\rdok(M)$, showing that $\alpha$ is null-homotopic. As $\alpha$ is an arbitrary element in this homotopy group, we see that the homotopy group itself must vanish. The Proposition follows.
\end{proof}

Let $\bar{M}$ denote $M^n \setminus D^n$, and by $\bdy$ we will denote the space of $k$-positive Ricci metrics on $\bar{M}$ such that the boundary is concave, the boundary metric is round with (intrinsic) radius 1, and a neighbourhood of the boundary is round of radius greater than 1. Given any metric in $\rdok(M)$, removing the interior of the round disc about $x_0$ which is bounded by the sphere with intrinsic radius 1 results in an element of $\bdy$. The following result is then trivial, with
the maps being provided by removing, respectively gluing in round discs of the appropriate curvature:
\begin{lemma}
\label{homeo}
There is a homeomorphism $\rdok(M)
\cong \bdy.$  
\end{lemma}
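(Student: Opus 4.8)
The plan is to exhibit the homeomorphism concretely as a mutually inverse pair of continuous maps, $\Phi:\rdok(M)\to\bdy$ ("remove the round cap") and $\Psi:\bdy\to\rdok(M)$ ("glue in a round cap"), and to check that anchoring all constructions to the fixed basepoint $x_0$ and the fixed frame $\{e_1,\dots,e_n\}$ at $x_0$ removes every choice, so that the two maps are genuinely well defined and genuinely inverse to one another.

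\textbf{The removal map.} Given $g\in\rdok(M)$, the round neighbourhood of $x_0$ contains the distance sphere $\Sigma_g$ about $x_0$ of intrinsic radius $1$. Since on a round $n$-sphere of radius $R$ the distance sphere at geodesic radius $d$ has intrinsic radius $R\sin(d/R)$, which is strictly decreasing in $d$ for $d/R\in(\pi/2,\pi)$, the sphere $\Sigma_g$ and the geodesic radius $d_g$ at which it occurs are uniquely pinned down once one imposes the concavity convention recorded after the definition of $\rdok$; that convention forces $d_g/R>\pi/2$, whence $R=R(g)>1$. Using exponential coordinates at $x_0$ relative to the Gram--Schmidt orthonormalisation of $\{e_1,\dots,e_n\}$ (which depends smoothly on $g$, exactly as in the proof of Proposition~\ref{kpos_rounding}), identify $M$ minus the open ball bounded by $\Sigma_g$ with $\bar M=M\setminus D^n$, and let $\Phi(g)$ be the restriction of $g$ to this region. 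Then $\Phi(g)$ has $\Rk>0$, its boundary is concave with induced metric round of intrinsic radius $1$, and a collar of the boundary is round of radius $R(g)>1$, so $\Phi(g)\in\bdy$. Continuity in the smooth topology is clear, since $\Sigma_g$ and hence the excised region vary continuously with $g$ and restriction of metrics is continuous.

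\textbf{The gluing map.} Given $h\in\bdy$, the round collar of $\partial\bar M$ has constant sectional curvature $1/\rho(h)^2$ with $\rho(h)>1$ uniquely determined by $h$, and $\partial\bar M$ sits in the round $\rho(h)$-sphere as the distance sphere at the unique geodesic radius $d\in(\pi\rho(h)/2,\pi\rho(h))$ with $\rho(h)\sin(d/\rho(h))=1$. Define $\Psi(h)$ by gluing to $(\bar M,h)$ the closed geodesic ball of radius $d$ in the round $\rho(h)$-sphere, attached along $\partial\bar M$ so that the standard frame at the cap's centre matches the fixed frame at $x_0$ via exponential coordinates. Near the seam the glued metric is isometric to an open subset of the round $\rho(h)$-sphere, hence smooth; it is round of radius $\rho(h)$ near $x_0$, has $\Rk>0$, and contains $\partial\bar M$ as a distance sphere of intrinsic radius $1$, so $\Psi(h)\in\rdok(M)$. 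Since $\rho(h)$ and hence the attached cap vary continuously with $h$, $\Psi$ is continuous. Finally, $\Psi\circ\Phi=\mathrm{id}$: the collar left by $\Phi$ is round of radius $R(g)$, so $\rho(\Phi(g))=R(g)$ and $\Psi$ reattaches the radius-$R(g)$ cap of geodesic depth $d_g$, which by uniqueness is exactly the ball removed, the frame-matching convention recovering $g$ on the nose; and $\Phi\circ\Psi=\mathrm{id}$, since in $\Psi(h)$ the sphere $\partial\bar M$ is precisely the distance sphere about $x_0$ of intrinsic radius $1$, so $\Phi$ excises exactly the cap that $\Psi$ attached.

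The single point that requires genuine care---and the reason the statement is not a literal tautology---is the elimination of choices: both the identification of $M$ minus the excised cap with $\bar M$ and the way a cap is re-attached to $\partial\bar M$ carry an a priori rotational ambiguity, and one must verify that tying everything to $x_0$ and $\{e_1,\dots,e_n\}$ through exponential coordinates (as in Proposition~\ref{kpos_rounding}) rigidifies these so that $\Phi$, $\Psi$ are well defined and satisfy $\Psi\circ\Phi=\mathrm{id}$ and $\Phi\circ\Psi=\mathrm{id}$ exactly. Smoothness of the glued metric at the seam is immediate from the collar being truly round of the matching radius, and the remaining verifications (concavity, boundary radius $1$, collar radius $>1$, preservation of $\Rk>0$, and continuity in the smooth topology) are routine.
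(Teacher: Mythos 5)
Your proof is correct and takes essentially the same approach as the paper, which states the result as trivial with the mutually inverse maps given by removing, respectively gluing in, round discs of the appropriate radius. The extra care you take to pin down the rotational/identification ambiguities via the fixed basepoint and frame is a reasonable elaboration of what the paper leaves implicit.
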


\begin{cor}
\label{homotopy_equivalence}
If $M^n$ supports a metric of $k$-positive Ricci curvature for some $1 \le k \le n,$ then there are homotopy equivalences:
$$\kRP(M) \simeq \rdk(M)\simeq\rdok(M) \simeq \bdy.$$
 In particular, if $M$ admits a Ricci positive metric then the spaces $\RP(M)$, $ \rdo(M)$,  $\rdo(M)$ and ${\mathcal R}^{Ric>0}(\bar{M})^*$ are all homotopy equivalent.
\end{cor}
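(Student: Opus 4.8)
The plan is to obtain Corollary \ref{homotopy_equivalence} as a formal consequence of Theorem \ref{weak-homotopy-rd} and Lemma \ref{homeo}, the only genuine work being to upgrade the weak homotopy equivalences of Theorem \ref{weak-homotopy-rd} to honest homotopy equivalences. First I would assemble the inputs: Theorem \ref{weak-homotopy-rd} provides weak homotopy equivalences
$$\rdok(M)\hookrightarrow \rdk(M)\hookrightarrow \kRP(M),$$
while Lemma \ref{homeo} provides a homeomorphism $\rdok(M)\cong\bdy$, which is a fortiori a homotopy equivalence. Composing these, it remains only to promote the two inclusions above to genuine homotopy equivalences, and then the displayed chain follows.

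For this I would appeal to Whitehead's theorem in the form: a weak homotopy equivalence between spaces each having the homotopy type of a CW complex is a homotopy equivalence. So it suffices to check that $\kRP(M)$, $\rdk(M)$ and $\rdok(M)$ all have CW homotopy type (the homeomorphic space $\bdy$ then inherits this). For $\kRP(M)$ this is standard: it is an open subset of the Fréchet space of smooth symmetric $2$-tensor fields on $M$ in the $C^\infty$ topology, hence is metrizable and locally contractible, therefore an ANR, and by Milnor's theorem every ANR has the homotopy type of a CW complex. For the subspaces $\rdk(M)$ and $\rdok(M)$, cut out by the ``round near $x_0$'' conditions, I would argue in the same spirit: they are again metrizable, and local contractibility can be extracted from the local convex structure of the space of metrics together with the deformation machinery of Section 2 applied on small balls, so these too are ANRs and hence of CW homotopy type. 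One could alternatively short-circuit this by citing Palais' general results on the homotopy type of spaces of sections. With all spaces of CW homotopy type, Whitehead's theorem converts the weak equivalences into homotopy equivalences, establishing $\kRP(M)\simeq\rdk(M)\simeq\rdok(M)\simeq\bdy$.

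Finally the ``in particular'' assertion is simply the special case $k=1$: by the notational conventions fixed at the start of Section 2, in that case $\kRP(M)=\RP(M)$, $\rdk(M)=\rd(M)$, $\rdok(M)=\rdo(M)$, and $\bdy={\mathcal R}^{Ric>0}(\bar M)^*$, so the general chain of homotopy equivalences specializes verbatim.

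The step I expect to be the main obstacle is the verification that the non-open subspaces $\rdk(M)$ and $\rdok(M)$ have the homotopy type of CW complexes. Unlike $\kRP(M)$ they are not open in the ambient Fréchet space, so checking that they are ANRs — equivalently, local contractibility at metrics which are already round on a neighbourhood of $x_0$ — is the one point requiring a real, if routine, argument; it can also be sidestepped by noting that $\rdok(M)\cong\bdy$ sits inside a manifold of metrics with boundary for which the relevant ANR/CW-type properties are documented in the literature.
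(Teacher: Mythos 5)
Your overall route is the paper's: reduce to Lemma \ref{homeo} together with Theorem \ref{weak-homotopy-rd}, then upgrade the two weak homotopy equivalences to genuine ones by general topology, with the ``in particular'' statement being the case $k=1$ of the notation. The divergence is in how the upgrade is done, and this is where your proposal has a genuine gap. The paper settles it by observing that all the spaces are metrizable (citing \cite[1.38(c), 1.46]{Ru}) and then invoking \cite[Theorem 15]{Pa}. You instead go through Whitehead's theorem, which obliges you to show that $\kRP(M)$, $\rdk(M)$ and $\rdok(M)$ all have CW homotopy type. For $\kRP(M)$ your argument is fine: the $\Rk>0$ condition is open in the $C^\infty$ topology, so this is an open subset of a Fr\'echet space, hence an ANR. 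But for $\rdk(M)$ and $\rdok(M)$ --- the step you yourself flag as the main obstacle --- what you offer is an assertion, not a proof, and the suggested mechanisms do not obviously work.

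Concretely: these subspaces are neither open nor locally convex in any evident sense, since a convex combination of two metrics which are round near $x_0$ (of different radii, with respect to different charts) is in general not round on any neighbourhood of $x_0$; so the ``local convex structure of the space of metrics'' does not restrict to them. Nor does the Section 2 machinery apply off the shelf: the uniform choices of $\lambda,\epsilon,\epsilon'$ needed to round an entire family simultaneously are established only for compact families (Corollary \ref{cpt_family}, building on Proposition \ref{kpos_rounding}), whereas a neighbourhood in these function spaces is not compact; and even granting such a uniform rounding of a neighbourhood, one must still contract the resulting family of locally round metrics of varying radii inside $\rdk(M)$, which is exactly what the missing convexity was supposed to provide. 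Your fallback --- that $\rdok(M)\cong\bdy$ and the required ANR/CW-type property of $\bdy$ is ``documented in the literature'' --- suffers the same defect: $\bdy$ is cut out by non-open boundary conditions (boundary round of radius exactly $1$, round collar, concavity), so it is not covered by the standard results for open subsets of Fr\'echet spaces. Thus, as written, the CW-type (equivalently ANR, or CW-domination) claim for the subspaces is unproven, and it is precisely the point the paper disposes of with the metrizability-plus-Palais citation; to avoid that citation you would have to establish local contractibility or CW domination of $\rdk(M)$ and $\rdok(M)$ directly, which is not routine.
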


\begin{proof} 
By Lemma \ref{homeo} it suffices to show that $\kRP \simeq \rdk \simeq \rdok.$ Theorem \ref{weak-homotopy-rd} establishes this for  {\em weak} homotopy equivalence. In order to strengthen this to a genuine
homotopy equivalence, we observe that the space of all Riemannian metrics on a compact manifold is a metrizable space, and hence so are $\kRP(M)$ and $\rdok(M)$. For more details see, for example, \cite[1.38(c) and 1.46]{Ru}. Having
established metrizability, the existence of the desired homotopy equivalence now follows directly from \cite[Theorem 15]{Pa}.
\end{proof}


\section{$H$-space structures}\label{Hspacesection} 

In this section we will prove Theorem A. We will show that the space $\rdok(S^n)$ has an $H$-space structure when $n \ge 3$ and $k\ge 2$. The fact that this space is homotopy equivalent to $\kRP(S^n)$ means that by choosing any homotopy equivalence maps, we can pull back the $H$-space structure from $\rdok(S^n)$ to $\kRP(S^n)$, proving Theorem A. In order that $\rdok(S^n)$ be unambiguously defined, let us fix the basepoint $x_0$ of $S^n$ to be the north pole.

Recall from the Introduction that the surgery result of Wolfson implies that connected sums are possible within $Ric_k>0$ for any $2 \le k \le n.$ 
The $H$-space operation we will define below will be a special, modified version of the Wolfson construction. This is analogous to the approach taken by the first author in \cite{Wa}. The following technical lemma will be crucial to defining this special connected sum operation.

\begin{lemma}[Tube Lemma]\label{tube}
Given $n \ge 3$, $R>0$ and $\epsilon \in (0,R),$ set $\tau_{R,\epsilon}=R\cos^{-1}(\epsilon/R).$ Then there exist constants $\rho_{R,\epsilon} \in (0,R),$ $\ell_{R,\epsilon}>0$, $\kappa_{R,\epsilon}:=\frac{1}{2}+\frac{\pi}{4\cos^{-1}(\epsilon/R)}$, and a smooth warped product metric $g_{R,\epsilon}$ on $[\tau_{R,\epsilon},\ell_{R,\epsilon}] \times S^{n-1}$, all depending smoothly on $R$ and $\epsilon$, such that
\begin{enumerate}
\item[(i)] for $r \in [\tau_{R,\epsilon},\kappa_{R,\epsilon}\tau_{R,\epsilon}],$ $g_{R,\epsilon}=dr^2+R\cos^2(r/R)ds^2_{n-1}$;
\item[(ii)] for $r \in [\ell_{R,\epsilon}-1,\ell_{R,\epsilon}]$, $g_{R,\epsilon}=dr^2+\rho^2_{R,\epsilon}ds^2_{n-1}$;
\item[(iii)] $Ric_2(g_{R,\epsilon})>0$.
\end{enumerate}
\end{lemma}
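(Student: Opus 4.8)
The plan is to reduce the statement to a one-variable problem for the warping function and then run a Gromov--Lawson type bending argument adapted to $Ric_2$, with care taken over the smoothness of the junctions and its smooth dependence on $(R,\epsilon)$. For a warped product $g=dr^2+\phi(r)^2\,ds^2_{n-1}$ on an interval times $S^{n-1}$ with $n\ge 3$, the Ricci tensor is diagonal, with radial eigenvalue $\lambda_0=-(n-1)\phi''/\phi$ and spherical eigenvalue $\lambda_1=-\phi''/\phi+(n-2)\frac{1-(\phi')^2}{\phi^2}$ of multiplicity $n-1\ge 2$; hence the sum of its two smallest eigenvalues is $\min\{\lambda_0+\lambda_1,\,2\lambda_1\}$, so that $Ric_2(g)>0$ if and only if $\lambda_0+\lambda_1>0$ and $\lambda_1>0$, and whenever $\phi>0$ and $|\phi'|<1$ these two conditions together amount to the single inequality
\[
\phi''<\frac{n-2}{n}\cdot\frac{1-(\phi')^2}{\phi},
\]
which I will call $(\star)$. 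Both prescribed pieces satisfy $(\star)$ with room: the round cap $\phi=R\cos(r/R)$ has $\phi''=-\phi/R^2<0$ and $|\phi'|<1$, while the cylinder $\phi\equiv\rho$ has $\phi''=0<\frac{n-2}{n\rho}$.

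\textbf{Construction of $g_{R,\epsilon}$.} Since $\kappa_{R,\epsilon}\tau_{R,\epsilon}/R=\frac{1}{2}\cos^{-1}(\epsilon/R)+\frac{\pi}{4}\in(\frac{\pi}{4},\frac{\pi}{2})$, on $[\tau_{R,\epsilon},\kappa_{R,\epsilon}\tau_{R,\epsilon}]$ the metric prescribed in (i) is an honest round spherical cap, with $\phi'(\kappa_{R,\epsilon}\tau_{R,\epsilon})=-\sin(\kappa_{R,\epsilon}\tau_{R,\epsilon}/R)\in(-1,-\frac{1}{\sqrt{2}})$. Beyond $r=\kappa_{R,\epsilon}\tau_{R,\epsilon}$ I would build $\phi_{R,\epsilon}$ through its profile curve: write $\phi_{R,\epsilon}=b$ for a unit-speed plane curve $r\mapsto(a(r),b(r))$, and prescribe its signed curvature $\mathsf{k}_{R,\epsilon}(r)$ by an explicit smooth formula which agrees to infinite order at $\kappa_{R,\epsilon}\tau_{R,\epsilon}$ with the cap's value $-1/R$, then is raised through $0$ to positive values so that the curve turns until its tangent is horizontal, and is finally lowered back to $0$ over an interval of length $1$, after which the profile is a horizontal ray and $g_{R,\epsilon}=dr^2+\rho_{R,\epsilon}^2\,ds^2_{n-1}$; one lets $\ell_{R,\epsilon}$ be the right endpoint of that ray. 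Because the curve is unit-speed, $\phi'$ is the vertical component of its unit tangent, so $|\phi'|\le 1$ automatically, with equality only if the tangent is vertical, which we avoid; and $\phi''=\mathsf{k}_{R,\epsilon}\sqrt{1-(\phi')^2}$, so on the bend $(\star)$ becomes $\mathsf{k}_{R,\epsilon}<\frac{n-2}{n}\cdot\frac{\sqrt{1-(\phi')^2}}{\phi}$, a strictly positive bound as long as the tangent is not vertical and $\phi>0$. One therefore keeps $\mathsf{k}_{R,\epsilon}$ just below this threshold; a short comparison (integrating the inequality for $\phi$ with $\phi'$ as independent variable) shows $\phi$ stays positive all the way to the horizontal ray, so a valid $\rho_{R,\epsilon}\in(0,R)$ exists -- one may take it slightly below $R\bigl(\cos(\kappa_{R,\epsilon}\tau_{R,\epsilon}/R)\bigr)^{(2n-2)/(n-2)}$ -- and $(\star)$ holds with strict room on the bend. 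The only remaining points are the smoothing at the junctions and the parameter dependence.

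\textbf{Smoothness and dependence on $(R,\epsilon)$.} The profile curvature $\mathsf{k}_{R,\epsilon}$, the cut-offs deciding where it turns on and off, the bend length, and hence $\rho_{R,\epsilon}$ and $\ell_{R,\epsilon}$, should all be written by explicit formulas assembled from a single fixed smooth cut-off function, so that $g_{R,\epsilon}$ depends smoothly on $(R,\epsilon)$ on $\{R>0,\ 0<\epsilon<R\}$. The junctions at $r=\kappa_{R,\epsilon}\tau_{R,\epsilon}$ and at the start of the cylinder are $C^\infty$ by construction -- $\mathsf{k}_{R,\epsilon}$ having been matched there to $-1/R$, respectively to $0$, to infinite order -- and since $(\star)$ holds strictly on both sides of each junction and the quantities involved are continuous, this matching does not spoil $(\star)$.

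\textbf{Expected main obstacle.} The delicate part is the start of the bend when $\epsilon$ is small: there $\phi'$ is close to $-1$, so $1-(\phi')^2=\cos^2(\kappa_{R,\epsilon}\tau_{R,\epsilon}/R)$ is small and $(\star)$ forces $\mathsf{k}_{R,\epsilon}$ to be very small at first; the bend then starts slowly, its length (and $\ell_{R,\epsilon}$) grows without bound, and $\rho_{R,\epsilon}\to 0$, as $\epsilon\to 0$. One has to verify that nonetheless $\phi'$ stays strictly above $-1$ throughout the bend, so that $(\star)$ never degenerates, that $\phi$ remains positive throughout (this is exactly what the comparison above provides), and that all the constants produced still vary smoothly with $(R,\epsilon)$ over the whole admissible region. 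The remaining verifications of $(\star)$ -- on the cap, on the cylinder, and in the bulk of the bend -- are then routine.
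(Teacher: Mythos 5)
Your derivation of the $Ric_2>0$ condition is correct and matches the paper's reduction to the inequality $\phi'' < \frac{n-2}{n}\cdot\frac{1-(\phi')^2}{\phi}$ (the paper's (\ref{Ric2Ineq}), with $\bar n = n/(n-2)$), so both proofs start from the same pointwise criterion; where you then diverge is in how the profile is built. You run a classic Gromov--Lawson style bending: prescribe the signed curvature $\mathsf{k}_{R,\epsilon}$ of a unit-speed profile curve, matched to infinite order to $-1/R$ at $r=\kappa_{R,\epsilon}\tau_{R,\epsilon}$ and to $0$ at the cylinder end, so the junctions are $C^\infty$ for free, and the $Ric_2>0$ condition becomes the pointwise bound $\mathsf{k}<\frac{n-2}{n}\cdot\frac{\sqrt{1-(\phi')^2}}{\phi}$. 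The paper deliberately does \emph{not} do this: as flagged in the introduction, it builds the main body of the tube ``in a single piece'' by solving the ODE of Lemma \ref{E_F} (with $a=4>\bar n$), accepting a $C^1$ corner at each end, and then smooths those corners via the explicit $C^1\to C^2\to C^\infty$ procedure of Corollary \ref{smoothing}, with all smoothing parameters chosen by fixed smooth formulas in $(R,\epsilon)$. So the paper trades the pains of corner-smoothing for a clean closed-form description of the bulk of the warping function, whereas you trade explicit junction-smoothing for the harder task of designing $\mathsf{k}_{R,\epsilon}$ so that the bend both completes and keeps $\phi>0$. Both are viable; your route is structurally closer to \cite{GL} and to what the paper attributes to Wolfson.

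One step in your sketch is under-justified and is worth tightening. You say ``one therefore keeps $\mathsf{k}_{R,\epsilon}$ just below this threshold; a short comparison \ldots shows $\phi$ stays positive.'' As stated this is backwards: with $\mathsf{k}$ strictly below the threshold, the quantity $Q=\phi^{(n-2)/n}\sqrt{1-(\phi')^2}$ is \emph{decreasing} along the bend (while $\phi'<0$), which gives only an upper bound on $\phi$ at $\phi'=0$, not a lower bound. If $\mathsf{k}$ is ``below the threshold'' but too small, $\phi$ can hit $0$ before the tangent becomes horizontal. What actually works, and what I believe you intend, is to take $\mathsf{k}$ equal to a fixed fraction $(1-\eta)$ of the threshold away from the two transition zones; then $\tilde Q=\phi^{(1-\eta)(n-2)/n}\sqrt{1-(\phi')^2}$ is conserved, giving the positive lower bound $\phi\ge \tilde Q(r_0)^{n/((1-\eta)(n-2))}$ and a finite bend, and the short transition intervals at the two ends only change $\phi$ by $O(\delta)$. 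With that correction, your comparison value for $\rho_{R,\epsilon}$ (slightly below $R\bigl(\cos(\kappa_{R,\epsilon}\tau_{R,\epsilon}/R)\bigr)^{(2n-2)/(n-2)}$, the $\eta=0$ endpoint) is right. You should also be a little more careful at the cylinder end: the moment $\phi'$ reaches $0$ must coincide with $\mathsf{k}$ vanishing to infinite order, which is not automatic for a generic ``lower $\mathsf{k}$ to $0$ over length $1$'' recipe; it is cleanest to prescribe the trajectory in the $(\phi,\phi')$-plane (or the tangent angle $\theta$) directly rather than $\mathsf{k}$ as a raw function of $r$. Finally, the lemma requires all constants and the metric to depend \emph{smoothly} on $(R,\epsilon)$, which the paper labours over with its explicit choice of $\alpha(R,\epsilon)$; your remark that all cut-offs should be ``assembled from a single fixed smooth cut-off function'' is the right idea, but in a full write-up you would need to present those formulas and verify smoothness of $\rho_{R,\epsilon}$ and $\ell_{R,\epsilon}$ across the whole parameter region, including as $\epsilon\to 0$ where $\ell_{R,\epsilon}\to\infty$.
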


Notice that the definition of $\kappa_{R,\epsilon}$ ensures that $\tau_{R,\epsilon}<\kappa_{R,\epsilon} \tau_{R,\epsilon} <R\pi/2.$  In particular this means that $\cos(\kappa_{R,\epsilon}\tau_{R,\epsilon}/R)>0.$
Let us denote the Riemannian manifold $([\tau_{R,\epsilon},\ell_{R,\epsilon}] \times S^{n-1},g_{R,\epsilon})$ by ${\mathcal T}_{R,\epsilon}$. A key point here is that for $r$ close to $\tau_{R,\epsilon}$, ${\mathcal T}_{R,\epsilon}$ looks like a radius $R$ round metric, and the $r=\tau_{R,\epsilon}$ boundary is a round sphere with intrinsic radius $\epsilon.$ In the current section, we will only need to consider $R>1$ and $\epsilon=1.$ For simplicity we will denote the resulting `tube' by ${\mathcal T}_R$, and drop the $\epsilon$ subscripts from the related quantities.

Before proceeding with the proof it is worth recalling a well known calculation for the Ricci curvature of a rotationally symmetric metric $dt^{2}+f(t)^{2}ds_{n-1}^{2},$ where $f:[0,L]\rightarrow (0,\infty)$ is a smooth function, (see page 69 of \cite{P}). Let $\partial_t, e_1,\cdots,e_{n-1}$ be an orthonormal frame with $\partial_t$ tangent to the interval $[0,L],$ and each $e_i$ tangent to the sphere $S^{n-1}.$ We have:
\begin{equation}\label{ricwarp}
\begin{split}
Ric(\partial_t)&=-(n-1)\frac{f''}{f},\\
Ric(e_i)&=(n-2)\frac{1-f'^2}{f^2}-\frac{f''}{f} \hspace{0.2cm}\text{, when $i=1,\cdots ,n-1$}.
\end{split}
\end{equation}
Thus, $$Ric_{2}=\min\left\{(n-2)\frac{1-f'^2}{f^2}-n\frac{f''}{f},\quad 2(n-2)\frac{1-f'^2}{f^2}-2\frac{f''}{f}\right\}.$$
We will be interested in the case when $n\geq 3$. Furthermore, we will insist that the function $f$ satisfy the condition that $ 0\leq |f'|\leq 1$. Thus the inequality $$ \frac{1-f'^2}{f^2} \ge 0$$
always holds, 
and where $f''<0$ it is clear that $Ric_2>0.$
If $f''\ge 0$, it is evident that the 
$2$-Ricci curvature takes the form:
\begin{equation*}\label{ric2warp}
Ric_{2}=(n-2)\frac{1-f'^2}{f^2}-n\frac{f''}{f}.
\end{equation*}
A simple calculation then shows that in order to obtain 2-positive Ricci curvature, it suffices to specify $f$ such that:
\begin{equation}\label{Ric2Ineq}
f''<\frac{1-f'^2}{\bar{n}f},
\end{equation}
where $\bar{n}=\frac{n}{n-2}\in (1, 3]$, since $n\geq 3$. 

Returning to Lemma \ref{tube}, we will need to make some further preparations before presenting the proof. This will consist of a collection of smaller technical results. The first of these leads to the existence of a $C^1$-tube which approximates the desired tube ${\mathcal T}_R$. This approximating tube then has to be smoothed: we do this in stages. Firstly we show how to smooth from $C^1$ to $C^2$, then from $C^2$ to $C^\infty$. Finally we combine these results to obtain a $C^1$ to $C^\infty$ smoothing which depends only on the parameter $R$.

The next result is a variant of \cite[Lemma 3.3]{EF}. The result in \cite{EF} deals with positive scalar curvature, and we have adapted the idea to apply to $Ric_k>0.$
\begin{lemma}\label{E_F}
Given $a>0,$ $b>0$ and $-1<c<0$, there exists a solution $h(t)$ to the initial value problem 
\begin{align*} 
h''&=\frac{1-h'^2}{ah}, \\
h(t_0&)=b, \\ 
h'(t_0&)=c, \\
\end{align*} 
for $t \in [t_0,T],$ some $T>t_0$ with $h(T)>0,$ $h'(T)=0.$
\end{lemma}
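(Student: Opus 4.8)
The plan is to analyze the ODE $h'' = (1-h'^2)/(ah)$ directly as an autonomous-type second-order equation and extract a first integral. First I would note that the right-hand side is smooth in a neighbourhood of $(b,c)$ with $b>0$, so the Picard–Lindelöf theorem gives a unique local solution on some interval $[t_0, t_0+\delta)$. The key observation is that, as long as $h>0$ and $|h'|<1$, we have $h''>0$, so $h'$ is strictly increasing; since $h'(t_0)=c<0$, either $h'$ reaches $0$ in finite time (which is exactly what we want, with $T$ the first such time) or $h'$ stays negative forever. I want to rule out the second possibility and also ensure $h$ stays positive up to time $T$.

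The main technical device will be a conserved quantity. Multiplying the equation by $h'$ and rearranging, $\frac{h' h''}{1-h'^2} = \frac{h'}{ah}$, which integrates to $-\tfrac{1}{2}\log(1-h'^2) = \tfrac{1}{a}\log h + \mathrm{const}$, i.e. $(1-h'^2)\, h^{2/a}$ is constant along the solution. Call this constant $E = (1-c^2) b^{2/a} > 0$. This identity shows immediately that as long as the solution exists with $h>0$, we have $1-h'^2 = E\, h^{-2/a} > 0$ automatically — so $|h'|<1$ is preserved, $h''>0$ throughout, and $h'$ is monotone increasing. Moreover $h$ is increasing while $h'<0$ is impossible, so in fact $h$ decreases as long as $h'<0$; but $h$ cannot decrease to $0$, because as $h \to 0^+$ the relation forces $1-h'^2 \to +\infty$, contradicting $h'^2 \ge 0$. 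Hence $h$ stays bounded below by a positive constant (concretely $h \ge E^{a/2}$ is false in the wrong direction — rather $h^{2/a} \ge E$ fails; let me instead argue: since $1-h'^2 \le 1$ we get $E h^{-2/a} \le 1$, i.e. $h \ge E^{a/2}>0$), so the solution cannot escape to the boundary of the domain in finite time and persists.

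With $h$ bounded away from $0$ and $h'' = (1-h'^2)/(ah) \ge (1-c^2)/(a\,h_{\max})$ bounded below by a positive constant on any interval where $h' \le 0$ (using $h \le b$ there, since $h$ is decreasing while $h'<0$), the derivative $h'$ increases at a uniformly positive rate, so it must reach $0$ at some finite time $T > t_0$. At $t=T$ we have $h'(T)=0$, $h(T) = E^{a/2} = (1-c^2)^{a/2} b > 0$, and the solution is defined and smooth on all of $[t_0,T]$. This gives all the asserted properties. I expect the only genuine obstacle to be the bookkeeping needed to guarantee $T$ is finite rather than $h'\to 0$ asymptotically as $t\to\infty$; the first integral resolves this cleanly by pinning down $h(T)$ in advance, after which the uniform lower bound on $h''$ on the relevant interval closes the argument.
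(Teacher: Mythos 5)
Your argument is correct and takes essentially the same route as the paper's proof: you derive the same conserved quantity (your $E$ equals $C(t_0)^{-2}$ in the paper's notation, where $C(t)=h^{-1/a}/\sqrt{1-h'^2}$), obtain the same a priori bounds $E^{a/2}\le h\le b$ and $|h'|<1$ while $h'\le 0$, and conclude via the same lower bound $h''\ge(1-c^2)/(ab)$ that $h'$ reaches $0$ in finite time. The only difference is cosmetic: the paper separately discusses potential blow-up of $h$ to $+\infty$, a case which is in fact vacuous since $|h'|<1$, and your organization sidesteps that unnecessary case split by simply noting $h\le b$ on the interval where $h'<0$.
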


We remark that setting $a>3$ in the above lemma will give a function $h(t)$ which automatically satisfies the $Ric_2>0$ inequality (\ref{Ric2Ineq}). 

\begin{proof}
By the classical Picard-Lindel\"of Theorem, there exists a solution to the initial value problem at least in some interval $t \in (t_0-\epsilon,t_0+\epsilon).$ 
Observe that if the solution $h(t)$ is defined for some $t_1>t_0,$ then $h(t_1)>0$, as we must have $h(t)>0$ for all $t \in [t_0,t_1]$ in order for the ODE to be defined.

Set $$C(t)=\frac{h^{-1/a}(t)}{\sqrt{1-h'^2}(t)}.$$ By differentiating $C(t)$, we see that $C(t)$ is constant if $h(t)$ satisfies the above ODE. Moreover, we claim that throughout its domain of definition, $$h(t) \ge C(t_0)^{-a}.$$ To see this, we observe that for any $t$ at which $h(t)$ is defined, $$\frac{h^{-1/a}(t)}{\sqrt{1-h'^2(t)}}=C(t)=C(t_0)$$ with the second equality following from the fact that $C$ is a constant function. We then see that $h'(t) \in (-1,1)$ for all $t$ for which the solution is defined, and rearranging we obtain $$h^{-1/a}(t)=C(t_0)\sqrt{1-h'^2(t_1)} \le C(t_0).$$ The claim now follows immediately.

From the above analysis we see that the only way that the solution can fail to exist for all $t>t_0$ is if there exists $t_1 \in (t_0,\infty)$ such that $$\lim_{t \to t_1^{-}} h(t) =\infty.$$ (We can rule $-\infty$ as a limit since $h(t)$ is bounded below.) In this case, by the intermediate value theorem, since $h'(t_0)<0$, there exists $T\in (t_0,t_1)$ with $h'(T)=0.$

On the other hand, if the solution exists for all $t>t_0$ and we always have $h'(t)<0$, then it follows that $$h''(t) \ge \frac{1-h'^2(t_0)}{ah(t_0)}.$$ (To see this, note that since $h' \in (-1,1)$ we have $h''>0$, and thus $h'^2$ is decreasing. By assumption $h$ is also decreasing, hence the inequality.)
Integrating we see that $$h'(t) \ge \frac{1-c^2}{ab}(t-t_0)+c,$$ and so $h'(t)$ hits zero at some finite time after $t_0$. This contradicts the assumption that $h'(t)<0.$ We conclude that $h'$ must change sign, establishing the existence of a $T>t_0$ with the desired properties.
\end{proof}


\begin{lemma}[$C^1$ to $C^2$ smoothing]
\label{C1_to_C2}
Consider a function $g(t)$ which is smooth for all $t$ except at the point $t=t_0$ where it is $C^1$. Suppose further that $g''(t)$ remains bounded as $t \to t_0^\pm.$ Given $\nu>0$, there exists $\delta_0\in (0,1]$ depending continuously on $\nu$, $g$ and its first and second derivatives near $t=t_0$, such that for all $\delta\in (0,\delta_0),$ there is a function $\tilde{g}(t)$ which agrees with $g(t)$ outside a $\delta$-neighbourhood of $t=t_0$, and has the following properties.
\begin{enumerate}
\item It is piecewise smooth with precisely two non-smooth points at $t=t_0\pm \delta,$ at which it is $C^2$.
\item It has a $C^2$-continuous dependence on $\delta$.
\item It is $C^1$ $\nu$-close to $g$.
\item For $t \in (t_0-\delta,t_0+\delta)$, $\tilde{g}''$ interpolates between $g''(t_0-\delta)$ and $g''(t_0+\delta).$
\end{enumerate}
\end{lemma}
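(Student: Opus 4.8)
The plan is to construct $\tilde{g}$ explicitly by prescribing its second derivative on the interval $(t_0-\delta, t_0+\delta)$ and then integrating twice, choosing the two constants of integration so that $\tilde{g}$ and $\tilde{g}'$ match $g$ and $g'$ at the endpoints $t_0 \pm \delta$. First I would fix a standard smooth monotone interpolating function $\chi : [0,1] \to [0,1]$ with $\chi(0)=0$, $\chi(1)=1$, and $\chi'(0)=\chi'(1)=0$ (so that the resulting $\tilde{g}''$ is $C^1$ in the interior and only $C^0$—hence $\tilde{g}$ only $C^2$—at the two junction points $t_0 \pm \delta$, as item (1) requires). Writing $a_- = g''(t_0-\delta)$ and $a_+ = g''(t_0+\delta)$, I set, for $t \in [t_0-\delta, t_0+\delta]$,
\[
\tilde{g}''(t) = a_- + (a_+ - a_-)\,\chi\!\left(\frac{t - (t_0-\delta)}{2\delta}\right),
\]
which immediately gives property (4). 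Note $a_\pm$ are well-defined and bounded uniformly for small $\delta$ because $g''$ is assumed bounded near $t_0$; since $g$ is only $C^1$ at $t_0$, $a_-$ and $a_+$ need not agree, which is precisely why the naive choice ``glue $g''$ to itself'' fails and a genuine interpolation is needed.

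Next I would integrate. Set $\tilde{g}'(t_0-\delta) = g'(t_0-\delta)$ and integrate $\tilde{g}''$; then set $\tilde{g}(t_0-\delta) = g(t_0-\delta)$ and integrate again. This defines $\tilde{g}$ on $[t_0-\delta, t_0+\delta]$, and we extend by $\tilde{g} = g$ outside. The nontrivial point is that this glued function is genuinely $C^2$ at $t = t_0+\delta$ as well (it is $C^2$ at $t_0-\delta$ by construction). For the $t_0+\delta$ junction we must check three matching conditions: $\tilde{g}''(t_0+\delta) = a_+ = g''(t_0+\delta)$ (automatic from the formula, since $\chi(1)=1$); $\tilde{g}'(t_0+\delta) = g'(t_0+\delta)$; and $\tilde{g}(t_0+\delta) = g(t_0+\delta)$. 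The latter two are integral identities: after integrating, $\tilde{g}'(t_0+\delta) - g'(t_0-\delta) = \int_{t_0-\delta}^{t_0+\delta}\tilde{g}'' \,dt$, and I need this to equal $g'(t_0+\delta) - g'(t_0-\delta) = \int_{t_0-\delta}^{t_0+\delta} g''\,dt$. These do not match for the symmetric choice of $\chi$ above unless $\int \chi = 1/2$ over $[0,1]$; in general there is a discrepancy of order $\delta$ in $\tilde{g}'$ and order $\delta^2$ in $\tilde{g}$. I expect this to be the main obstacle, and the fix is to not use a fixed $\chi$ but to add lower-order correction terms: replace $(a_+-a_-)\chi$ by $(a_+-a_-)\chi(\cdot) + \beta_1 \phi_1(\cdot) + \beta_2\phi_2(\cdot)$ for two fixed bump functions $\phi_1,\phi_2$ supported in $(0,1)$ with prescribed (nonzero, linearly independent) pairs of moments $(\int \phi_i, \int \int \phi_i)$, and solve the resulting $2\times 2$ linear system for $\beta_1,\beta_2$ in terms of the endpoint data. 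The $\phi_i$ being supported in the open interval preserves the junction values of $\tilde{g}''$, and since the required $\beta_i$ are $O(\delta)$ (the mismatch being $O(\delta)$), the correction is small.

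Finally I would verify the remaining properties. Property (3): since $\tilde{g}'' - g''$ is bounded by $|a_+ - a_-| + O(\delta)$ on an interval of length $2\delta$, we get $|\tilde{g}' - g'| = O(\delta(|a_+-a_-| + \delta))$; and $|a_+ - a_-| = |g''(t_0+\delta) - g''(t_0-\delta)|$, which, while not small in general, is multiplied by $\delta$. So by choosing $\delta_0$ small enough—depending continuously on $\nu$ and on bounds for $g, g', g''$ near $t_0$, exactly as claimed—we force $\|\tilde{g} - g\|_{C^1} < \nu$. Property (2), $C^2$-dependence on $\delta$: everything in the construction—$a_\pm = g''(t_0\pm\delta)$, the rescaled arguments of $\chi, \phi_i$, the solved coefficients $\beta_i(\delta)$, and the iterated integrals—depends smoothly (in particular $C^2$) on $\delta$ for $\delta \in (0,\delta_0)$, using that $g$ is smooth away from $t_0$ so $g''(t_0 \pm \delta)$ is smooth in $\delta$. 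Property (1) is built in: $\tilde{g}$ is a composition/integral of smooth functions on $(t_0-\delta, t_0+\delta)$ hence smooth there, equals the smooth function $g$ on either side, and at the two seams the second derivatives match but third derivatives need not, so it is exactly $C^2$ and not better at $t_0\pm\delta$.
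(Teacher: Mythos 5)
Your strategy---prescribing $\tilde{g}''$ on the smoothing interval as a monotone interpolant $\chi$ plus two bump corrections $\beta_1\phi_1 + \beta_2\phi_2$, then integrating twice---is a genuinely different route from the paper's, which instead takes $\tilde{g}$ on $(t_0-\delta,t_0+\delta)$ to be the unique quintic Hermite polynomial matching $g,g',g''$ at the two endpoints and cites \cite[proof of Theorem~2]{BWW} for all estimates, including the interpolation property (4). Your approach is attractive because it isolates exactly why the gluing fails at $t_0+\delta$ and reduces it to a $2\times 2$ linear system.

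However, there is a genuine gap in your estimate of the correction coefficients, and it is precisely what endangers property (4). After rescaling to $[0,1]$, the correction $\beta_i\phi_i$ contributes $\beta_i\cdot(2\delta)\int_0^1\phi_i$ to the first-derivative matching at $t_0+\delta$ and $\beta_i\cdot(2\delta)^2\int_0^1(1-s)\phi_i(s)\,ds$ to the value matching. Since the mismatches to be cancelled are, as you say, $O(\delta)$ and $O(\delta^2)$, dividing through by $\delta$ and $\delta^2$ respectively leaves a system whose right-hand side is $O(1)$, so in fact $\beta_i = O(1)$, \emph{not} $O(\delta)$. This matters because $a_+-a_-$ tends to the nonzero one-sided jump $g''(t_0^+)-g''(t_0^-)$ as $\delta\to 0^+$, so the correction $\beta_i\phi_i$ is of the \emph{same} order as the interpolation range $|a_+-a_-|$; you cannot make it relatively small by shrinking $\delta$, and therefore nothing in your argument forces $\tilde{g}''$ to remain between $g''(t_0-\delta)$ and $g''(t_0+\delta)$. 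Property (4) is the input to Corollary~\ref{smoothing} (preservation of the $\Ric_2>0$ inequality), so it cannot simply be dropped. The gap can be closed: choose $\chi$ with $\int_0^1\chi=1/2$ and with the first moment $\int_0^1 s\,\chi(s)\,ds$ also tuned so the leading-order mismatch in the iterated integral vanishes (a short computation gives the required value $3/8$); then both mismatches improve by a factor of $\delta$, yielding $\beta_i=O(\delta)$ and hence corrections that \emph{are} small compared to $|a_+-a_-|$. But this normalization, and the verification that it achieves $\beta_i=O(\delta)$ uniformly and continuously in the data (as the $\delta_0$-claim requires), is missing from your write-up and is the substantive content of the lemma.
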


\begin{proof}
Consider initially any small $\delta>0.$ Over the interval $(t_0-\delta,t_0+\delta)$ we define $\tilde{g}$ to be a quintic polynomial. The coefficients of this polynomial are completely determined by the values taken by $g$ and its first and second derivatives at $t=t_0-\delta$ and $t=t_0+\delta,$ so as to create a $C^2$-function. The precise formula for this polynomial is displayed explicitly in \cite[proof of Theorem 2]{BWW}. Moreover, it is also shown there that as $\delta \to 0,$ $\tilde{g}$ converges to $g$ in the $C^1$-norm, and for $\delta$ sufficiently small, the second derivative of $\tilde{g}$ interpolates between its values at $t_0\pm \delta.$ One then sees from the analysis in \cite{BWW} that what it means to be `sufficiently small' in this context depends continuously on $\nu$ and on the $C^2$-behaviour of $g$ either side of $t=t_0$.
\end{proof}

At several points in the sequel we will need to consider a smooth `step' function. Let us fix such a function once and for all: let $\phi:[0,1] \to [0,1]$ be any choice of smooth function such that for some small $\epsilon>0$, $\phi(t)=0$ for $t \in [0,\epsilon],$ $\phi(t)=1$ for $t \in [1-\epsilon,1]$  and $0 \le \phi' <2$ throughout.


\begin{lemma}[$C^2$ to $C^\infty$ smoothing]
\label{C2_to_smooth}
Suppose that $f(t)$ and $g(t)$ are real-valued, real analytic functions in a neighbourhood of $t=t_0$. If $$h(t)=
\begin{cases}
&f(t) \text{ if } t \le t_0 \\
&g(t) \text{ if } t\ge t_0
\end{cases}
$$
is a $C^2$-function, then given any $\epsilon>0$ there is a smooth function $\tilde{h}$, depending smoothly on $\epsilon$, which agrees with $h$ outside an $\epsilon$-neighbourhood of $t=t_0$, and for which $|\tilde{h}-h|_{C^2} \to 0$ as $\epsilon \to 0.$
\end{lemma}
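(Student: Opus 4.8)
The plan is to produce $\tilde h$ by modifying $h$ only inside a small interval around $t_0$, replacing it with a convex combination of the two analytic pieces governed by the fixed step function $\phi$. Concretely, given $\epsilon>0$, rescale $\phi$ to the interval $[t_0-\epsilon,t_0+\epsilon]$ by setting $\phi_\epsilon(t)=\phi\bigl((t-t_0+\epsilon)/(2\epsilon)\bigr)$, so that $\phi_\epsilon\equiv 0$ near $t_0-\epsilon$ and $\phi_\epsilon\equiv 1$ near $t_0+\epsilon$, and define
\begin{equation*}
\tilde h(t)=\bigl(1-\phi_\epsilon(t)\bigr)f(t)+\phi_\epsilon(t)\,g(t)
\end{equation*}
for $t\in[t_0-\epsilon,t_0+\epsilon]$, with $\tilde h=h$ outside this interval. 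Because $f$ and $g$ are real analytic near $t_0$, they are in particular smooth on a neighbourhood containing $[t_0-\epsilon,t_0+\epsilon]$ once $\epsilon$ is small enough, so $\tilde h$ is smooth on the open interval; near the endpoints $\phi_\epsilon$ is locally constant ($0$ or $1$), so $\tilde h$ coincides with $f$ (resp. $g$) there and glues smoothly onto the unmodified parts of $h$. Smooth dependence on $\epsilon$ is immediate from the explicit formula, since $\phi_\epsilon(t)$ depends smoothly on $(\epsilon,t)$ away from $\epsilon=0$.

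The first step is therefore just to write down this interpolation and check the gluing at $t_0\pm\epsilon$, which is routine given that $\phi$ is constant near $0$ and $1$. The substantive step is the $C^2$-estimate: I need $|\tilde h-h|_{C^2}\to 0$ as $\epsilon\to 0$. Here I would write $\tilde h-h=\phi_\epsilon\cdot(g-f)$ on $[t_0-\epsilon,t_0+\epsilon]$ (extending $f,g$ analytically across $t_0$), and use the hypothesis that $h$ is $C^2$, which forces $f$ and $g$ to agree to second order at $t_0$: $f(t_0)=g(t_0)$, $f'(t_0)=g'(t_0)$, $f''(t_0)=g''(t_0)$. Consequently the analytic function $\psi:=g-f$ satisfies $\psi(t_0)=\psi'(t_0)=\psi''(t_0)=0$, so by Taylor's theorem $|\psi(t)|\le C|t-t_0|^3$, $|\psi'(t)|\le C|t-t_0|^2$, $|\psi''(t)|\le C|t-t_0|$ on the relevant interval, with $C$ depending only on $f,g$ near $t_0$. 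Differentiating the product and using $|\phi_\epsilon|\le 1$, $|\phi_\epsilon'|\le 1/\epsilon\cdot\sup|\phi'|$, $|\phi_\epsilon''|\le (1/\epsilon)^2\cdot\sup|\phi''|$, together with the fact that these derivatives are supported where $|t-t_0|\le\epsilon$, every term in $(\phi_\epsilon\psi)''$ is bounded by a constant times $\epsilon$: schematically $\phi_\epsilon''\psi=O(\epsilon^{-2})\cdot O(\epsilon^3)=O(\epsilon)$, $\phi_\epsilon'\psi'=O(\epsilon^{-1})\cdot O(\epsilon^2)=O(\epsilon)$, and $\phi_\epsilon\psi''=O(1)\cdot O(\epsilon)=O(\epsilon)$. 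Hence $|\tilde h-h|_{C^2}=O(\epsilon)\to 0$.

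The main obstacle, such as it is, is bookkeeping rather than conceptual: one must be careful that the analytic continuations of $f$ and $g$ are defined on a fixed neighbourhood of $t_0$ (so that the constant $C$ above is genuinely uniform for all small $\epsilon$), and that the cancellation of derivatives at $t_0$ — which is exactly the $C^2$-matching hypothesis — is used to gain the extra powers of $(t-t_0)$ that beat the blow-up of $\phi_\epsilon', \phi_\epsilon''$. Once the vanishing $\psi(t_0)=\psi'(t_0)=\psi''(t_0)=0$ is in hand, the rest is the elementary Leibniz-rule estimate sketched above. I would also remark that the same argument shows $\tilde h$ depends continuously (indeed smoothly, on $\epsilon>0$) in the $C^2$-topology, which is what is needed when this lemma is fed into the proof of the Tube Lemma.
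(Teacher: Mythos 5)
Your proof is correct and follows essentially the same route as the paper: the same convex combination of $f$ and $g$ via a rescaled step function, the same use of the $C^2$-matching at $t_0$ to gain three powers of $(t-t_0)$, and the same Leibniz-rule estimate balancing these against the $\epsilon^{-1},\epsilon^{-2}$ blow-up of $\phi_\epsilon',\phi_\epsilon''$. One minor bookkeeping slip: for $t\ge t_0$ we have $h=g$, so there $\tilde h-h=(1-\phi_\epsilon)(f-g)$ rather than $\phi_\epsilon(g-f)$; since $1-\phi_\epsilon$ obeys the same bounds as $\phi_\epsilon$, the $O(\epsilon)$ estimate is unaffected.
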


\begin{proof}
Let $\psi(t):[-1,1] \to [0,1]$ be defined by $\psi(t):=\phi((t+1)/2).$ Then set $\psi_\epsilon(t):=\psi(t/\epsilon)$. Thus $\psi_\epsilon$ is a smooth step function defined for $t \in [-\epsilon,\epsilon].$ We set $$\tilde{h}=
\begin{cases}
& \bigl(1-\psi_\epsilon(t-t_0)\bigr)f(t)+\psi_\epsilon(t-t_0)g(t) \text{ for } t \in [t_0-\epsilon,t_0+\epsilon] \\
& h(t) \text{ otherwise.}
\end{cases}.
$$
Clearly $\tilde{h}$ is smooth. We must consider $\tilde{h}-h.$ To this end, note that if $M_1$ and $M_2$ are such that $|\psi'| \le M_1$ and $|\psi''|\le M_2,$ then we have $|\psi_\epsilon'| \le \epsilon^{-1}M_1$ and $|\psi_\epsilon''| \le \epsilon^{-2}M_2.$ Also, since $h$ is $C^2$ at $t=t_0$ we have $f(t_0)=g(t_0),$ $f'(t_0)=g'(t_0)$ and $f''(t_0)=g''(t_0).$ It then follows from the real analyticity assumption that $g(t)-f(t)=O(t^3).$ Now $$\tilde{h}(t)-h(t)=
\begin{cases}
& \psi_{\epsilon}(t)(g(t)-f(t)) \text{ for } t \le t_0 \\
&(1-\psi_{\epsilon}(t))(f(t)-g(t)) \text{ for } t\ge t_0.
\end{cases}
$$
Thus on an $\epsilon$-neighbourhood of $t_0$ we have 
\begin{align*}
&|\tilde{h}-h|_{C^0}=O(\epsilon^3); \\
&|\tilde{h}'-h'|_{C^0}=O(\epsilon^{-1}.\epsilon^3+\epsilon^2)=O(\epsilon^2); \\
&|\tilde{h}''-h''|_{C^0}=O(\epsilon^{-2}.\epsilon^3+\epsilon^{-1}.\epsilon^2+\epsilon)=O(\epsilon).\\
\end{align*}
Therefore $|\tilde{h}-h|_{C^2} \to 0$ as $\epsilon \to 0.$ 
\end{proof}


\begin{cor}[Main smoothing]
\label{smoothing}
Let $f$ be a function which is real analytic except possibly at $t=t_0$, where it is (at least) $C^1$. If $f$ is precisely $C^1$ at $t=t_0,$ assume that $f''$ is bounded as $t \to t_0^\pm$. Suppose further that for $t>t_0$ and for $t<t_0$, $f$ satisfies the $Ric_2>0$ inequality (\ref{Ric2Ineq}). Then there exists $\alpha_0\in (0,1]$, depending continuously on $f$ and its first and second derivatives near $t_0$, such that for all $\alpha \in (0,\alpha_0)$, $f$ can be smoothed over an interval $(t_0-\alpha,t_0+\alpha)$ to give a function $\tilde{f}$ which satisfies the $Ric_2>0$ inequality. Moreover, if $f$ is actually smooth at $t=t_0,$ $|\tilde{f}-f|_{C^2} \to 0$ as $\alpha \to 0$.
\end{cor}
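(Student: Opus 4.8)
The plan is to combine the two previous smoothing lemmas in two stages, using the openness of the $Ric_2>0$ inequality (\ref{Ric2Ineq}) to guarantee that the inequality survives each small perturbation. The inequality (\ref{Ric2Ineq}) says $f'' < \frac{1-f'^2}{\bar n f}$, which is a strict inequality in the $C^2$-jet of $f$ at each point; hence if it holds on a compact set with some uniform gap, it persists under any perturbation that is small in $C^2$-norm on that set. This observation is the engine behind both stages.

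First I would handle the case where $f$ is precisely $C^1$ (and not $C^2$) at $t_0$, with $f''$ bounded on both sides. Apply Lemma \ref{C1_to_C2} with a parameter $\nu$ to be chosen: this produces, for all $\delta\in(0,\delta_0)$, a function $\tilde g$ which agrees with $f$ outside $(t_0-\delta,t_0+\delta)$, is $C^2$ everywhere (with the only non-smooth points now at $t_0\pm\delta$), is $C^1$ $\nu$-close to $f$, and whose second derivative on $(t_0-\delta,t_0+\delta)$ merely interpolates between the one-sided values $f''(t_0-\delta)$ and $f''(t_0+\delta)$. Since $f$ satisfies (\ref{Ric2Ineq}) strictly on each side of $t_0$ and $f''$ is bounded, the one-sided limits $f''(t_0^\pm)$ still satisfy the (non-strict, hence for $\delta$ small, strict) inequality; because $\tilde g''$ on the insertion interval lies in the convex hull of these two values and $\tilde g'$, $\tilde g$ are $C^1$-close to $f'$, $f$ there (so the right-hand side $\frac{1-\tilde g'^2}{\bar n\tilde g}$ is close to its value for $f$), choosing $\nu$ small enough and then $\delta$ small enough forces $\tilde g$ to satisfy (\ref{Ric2Ineq}) throughout. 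This reduces us to a function which is real analytic except at the two points $t_0\pm\delta$, where it is $C^2$.

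Next, at each of the two $C^2$-points, apply Lemma \ref{C2_to_smooth} with parameter $\epsilon$: this yields a smooth function $\tilde f$ agreeing with the previous one outside $\epsilon$-neighbourhoods of $t_0\pm\delta$, with $|\tilde f - \tilde g|_{C^2}\to 0$ as $\epsilon\to 0$. Since $\tilde g$ satisfies the strict inequality (\ref{Ric2Ineq}) on the compact $\epsilon_0$-neighbourhoods of $t_0\pm\delta$ with some positive gap, for $\epsilon$ small enough the $C^2$-small perturbation $\tilde f$ still satisfies it; away from these neighbourhoods $\tilde f$ coincides with $\tilde g$ and hence with $f$. Setting $\alpha:=\delta+\epsilon$ and tracking the dependences — $\delta_0$ depends continuously on $\nu$ and on $f,f',f''$ near $t_0$ by Lemma \ref{C1_to_C2}, and the admissible range of $\epsilon$ depends continuously on the $C^2$-behaviour of $\tilde g$, which in turn depends on these same data — gives an $\alpha_0\in(0,1]$ depending continuously on $f$ and its first and second derivatives near $t_0$, as claimed. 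If $f$ is already smooth at $t_0$, the first stage is unnecessary: apply Lemma \ref{C2_to_smooth} directly (with $t_0$ in place of $t_0\pm\delta$, noting $f$ is real analytic on each side and $C^2$ — indeed $C^\infty$ — across $t_0$), and the conclusion $|\tilde f - f|_{C^2}\to 0$ as $\alpha\to 0$ is exactly the final assertion of that lemma.

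The main obstacle is the bookkeeping in the first stage: one must verify that the interpolated second derivative produced by Lemma \ref{C1_to_C2} does not violate (\ref{Ric2Ineq}) on the insertion interval. This is where the structure of (\ref{Ric2Ineq}) is used essentially — the left side $f''$ is controlled because it stays in the convex hull of two admissible values, and the right side $\frac{1-f'^2}{\bar n f}$ is controlled because $f$ and $f'$ move only $C^1$-a little. One has to choose the parameters in the right order ($\nu$ first, from the uniform gap in the inequality for $f$ near $t_0$; then $\delta$; then $\epsilon$) and check each choice is continuous in the stated data, which is routine but must be done carefully to support the later uses of this corollary where these parameters are fed into further constructions.
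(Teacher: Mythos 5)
Your two-stage plan --- Lemma \ref{C1_to_C2} to pass from $C^1$ to $C^2$, then Lemma \ref{C2_to_smooth} at the two new $C^2$-points, with openness of the strict inequality (\ref{Ric2Ineq}) protecting $Ric_2>0$ through each small perturbation, and the parameters chosen in the order $\nu$, then $\delta$, then $\epsilon$ --- is exactly the paper's argument. The one small divergence is in the final clause: when $f$ is already smooth at $t_0$ you bypass the first stage entirely, whereas the paper runs both stages unconditionally and notes that Lemma \ref{C1_to_C2} then also gives $|\bar f-f|_{C^2}\to 0$ as $\delta\to 0$ (the interpolated second derivative being pinned between $f''(t_0-\delta)$ and $f''(t_0+\delta)$, which converge to $f''(t_0)$). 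Both settle the \emph{Moreover} clause as stated, but the paper's uniform treatment is what is implicitly relied on later (Lemma \ref{blemish} and the warped product deformation procedure), where the corollary is applied across a one-parameter family whose members are sometimes smooth and sometimes only $C^1$ at the splice point, and the output $\tilde f$ must vary smoothly with the parameter --- a branching construction would not obviously interpolate continuously across that boundary.
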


\begin{proof} 
First we consider smoothing $f$ from $C^1$ to $C^2$ using Lemma \ref{C1_to_C2}. Denote the result of this smoothing by $\bar{f}$. For $\nu>0$, Lemma \ref{C1_to_C2} guarantees us a $\delta_0>0$ such that for any $\delta \in (0,\delta_0)$ we have $|\bar{f}-f|_{C^1}<\nu$, together with the property that over the interval $(t_0-\delta,t_0+\delta)$, $\bar{f}''$ interpolates between $f''(t_0\pm \delta).$ Thus if $\nu$ is chosen sufficiently small, it follows that $\bar{f}$ will satisfy the $Ric_2>0$ inequality. Denote by $\nu_0>0$ the supremum within the set $(0,1]$ of those $\nu$ with this property. Now set $\delta_0$ to be the constant produced by Lemma \ref{C1_to_C2} corresponding to $\nu=\nu_0/2$. As both the $C^2$ smoothing and the $Ric_2>0$ inequality have a $C^2$ dependence on the original function near $t=t_0$, we see that $\nu_0$, and therefore $\delta_0$, vary continuously as $f$ varies in a $C^2$-continuous fashion.

Next, we smooth $\bar{f}$ from a $C^2$ to a $C^\infty$-function $\tilde{f}$ using Lemma \ref{C2_to_smooth}. Clearly, since $f$ is real analytic, by construction $\bar{f}$ is also real analytic, so Lemma \ref{C2_to_smooth} applies. Given that $\bar{f}$ satisfies the $Ric_2>0$ inequality, there exists $\epsilon_1$ maximal in $(0,1]$ such that if a smooth function $\theta$ satisfies $|\tilde{f}-\theta|_{C^2}<\epsilon_1$ on the interval $[t_0-1,t_0+1]$, then $\theta$ must also satisfy the $Ric_2>0$ inequality. 

Set the value of $\alpha_0$ in the statement of Corollary \ref{smoothing} to be $\delta_0$. For any $\alpha \in (0,\alpha_0),$ set $\delta=\alpha/2$, and $\epsilon=\min\{\epsilon_1,\alpha/4\}.$

In the $C^1$-to-$C^2$ smoothing, our choice of $\delta_0$ and $\delta$ guarantee that the resulting $C^2$ function $\bar{f}$ satisfies the $Ric_2>0$ inequality, and is smooth away from $t=t_0\pm \alpha/2.$ The $C^2$-to-$C^\infty$ smoothing then smooths each of these non-smooth points over an interval of length $\epsilon,$ hence the whole deformation takes places over the interval $(t_0-\alpha/2-\epsilon,t_0+\alpha/2+\epsilon).$ As $\epsilon\le \alpha/4,$ this interval is contained in $(t_0-\alpha,t_0+\alpha)$ as required. As $\epsilon \le \epsilon_1,$ the function $\tilde{f}$ also satisfies the $Ric_2>0$ inequality.



The final task is to show that if $f$ is actually smooth at $t=t_0,$ $|\tilde{f}-f|_{C^2} \to 0$ as $\alpha \to 0$. It follows from Lemma \ref{C1_to_C2} in this case that as $\delta \to 0$, $|\bar{f}-f|_{C^2} \to 0.$ From Lemma \ref{C2_to_smooth} it follows that $|\tilde{f}-\bar{f}|_{C^2} \to 0$ as $\epsilon \to 0.$ Given that $\alpha$ controls the size of both $\delta$ and $\epsilon$ (as indicated above), the claim now follows immediately from the triangle inequality.
\end{proof} 
 
\begin{proof}[Proof of Lemma \ref{tube}]
Let $$\zeta_{R,\epsilon}:=\frac{\frac{3\pi}{4}+\frac{1}{2}\cos^{-1}(\epsilon/R)}{\frac{\pi}{2}+\cos^{-1}(\epsilon/R)}.$$ It is easily checked that $\kappa_{R,\epsilon}\tau_{R,\epsilon}<\kappa_{R,\epsilon}\tau_{R,\epsilon}\zeta_{R,\epsilon}<R\pi/2.$

We begin by constructing the main part of the tube, which will run from $r=\kappa_{R,\epsilon} \tau_{R,\epsilon}\zeta_{R,\epsilon}$ to $r=\ell_{R,\epsilon}-\frac{3}{2}.$  To do this we use Lemma \ref{E_F} with $t_0=\kappa_{R,\epsilon} \tau_{R,\epsilon}\zeta_{R,\epsilon},$ $b=R\cos(\kappa_{R,\epsilon} \tau_{R,\epsilon} \zeta_{R,\epsilon}/R),$ $c=-\sin(\kappa_{R,\epsilon} \tau_{R,\epsilon}\zeta_{R,\epsilon}/R),$ and $a=4,$ to produce a function $h(r)$. Given the constant $T$ produced by Lemma \ref{E_F} in this case, we set $\ell_{R,\epsilon}=T+\frac{3}{2},$ and $\rho_{R,\epsilon}=h(T).$ The corresponding smooth warped product metric $dr^2+h^2(r)ds^2_{n-1}$ will have $Ric_2>0$, and give a $C^1$ join at $r=\kappa_{R,\epsilon}\tau_{R,\epsilon} \zeta_{R,\epsilon}$ and $r=\ell_{R,\epsilon}-\frac{3}{2}$ with the $Ric_2>0$ warped product metrics specified in points (i) and (ii) of Lemma \ref{E_F}.

Our next task is to smooth the metric at these two non-smooth points. To do this we use Corollary \ref{smoothing}. First note that by the Cauchy-Kovalevskaya Theorem, the function $h$ is real-analytic, hence the Corollary applies here. Notice also that our $C^1$-warped product scaling function depends smoothly on $R$ and $\epsilon$, in the sense that the two $C^1$-points vary smoothly with these parameters, and at any point in the interior of a $C^\infty$ region, the output values vary smoothly as the function varies with $R, \epsilon$. Thus for each of the two non-smooth points we obtain constants $\alpha_1(R,\epsilon),$ $\alpha_2(R,\epsilon)$ corresponding to $\alpha_0$ in the smoothing Corollary, which depend continuously on $R,\epsilon$. Set $$\alpha_0(R,\epsilon)=\min\Big\{\alpha_1(R,\epsilon),\alpha_2(R,\epsilon),\frac{\kappa_{R,\epsilon}\tau_{R,\epsilon}(\zeta_{R,\epsilon}-1)}{100},\frac{\ell_{R,\epsilon}-\frac{3}{2}-\kappa_{R,\epsilon}\tau_{R,\epsilon} \zeta_{R,\epsilon}}{100},\frac{1}{100}\Big\},$$ so $\alpha_0(R,\epsilon)$ also varies continuously with $R$ and $\epsilon$. Note that the last three entries in the above minimum expression are included to keep the smoothing localized around the non-smooth points. In particular, the metric will still take the form $dr^2+\rho^2_R ds^2_{n-1}$ for $r\in [\ell_{R,\epsilon}-1,\ell_{R,\epsilon}]$, and $dr^2+R^2\cos^2(r/R)$ for $r \in [\tau_{R,\epsilon},\kappa_{R,\epsilon}\tau_{R,\epsilon}]$.

Now choose any smooth function $\alpha(R,\epsilon)$ with $0<\alpha(R,\epsilon)<\alpha_0(R,\epsilon)$ for all $R>0$ and $\epsilon \in (0,R).$ We will choose to smooth each $C^1$-point of our scaling function according to the smoothing Corollary with $\alpha=\alpha(R,\epsilon)$ for each of the non-smooth points. This preserves the $Ric_2>0$ condition, and the resulting tube $\mathcal{T}_{R,\epsilon}$ and its associated parameters all depend smoothly on $R$ and $\epsilon$.
\end{proof}

We will need one further result before we can discuss $H$-space structures on $\rdok(S^n)$. This, together with the tubes $\mathcal{T}_R={\mathcal T}_{R,1}$ (with $R>1$), will enable us to define a notion of multiplication. First, a preliminary lemma.

\begin{lemma}[Stretching lemma]
\label{stretch}
Given a warped product metric $dt^2+f^2(t)ds^2_{n-1}$ on $[a,b] \times S^{n-1},$ there exists $N>0$ (depending on $f$) such that the metric $dt^2+f^2(t/N)ds^2_{n-1}$ on $[aN,bN] \times S^{n-1}$ has $Ric_2>0.$
\end{lemma}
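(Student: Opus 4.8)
The plan is to read off the Ricci curvature of the rescaled metric from formula~(\ref{ricwarp}) together with the formula
$$Ric_2 = \min\left\{(n-2)\frac{1-f'^2}{f^2} - n\frac{f''}{f},\ 2(n-2)\frac{1-f'^2}{f^2} - 2\frac{f''}{f}\right\}$$
recorded just after it, and to observe that the substitution $f \mapsto f_N$, with $f_N(t):=f(t/N)$, leaves the dominant positive term $(n-2)/f^2$ unchanged while damping every derivative term by a factor $N^{-2}$. Since $n\ge 3$, for $N$ large the positive term wins everywhere.

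First I would record the scaling. Writing $s=t/N$, we have $f_N'(t)=N^{-1}f'(s)$ and $f_N''(t)=N^{-2}f''(s)$, so at the point of $[aN,bN]\times S^{n-1}$ with first coordinate $t=Ns$ the quantity $Ric_2\bigl(dt^2+f_N^2(t)ds_{n-1}^2\bigr)$ equals
$$\min\left\{(n-2)\frac{1}{f(s)^2} - \frac{1}{N^2}\Bigl((n-2)\frac{f'(s)^2}{f(s)^2} + n\frac{f''(s)}{f(s)}\Bigr),\ 2(n-2)\frac{1}{f(s)^2} - \frac{1}{N^2}\Bigl(2(n-2)\frac{f'(s)^2}{f(s)^2} + 2\frac{f''(s)}{f(s)}\Bigr)\right\},$$
with $s$ ranging over the compact interval $[a,b]$.

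Second I would invoke compactness. As $f$ is a smooth positive function on the closed interval $[a,b]$, the numbers $m:=\min_{[a,b]}f>0$, $M:=\max_{[a,b]}f$, $F_1:=\max_{[a,b]}|f'|$ and $F_2:=\max_{[a,b]}|f''|$ are all finite, and each of the two expressions in the minimum above is bounded below by $\tfrac{n-2}{M^2}-\tfrac{C}{N^2}$, where $C:=2(n-2)F_1^2/m^2+(n+2)F_2/m$ depends only on $n$ and the $C^2$-size of $f$ on $[a,b]$. Here the hypothesis $n\ge 3$ is used precisely to ensure $n-2>0$, so that the leading term is genuinely positive. Choosing any $N>M\sqrt{C/(n-2)}$ then forces $Ric_2>0$ at every point of $[aN,bN]\times S^{n-1}$, and this $N$ depends only on $f$, proving the lemma.

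I do not expect a genuine obstacle here: the only points needing care are the bookkeeping of the $N^{-2}$ factors produced by differentiating $f(t/N)$, and the observation that $n\ge 3$ is essential (for $n=2$ the term $(n-2)/f^2$ vanishes, stretching does nothing to the sign of $f''$, and the statement is false).
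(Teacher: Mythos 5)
Your proof is correct and follows essentially the same idea as the paper's: stretching by $N$ multiplies $f'$ by $N^{-1}$ and $f''$ by $N^{-2}$, so that the metric becomes (uniformly, by compactness of $[a,b]$) $C^2$-close to the cylindrical metric $dt^2+f(s)^2ds^2_{n-1}$, whose $Ric_2$ equals $(n-2)/f(s)^2>0$ when $n\ge 3$. The paper states this as a one-line $C^2$-closeness argument and leaves the uniformity implicit; you make the same calculation explicit via the $Ric_2$ formula and quantify the compactness step, which is a harmless (and arguably clarifying) elaboration rather than a different method.
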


\begin{proof}
For any $t_0 \in [a,b]$, we can clearly arrange for the metric $dt^2+f^2(t/N)ds^2_{n-1}$ to be $C^2$-arbitrarily close in a neighbourhood of $t=t_0N$ to the metric $dt^2+f^2(t_0)ds^2_{n-1}$ by choosing $N$ sufficiently large. As the latter metric has $Ric_2>0,$ the result follows.
\end{proof}

The following result is now immediate:

\begin{cor}
Given $c_1,c_2>0$, there exists $\lambda_0=\lambda_0(c_1,c_2)$ minimal in $[1, \infty)$ such that for all $\lambda>\lambda_0$, the metric 
\begin{equation}\label{connect}
dt^2+\Bigl(\bigl((1-\phi(t/\lambda)\bigr)c_1+\phi(t/\lambda)c_2\Bigr)^2ds^2_{n-1}
\end{equation}
on $[0,\lambda] \times S^{n-1}$ has $Ric_2>0.$
\end{cor}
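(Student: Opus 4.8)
The plan is to reduce the claim to the Stretching Lemma (Lemma~\ref{stretch}). The warped product metric in~(\ref{connect}) is, for a fixed $\lambda$, obtained from the metric $dt^2 + \bigl((1-\phi(t))c_1 + \phi(t)c_2\bigr)^2 ds^2_{n-1}$ on $[0,1]\times S^{n-1}$ by the rescaling $t \mapsto t/\lambda$: indeed, writing $f(t) = (1-\phi(t))c_1 + \phi(t)c_2$, which is a smooth positive function on $[0,1]$ since $c_1,c_2>0$ and $\phi:[0,1]\to[0,1]$ is smooth, the metric~(\ref{connect}) is precisely $dt^2 + f^2(t/\lambda)ds^2_{n-1}$ on $[0,\lambda]\times S^{n-1}$.

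First I would observe that $f$ satisfies the hypothesis $0 \le |f'| \le 1$ needed for the $Ric_2$ computation to apply, or rather — more cleanly — I would simply invoke Lemma~\ref{stretch} directly with $[a,b]=[0,1]$ and this particular $f$. That lemma produces some $N = N(f)>0$ such that $dt^2 + f^2(t/N)ds^2_{n-1}$ on $[0,N]\times S^{n-1}$ has $Ric_2>0$. The only gap between Lemma~\ref{stretch} and the present statement is monotonicity in the stretching parameter: Lemma~\ref{stretch} gives \emph{one} good value $N$, whereas here we want $Ric_2>0$ for \emph{all} $\lambda$ beyond some threshold. This is immediate from the proof of Lemma~\ref{stretch} rather than its statement: the estimate there shows that for each $t_0\in[0,1]$ the metric $dt^2+f^2(t/\lambda)ds^2_{n-1}$ is $C^2$-close near $t=t_0\lambda$ to the round cylinder $dt^2+f^2(t_0)ds^2_{n-1}$ (which has $Ric_2>0$), with the $C^2$-distance controlled by $\lambda^{-1}$ times bounds on $f',f''$; since these bounds are uniform over the compact interval $[0,1]$ and since $Ric_2>0$ is a $C^2$-open condition, there is a uniform $\lambda_0$ so that $\lambda>\lambda_0$ forces $Ric_2>0$ everywhere on $[0,\lambda]\times S^{n-1}$. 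Thus I would either restate Lemma~\ref{stretch} to record the threshold explicitly, or just note that its proof yields this strengthening verbatim.

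Having produced such a $\lambda_0$, I would then define $\lambda_0(c_1,c_2)$ to be the infimum of the set of admissible thresholds, intersected with $[1,\infty)$ (taking the max with $1$); this set is nonempty by the above and is clearly upward-closed, so the infimum is well-defined and minimal as claimed, and for every $\lambda>\lambda_0(c_1,c_2)$ the metric~(\ref{connect}) has $Ric_2>0$.

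I do not anticipate a genuine obstacle here: the corollary is explicitly flagged in the text as ``immediate'' from Lemma~\ref{stretch}. The only point requiring a word of care is the passage from ``some $N$ works'' to ``all large $\lambda$ work'', and as explained this follows from the scaling-limit mechanism in the proof of Lemma~\ref{stretch} together with the $C^2$-openness of the curvature condition. One should also note in passing that the dependence of $\lambda_0$ on $(c_1,c_2)$ is as good as one could want (continuous, in fact), since the bounds on $f'$ and $f''$ driving the estimate depend continuously on $c_1,c_2$; this will matter later when the construction is applied in families, though it is not asserted in the present statement.
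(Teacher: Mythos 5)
Your proposal is correct and takes the same route as the paper, which offers no explicit proof but introduces the corollary with ``The following result is now immediate'' after the Stretching Lemma; you supply exactly the two small points the paper leaves implicit (that the $C^2$-closeness estimate in the Stretching Lemma's proof improves monotonically in $\lambda$, so one threshold gives all larger $\lambda$, and that the infimum of the upward-closed set of thresholds is attained).
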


\begin{definition}
Fix a smooth function $\lambda_\infty:{\mathbb R}^+ \times {\mathbb R}^+ \to (1,\infty)$ with the property that $\lambda_\infty(c_1,c_2)>\lambda_0(c_1,c_2)$ for all $c_1,c_2>0$. Given any $c_1,c_2>0,$ the `connecting piece' $C(c_1,c_2)$ is the product manifold $[0,\lambda_\infty] \times S^{n-1}$ equipped with the metric (\ref{connect}) with $\lambda=\lambda_\infty$.
\end{definition}

\begin{remark}
Notice that $C(c_1,c_2)$ varies smoothly with $c_1,c_2.$ In the special case $c_1=c_2=c$ we have $C(c,c)=([0,1] \times S^{n-1}, dt^2+c^2ds^2_{n-1}).$ Below we will need to consider the connecting piece in the situation where $c_1=\rho_R=\rho_{R,1}$ and $c_2=\rho_{100}=\rho_{100,1},$ where $\rho_{R,1}$ and $\rho_{100,1}$ are as in Lemma \ref{tube}.
\end{remark}

In order to define the $H$-space multiplication on $\rdok(S^n)$, we will not glue two metrics directly via a connected sum, but via an intermediate space, or `docking station'. The concept of a docking station originated in work of the first author (\cite{Wa}; see also \cite{BB}), based on a suggestion by Boris Botvinnik. In our case the docking station will be modelled on a round sphere of radius 100, $S^n(100),$ which has three distinguished points. The first of these is the north pole $x_0,$ which is the basepoint. The other two points, $z_1$ and $z_2,$ will be diametrically opposed points on the equator. Consider the open balls about $z_1,z_2$ of radius $100\sin^{-1}(1/100).$ The boundary of these balls is a sphere with intrinsic radius 1. Remove these balls from $S^n(100)$ to leave a manifold with two boundary components. 
Notice that this space is still based at the north pole $x_0.$ This will be our docking station.

\begin{definition}
\label{H_mult}
For any $n \ge 3$ and $2 \le k \le n,$ we define a map $$\sigma:\rdok(S^n) \times \rdok(S^n) \to \rdok(S^n)$$ as follows. Consider metrics $h_1,h_2 \in \rdok(S^n).$  Suppose that in a neighbourhood of the basepoint the metrics $h_1$ and $h_2$ are round with radius $R_1$ respectively $R_2.$ For each metric we remove an open ball about the basepoint so as to leave a round concave boundary with intrinsic radius 1. Attach tubes ${\mathcal T}_{R_1},$ ${\mathcal T}_{R_2}$ to these punctured spheres in the obvious way, and then attach the connecting pieces $C(\rho_{R_1},\rho_{100})$ and $C(\rho_{R_2},\rho_{100})$ to the free ends of the respective tubes. 
Next, glue a copy of ${\mathcal T}_{100}$ to the free end of each connecting piece, and finally, for $i=1,2,$ glue the resulting arrangement for $h_i$ to the boundary sphere of the docking station corresponding to $z_i$. Denote the closed Riemannian manifold thus produced by $\hat{\sigma}(h_1,h_2).$ To aid the reader, the Riemannian manifold $\hat{\sigma}(h_1,h_2)$ is depicted in Fig. \ref{Multiplication}. 
Now $h_1$ and $h_2$ are metrics defined on a `standard' copy of the sphere $S^n$, and in order to have a well-defined product $\sigma(h_1,h_2)$, we must identify $\hat{\sigma}(h_1,h_2)$ with a metric on the standard $S^n$. Moreover we must do this in a prescribed way which varies smoothly with $h_1,h_2$. Below we describe a connected sum contraction procedure which provides a means of doing this. The final metric we obtain will be our `product' $\sigma(h_1,h_2).$
\end{definition}

\begin{figure}[!htbp]
\vspace{2cm}
\hspace{1.5cm}
\begin{picture}(0,0)
\includegraphics{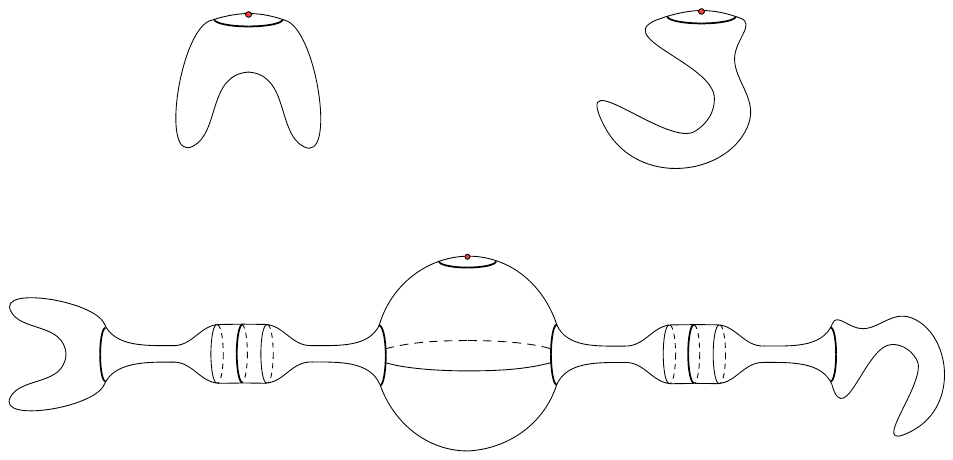}%
\end{picture}
\setlength{\unitlength}{3947sp}
\begingroup\makeatletter\ifx\SetFigFont\undefined%
\gdef\SetFigFont#1#2#3#4#5{%
  \reset@font\fontsize{#1}{#2pt}%
  \fontfamily{#3}\fontseries{#4}\fontshape{#5}%
  \selectfont}%
\fi\endgroup%
\begin{picture}(5079,1559)(1902,-7227)
\put(2600,-5000){\makebox(0,0)[lb]{\smash{{\SetFigFont{10}{8}{\rmdefault}{\mddefault}{\updefault}{\color[rgb]{0,0,0}$h_1$}%
}}}}
\put(5500,-5000){\makebox(0,0)[lb]{\smash{{\SetFigFont{10}{8}{\rmdefault}{\mddefault}{\updefault}{\color[rgb]{0,0,0}$h_2$}%
}}}}
\put(3750,-6100){\makebox(0,0)[lb]{\smash{{\SetFigFont{10}{8}{\rmdefault}{\mddefault}{\updefault}{\color[rgb]{0,0,0}$\hat{\sigma}(h_1,h_2)$}%
}}}}

\end{picture}%
\caption{The metrics $h_1,h_2 \in \rdok(S^n)$ (top), and the `product' manifold $\hat{\sigma}(h_1,h_2)$ (bottom)}
\label{Multiplication}
\end{figure}

\noindent{\bf Connected sum contraction.} Given metrics $h_1,h_2 \in \rdok(S^n),$ assume that in a neighbourhood of the basepoints these metrics restrict to round metrics of radii $R_1,R_2$ respectively, and we denote the round discs about the basepoints for which the boundary is a sphere of intrinsic radius $\epsilon _i\in (0,R_i)$ by $D_{1,\epsilon_1},D_{2,\epsilon_2}.$ Form the Riemannian manifold 
\begin{equation}\label{connectsum}
(S^n\setminus D_{1,\epsilon_1},h_1) \cup {\mathcal T}_{R_1,\epsilon_1} \cup C(\rho_{R_{1,\epsilon_1}},\rho_{R_{2,\epsilon_2}}) \cup {\mathcal T}_{R_2,\epsilon_2}\cup (S^n\setminus D_{2,\epsilon_2},h_2).
\end{equation} 
Here, and in what follows, we will also use the symbols $h_i$ to denote metrics restricted to discs within $S^n$.

Our aim is to introduce parametrizations into the discs $$(S^n\setminus D_{1,\epsilon_1},h_1) \cup {\mathcal T}_{R_1,\epsilon_1} \cup C(\rho_{R_1,\epsilon_1},\rho_{R_2,\epsilon_2}) \cup {\mathcal T}_{R_2,\epsilon_2},$$ and $D_{2,\epsilon_2}$, and then use these parametrizations to define a diffeomorphism from the latter disc to former, which we use to pull back the metric. We can then replace the metric $h_2|_{D_{2,\epsilon_2}}$ in the second sphere with the pull-back metric, and provided the pull-back and original metrics agree near the boundary of this disc, the resulting metric will be smooth. In this way we can identify the metric on (\ref{connectsum}) with a metric on the second sphere in a natural way. We will also ensure that the diffeomorphism we use depends smoothly on the parameters $R_i$ and $\epsilon_i.$ In what follows, we will label the spheres $S_1$ and $S_2$ to remove any ambiguity.

We begin by temporarily equipping the sphere $S_1$ with the round metric of radius $R_1$. This allows us to introduce a distance parameter $r$ from the point diametrically opposite the basepoint. Let us fix this parametrization, and impose it now on the Riemannian manifold $(S_1,h_1).$ Of course, this parametrization agrees (up to a shift) with the parametrization used in a neighbourhood of the basepoint in previous constructions. In particular this parametrization naturally extends throughout $(S_1\setminus D_{1,\epsilon_1},h_1) \cup {\mathcal T}_{R_1,\epsilon_1} \cup C(\rho_{R_1,\epsilon_1},\rho_{R_2,\epsilon_2}) \cup {\mathcal T}_{R_2,\epsilon_2}\cup (S_2\setminus D_{2,\epsilon_2},R_2^2ds^2_n)$ and, moreover, agrees up to a shift with the parametrization on $S_2$ (or rather on $S_2\setminus D_{2,\epsilon_2}$) which results from measuring distance from the basepoint of $S_2$ with respect to $R_2^2ds^2_n.$

With respect to the distance parameter from the basepoint on $(S_2,R_2^2ds^2_n)$, let $\delta_2(R_2,\epsilon_2)$ denote the parameter value at the boundary of $D_2$. Similarly, let $\delta_1$ denote the value of $r$ at the boundary of $(S_1\setminus D_{1,\epsilon_1},h_1) \cup {\mathcal T}_{R_1,\epsilon_1} \cup C(\rho_{R_1,\epsilon_1},\rho_{R_2,\epsilon_2}) \cup {\mathcal T}_{R_2,\epsilon_2}.$ Finally, let $d_{R_1,\epsilon_1}$ be the value of $r$ at the boundary of $(S_1\setminus D_{1,\epsilon_1},h_1).$ 

The diffeomorphism we wish to define will be rotationally symmetric (with respect to standard metrics), and will thus be determined by a choice of diffeomorphism $\Delta=\Delta(R_1,\epsilon_1,R_2,\epsilon_2): [0,\delta_2] \to [0,\delta_1].$ In order for the pull-back metric to agree with the original near the boundary we will need $\Delta$ to satisfy $\Delta'(t)=1$ for $t$ close to $\delta_2,$ and in order for it to be smooth at the centre point we will also need $\Delta$ to be odd at $t=0.$ We will also arrange that $(S_1\setminus D_{1,\epsilon_1},h_1)$ undergoes compression by a uniform factor during this process. 

\begin{lemma}\label{contraction}
For any $\delta_1,\delta_2>0$ and $a \in (0,\min\{\delta_1,\delta_2\})$, there is an orientation preserving diffeomorphism $\Delta_a=\Delta_a(\delta_1,\delta_2)$ with $\Delta_a:[0,\delta_2]\to [0,\delta_1],$ $\Delta_a'(t)=1$ for $t \in [\delta_2-a/2,\delta_2],$ and $\Delta_a'(t)$ constant for $t \in [0,\delta_2-a]$ (ensuring that $\Delta_a$ is odd at $t=0$). Moreover, $\Delta_a$ depends smoothly on the parameters $\delta_1,\delta_2$ and $a$.
\end{lemma}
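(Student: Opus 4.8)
The statement asks for a smoothly-varying family of orientation-preserving diffeomorphisms $\Delta_a:[0,\delta_2]\to[0,\delta_1]$ that are affine (with slope a constant $m$) on the initial segment $[0,\delta_2-a]$, have unit slope on the terminal segment $[\delta_2-a/2,\delta_2]$, and depend smoothly on $(\delta_1,\delta_2,a)$. The natural approach is to \emph{prescribe the derivative} $\Delta_a'$ as an explicit smooth positive function and then integrate. First I would set $\Delta_a'(t) = m$ for $t\in[0,\delta_2-a]$ and $\Delta_a'(t)=1$ for $t\in[\delta_2-a/2,\delta_2]$, and on the transition interval $[\delta_2-a,\delta_2-a/2]$ interpolate between $m$ and $1$ using the fixed smooth step function $\phi$ from the excerpt: concretely, $\Delta_a'(t) = (1-\phi(\tfrac{t-(\delta_2-a)}{a/2}))\,m + \phi(\tfrac{t-(\delta_2-a)}{a/2})$. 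Because $\phi$ is $0$ near $0$ and $1$ near $1$ with all derivatives vanishing there, this glues smoothly to the two constant pieces, so $\Delta_a'$ is a smooth positive function of $t$ (once $m>0$), varying smoothly with the parameters.

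The remaining unknown is the constant $m$, which must be chosen so that $\int_0^{\delta_2}\Delta_a'(t)\,dt = \delta_1$ (this is what makes $\Delta_a$ a \emph{surjective} map onto $[0,\delta_1]$, automatically with $\Delta_a(0)=0$). Writing the integral out, $\delta_1 = m(\delta_2-a) + \int_{\delta_2-a}^{\delta_2-a/2}\Delta_a'(t)\,dt + a/2$. On the transition interval the integrand is $m$ plus $(1-m)\phi(\cdots)$, so the integral is $m\cdot(a/2) + (1-m)\cdot c_\phi \cdot (a/2)$ where $c_\phi := \int_0^1\phi(s)\,ds \in (0,1)$ is a fixed constant. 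Collecting terms, $\delta_1 = m\big(\delta_2 - a + \tfrac{a}{2} - c_\phi\tfrac a2\big) + \tfrac a2 + c_\phi\tfrac a2 = m\big(\delta_2 - \tfrac a2(1+c_\phi)\big) + \tfrac a2(1+c_\phi)$. Since $a < \delta_2$ and $1+c_\phi < 2$, the coefficient $\delta_2 - \tfrac a2(1+c_\phi)$ is strictly positive, so we may solve uniquely:
\begin{equation*}
m = m(\delta_1,\delta_2,a) = \frac{\delta_1 - \tfrac a2(1+c_\phi)}{\delta_2 - \tfrac a2(1+c_\phi)}.
\end{equation*}
This is a smooth function of $(\delta_1,\delta_2,a)$ on the given domain. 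We must also check $m>0$, which is needed for $\Delta_a'>0$ everywhere: this follows because $a<\delta_1$ forces $\delta_1 - \tfrac a2(1+c_\phi) > \delta_1 - a > 0$. Hence $\Delta_a' > 0$ throughout, so $\Delta_a$ is a strictly increasing (orientation-preserving) smooth bijection $[0,\delta_2]\to[0,\delta_1]$.

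Finally I would verify the two remaining claimed properties. Oddness at $t=0$: since $\Delta_a$ is affine with slope $m$ and $\Delta_a(0)=0$ near $t=0$, we have $\Delta_a(t)=mt$ there, which is odd — this is exactly what is needed for the rotationally-symmetric diffeomorphism of the disc to be smooth at the centre. Unit slope near $\delta_2$: built in by construction, so the pull-back metric matches the original near the boundary. Smooth dependence on parameters: $\Delta_a'$ is manifestly a smooth function of $(t,\delta_1,\delta_2,a)$ (the only parameter-dependent ingredients are the constant $m$, which is smooth as shown, and the rescaling $\tfrac{t-(\delta_2-a)}{a/2}$ inside $\phi$, which is smooth since $a>0$), and $\Delta_a(t)=\int_0^t\Delta_a'(\tau)\,d\tau$ inherits smooth dependence by differentiation under the integral sign. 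I do not anticipate a serious obstacle here; the only point requiring a little care is confirming the sign conditions (coefficient positivity and $m>0$), which is where the hypotheses $a<\delta_2$ and $a<\delta_1$ together with $c_\phi\in(0,1)$ get used, and making sure the gluing at the two junctions of the transition interval is genuinely $C^\infty$ — which is guaranteed by the flatness of $\phi$ at $0$ and $1$.
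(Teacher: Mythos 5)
Your proof is correct and follows essentially the same strategy as the paper: prescribe $\Delta_a'$ to be piecewise constant with a smooth transition governed by the fixed bump function $\phi$, then solve for the free constant slope so that $\int_0^{\delta_2}\Delta_a'=\delta_1$. The only real difference is cosmetic: the paper works with the reversed map $\Phi_a(t)=\Delta_a(\delta_2-t)$ and invokes the intermediate value theorem to assert the existence of the constant slope $c$, whereas you work directly with $\Delta_a$ and solve the (linear) constraint explicitly, getting a closed-form $m=\bigl(\delta_1-\tfrac{a}{2}(1+c_\phi)\bigr)\big/\bigl(\delta_2-\tfrac{a}{2}(1+c_\phi)\bigr)$ together with direct positivity checks. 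Your route is, if anything, slightly cleaner, since the explicit formula makes the smooth dependence on $(\delta_1,\delta_2,a)$ and the sign conditions immediate rather than implicit in an IVT argument.
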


\begin{proof}
We begin by constructing a function $\Phi_a(t):[0,\delta_2] \to [0,\delta_1]$ depending smoothly on $\delta_1,\delta_2,a,$ with $\Phi_a(0)=\delta_1,$ $\Phi_a(\delta_2)=0,$ $\Phi'_a<0,$ $\Phi'_a=-1$ for $t \in [0,a/2]$, and $\Phi'_a=c,$ some $c \in (-\infty,0]$, for $t \in [a,\delta_2]$. Having established the existence of $\Phi_a,$ we set $\Delta_a(t):=\Phi_a(\delta_2-t).$ It is easily checked that $\Delta_a$ then satisfies the required properties.


Recall the bump function $\phi:[0,1]\to [0,1]$ defined before Lemma \ref{C2_to_smooth}. Extend this in the obvious way (i.e. with constant value 1 for $t \ge 1$ and 0 for $t \le 0$) to a function $\bar{\phi}:{\mathbb R} \to {\mathbb R}.$ For $c \in (-\infty,0]$ and $t \in {\mathbb R}^+$ we set
\begin{equation}
\Theta_a(t):=\delta_1+\int_0^t (-1)\Bigl[1-\bar{\phi}\Bigl(\frac{u-a/2}{a/2}\Bigr)\Bigl]+c\bar{\phi}\Bigl(\frac{u-a/2}{a/2}\Bigr) \,\,du.
\end{equation}
It is clear that $\Theta_a$ depends smoothly on $\delta_1,$ $a$ and $c$. Moreover we have $\Theta_a(0)=\delta_1,$ $\Theta'_a(t)=-1$ for $t \in [0,a/2],$ $\Theta'_a(t)=c$ for $t \ge a,$ and $\Theta'_a<0$ for all $t$ when $c<0.$ Now for $c=0$ we clearly have $\Theta_a(\delta_2)>0$, and for $c$ sufficiently large and negative we will have $\Theta_a(\delta_2)<0.$ Therefore by the intermediate value theorem there exists a (unique) value of $c$ for which $\Theta_a(\delta_2)=0.$ Set $\Phi_a$ to be the restriction of $\Theta_a$ for this particular value of $c$ with domain $[0,\delta_2]$ and target $[0,\delta_1].$
\end{proof}

For our purposes $\delta_1,\delta_2$ will depend smoothly on $R_i,\epsilon_i$, as indicated previously. For technical reasons later on, we will need $\Delta_a'(t)$ to be constant for $t \in [0,\Delta_a^{-1}(d_{R_{1,\epsilon_1}})].$ As $\Delta_a'$ is constant by construction for $t \in [0,\delta_2-a],$ we then need to arrange for $a < \delta_2-\Delta_a^{-1}(d_{R_{1,\epsilon_1}}).$ The point of this is to ensure that the map $\Delta$ we will construct compresses $(S_1\setminus D_{1,\epsilon_1},h_1)$ by a uniform factor. (We will need to undo this effect at some point, and the uniformity makes the process straightforward.) 

\begin{cor}
There exists $a_0=a_0(R_1,\epsilon_1,R_2,\epsilon_2)>0$ depending continuously on $R_i,\epsilon_i$, such that for all $a \in (0,a_0)$ we have  $a < \delta_2-\Delta_a^{-1}(d_{R_{1,\epsilon_1}}).$
\end{cor}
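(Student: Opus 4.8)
The statement to prove is the final Corollary:

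\begin{proof}[Proof proposal]
The plan is to reduce everything to a continuity/monotonicity argument about the function $a \mapsto \delta_2 - \Delta_a^{-1}(d_{R_1,\epsilon_1})$ and then invoke the intermediate value theorem together with the smooth dependence on parameters already established in Lemma \ref{contraction}. First I would fix the parameters $R_1,\epsilon_1,R_2,\epsilon_2$ (and hence $\delta_1,\delta_2$ and $d := d_{R_1,\epsilon_1}$, all of which depend smoothly on these) and study the behaviour of $\Delta_a^{-1}(d)$ as $a \to 0^+$. Recall from the proof of Lemma \ref{contraction} that $\Delta_a(t) = \Phi_a(\delta_2 - t)$ where $\Phi_a$ is the restriction of $\Theta_a$ to $[0,\delta_2]$ for the unique value $c = c(a)$ making $\Theta_a(\delta_2) = 0$; and $\Theta_a'(u) = -1$ on $[0,a/2]$, $\Theta_a'(u) = c$ for $u \ge a$, with $\Theta_a' < 0$ throughout when $c < 0$. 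The key observation is that as $a \to 0^+$, the transition region $[a/2,a]$ shrinks to a point, so $\Theta_a$ converges (uniformly on $[0,\delta_2]$) to the piecewise-linear function with slope $c_0$ on all of $(0,\delta_2]$ where $c_0 = -\delta_1/\delta_2$; that is, $c(a) \to -\delta_1/\delta_2 < 0$. Consequently $\Delta_a$ converges uniformly to the linear map $t \mapsto (\delta_1/\delta_2)\,t$, whose inverse sends $d$ to $(\delta_2/\delta_1)\,d$.

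Next I would use this to control $\delta_2 - \Delta_a^{-1}(d)$ in the limit $a \to 0^+$. Since $\Delta_a^{-1}(d) \to (\delta_2/\delta_1)\,d$ and $d = d_{R_1,\epsilon_1} < \delta_1$ strictly (the boundary of $(S_1 \setminus D_{1,\epsilon_1}, h_1)$ lies strictly before the boundary of the longer assembled piece, which also contains the tube ${\mathcal T}_{R_1,\epsilon_1}$ and the connecting piece), we get $(\delta_2/\delta_1)\,d < \delta_2$, hence
\[
\lim_{a \to 0^+} \bigl(\delta_2 - \Delta_a^{-1}(d)\bigr) = \delta_2 - \frac{\delta_2}{\delta_1}\,d = \delta_2\Bigl(1 - \frac{d}{\delta_1}\Bigr) > 0.
\]
Since the left-hand quantity $a$ tends to $0$, while the right-hand quantity tends to a strictly positive number, the inequality $a < \delta_2 - \Delta_a^{-1}(d)$ holds for all $a$ small enough. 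To make this precise, note that $(a,c) \mapsto \Theta_a$ varies continuously (indeed smoothly for $a > 0$), that the implicit equation $\Theta_a(\delta_2) = 0$ has a unique solution $c(a)$ which (by the implicit function theorem, the $c$-derivative of $\Theta_a(\delta_2)$ being $\int_0^{\delta_2}\bar\phi(\tfrac{u-a/2}{a/2})\,du > 0$) depends continuously on $a$ and extends continuously to $a = 0$, and that $d \mapsto \Delta_a^{-1}(d)$ is continuous jointly in $(a,d)$ because $\Delta_a$ is a strictly monotone diffeomorphism depending continuously on $a$. Thus $F(a) := \delta_2 - \Delta_a^{-1}(d)$ extends to a continuous function on $[0,\min\{\delta_1,\delta_2\})$ with $F(0) > 0$, so there is $a_0 > 0$ with $a < F(a)$ for all $a \in (0,a_0)$; one may take $a_0 = \min\{a_1, \tfrac12 F(0)\}$ where $a_1$ is chosen so that $F(a) > \tfrac12 F(0)$ for $a \le a_1$.

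Finally, for the claimed continuous dependence of $a_0$ on $R_i,\epsilon_i$: all the ingredients above — $\delta_1, \delta_2, d_{R_1,\epsilon_1}$, the family $\Theta_a$, the solution $c(a)$, the inverse $\Delta_a^{-1}$ — depend continuously (in fact smoothly, away from $a = 0$) on $R_i,\epsilon_i$ as established in Lemma \ref{tube}, the definition of the connecting piece, and Lemma \ref{contraction}. Hence the two-variable function $(a; R_i,\epsilon_i) \mapsto F(a)$ is jointly continuous, $F(0; R_i,\epsilon_i) > 0$ everywhere, and the prescription $a_0 = \min\{a_1, \tfrac12 F(0)\}$ can be carried out continuously in the parameters (choosing $a_1$, e.g., via a uniform modulus of continuity on compact parameter ranges, or simply noting the construction is local in the parameters). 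This yields $a_0 = a_0(R_1,\epsilon_1,R_2,\epsilon_2) > 0$ depending continuously on $R_i,\epsilon_i$ with the desired property.

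The main obstacle I anticipate is pinning down the limiting behaviour of $\Delta_a^{-1}(d)$ as $a \to 0^+$ rigorously — i.e. verifying that $c(a)$ converges to $-\delta_1/\delta_2$ and that this convergence is uniform enough to conclude $\Delta_a^{-1}(d) \to (\delta_2/\delta_1)d$ — and then tracking the continuous dependence of the resulting $a_0$ on the geometric parameters through this limit. Once the identity $F(0) = \delta_2(1 - d/\delta_1) > 0$ is in hand, the rest is a routine application of continuity and the intermediate value theorem.
\end{proof}
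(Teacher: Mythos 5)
Your proposal takes essentially the same approach as the paper: identify the pointwise limit $\Delta_a \to \Delta_0(t) = (\delta_1/\delta_2)\,t$ as $a \to 0^+$, compute $\lim_{a\to 0^+}\bigl(\delta_2 - \Delta_a^{-1}(d_{R_1,\epsilon_1})\bigr) = \delta_2(1 - d_{R_1,\epsilon_1}/\delta_1) > 0$, and conclude by continuity that $a < \delta_2 - \Delta_a^{-1}(d_{R_1,\epsilon_1})$ for all sufficiently small $a$, with the parameter dependence inherited from the smooth dependence of $\delta_1,\delta_2,d_{R_1,\epsilon_1}$ on $R_i,\epsilon_i$. Your version is somewhat more explicit than the paper's (invoking the implicit function theorem for $c(a)$ and giving a concrete choice $a_0 = \min\{a_1, \tfrac12 F(0)\}$ rather than a supremum), but the core argument is the same.
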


\begin{proof}
It is easily checked that as $a \to 0,$ the diffeomorphism $\Delta_a$ converges pointwise to the diffeomorphism $\Delta_0(t):=t\delta_1/\delta_2.$ Since $\Delta_0^{-1}(d_{R_{1,\epsilon_1}})=d_{R_{1,\epsilon_1}}\delta_2/\delta_1$, by choosing $a$ sufficiently small we can ensure that $\delta_2-\Delta_a^{-1}(d_{R_{1,\epsilon_1}})$ is arbitrarily close to $\delta_2(1-d_{R_{1,\epsilon_1}}/\delta_1)>0.$ Thus the inequality $a < \delta_2-\Delta_a^{-1}(d_{R_{1,\epsilon_1}})$ will hold for all $a$ sufficiently small. Set $a_0$ to be the least upper bound of the set of $a\in (0,\min\{\delta_1,\delta_2\})$ such that this inequality holds. It is clear that $a_0$ depends at least continuously on $R_1,$ $\epsilon_1,$ $\delta_1$ and $\delta_2.$ But the $\delta_i$ depend smoothly on $R_i$ and $\epsilon_i$, hence $a_0=a_0(R_1,\epsilon_1,R_2,\epsilon_2)$ as claimed.
\end{proof} 

Let us choose and fix any smooth function $a(R_1,\epsilon_1,R_2,\epsilon_2)$ such that $$0<a(R_1,\epsilon_1,,R_2,\epsilon_2)<a_0(R_1,\epsilon_1,R_2,\epsilon_2)$$ for all $R_i>0,$ $\epsilon_i \in (0,R_i).$
\begin{definition}\label{Delta}
Let $\Delta=\Delta(R_1,\epsilon_1,R_2,\epsilon_2)$ be given by $\Delta=\Delta_{a(R_1,\epsilon_1,R_2,\epsilon_2)}.$ Denote by $\bar{\Delta}$ the rotationally symmetric diffeomorphism determined by $\Delta$.
\end{definition}

Thus $\bar{\Delta}$ maps $D_{2,\epsilon_2} \subset S_2$ onto the disc comprising the connected sum arrangement involving $S_1$. Pulling back the metric via $\bar{\Delta}$ then, in effect, contracts the connected sum onto $D_{2,\epsilon_2}.$ We will refer to $\bar{\Delta}$ as the {\it contracting map}, (even though, technically speaking, it is $\bar{\Delta}^{-1}$ that is actually a contracting map).

Note that for the remainder of this section, unless stated otherwise, it should be assumed that a disc $D^n$ removed from a sphere $S^n$ is a disc centred on the basepoint within a constant curvature region for which the boundary sphere has intrinsic radius 1. 

\begin{remark}
In order to complete Definition \ref{H_mult} we have to specify the metric $\sigma(h_1,h_2)$. We obtain $\sigma(h_1,h_2)$ from $\hat{\sigma}(h_1,h_2)$ by applying the connected sum contraction procedure twice, once for each of the metrics $h_1$ and $h_2$. In each case, the sphere $S_2$ in the connected sum contraction should be taken to be the docking station sphere $(S^n,100^2ds^2_n)$. The disc $D_2$ will then be centred on the point $z_1$ for attaching $h_1,$ and on $z_2$ for connecting $h_2$. Thus the points $z_1,z_2$ should temporarily be considered the basepoints of the respective discs $D_2$ for the purposes of the above construction. Also, notice that the locations of the connected sum operations in Definition \ref{H_mult} are far-removed from the actual basepoint $x_0$ (in the docking station), and this ensures that the resulting metric still belongs to $\rdok(S^n).$
\end{remark}

\begin{remark}\label{id_ending} 
When we prove the existence of a homotopy identity element below, two further observations about the connected sum contraction procedure will be relevant. Firstly, in the special case of Lemma \ref{contraction} where $\delta_1=\delta_2$, for all $a  \in (0,\delta_1)$ the relevant value of $c$ is $-1$, and the resulting diffeomorphism $\Delta:[0,\delta_1] \to [0,\delta_1]$ is simply the identity map. Secondly, we can apply the contraction idea to contract any rotationally symmetric disc onto another, provided the given metrics agree near the boundaries of the respective discs. Thus given a smoothly varying family of such discs described by a parameter $s$, with metrics fixed in a neighbourhood of the boundary, and with the disc radius given by a function $\delta_1(s),$ we can use the corresponding family of diffeomorphisms $\Delta(s)$ to pull back to $D_2$.
\end{remark}

\begin{proposition}
\label{H_commute}
The binary operation $\sigma$ is homotopy commutative.
\end{proposition}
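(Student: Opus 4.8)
The plan is to show that swapping the roles of $h_1$ and $h_2$ in Definition \ref{H_mult} produces a metric which is isometric, via an ambient diffeomorphism of $S^n$ isotopic to the identity, to the original $\sigma(h_1,h_2)$, and that this isotopy can be chosen to depend continuously (indeed smoothly) on the pair $(h_1,h_2)$. The geometric input is that the docking station $S^n(100)$ admits an isometry $\iota$ which fixes the north pole $x_0$ and interchanges the two equatorial points $z_1$ and $z_2$ (a rotation by $\pi$ about the axis through $x_0$ and its antipode, composed if necessary with a reflection so as to match orientations and the fixed tangent frame at $x_0$). Since the tubes ${\mathcal T}_{R_1},{\mathcal T}_{R_2}$, the connecting pieces $C(\rho_{R_i},\rho_{100})$, the copies of ${\mathcal T}_{100}$, and the attaching construction all depend only on the data $R_1,R_2$ and are glued on in the same rotationally symmetric fashion at $z_1$ and $z_2$, the isometry $\iota$ of the docking station extends to an isometry between $\hat{\sigma}(h_1,h_2)$ and $\hat{\sigma}(h_2,h_1)$: it carries the whole attached arrangement over $z_1$ (built from $h_1$) to the arrangement over $z_2$ (built from $h_1$) and vice versa.

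Next I would descend this isometry through the connected sum contraction procedure. Here one must be careful, because $\sigma(h_1,h_2)$ is not literally $\hat{\sigma}(h_1,h_2)$ but its image under contracting both attached arrangements onto small discs $D_2$ around $z_1$ and $z_2$ inside the docking station sphere, via the rotationally symmetric diffeomorphisms $\bar\Delta$ of Definition \ref{Delta}. Since $\bar\Delta$ is determined purely by the parameters $R_i,\epsilon_i$ (with $\epsilon_i=1$ here) and is rotationally symmetric with respect to the standard metrics, and since $\iota$ is an isometry of the docking station carrying $D_2$ around $z_1$ to $D_2$ around $z_2$ respecting the radial parametrisations, the contraction procedure commutes with $\iota$. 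Consequently $\iota$ induces a diffeomorphism $\Psi_{h_1,h_2}\colon S^n \to S^n$ with $\Psi_{h_1,h_2}^*\bigl(\sigma(h_2,h_1)\bigr)=\sigma(h_1,h_2)$, and $\Psi_{h_1,h_2}$ fixes a neighbourhood of the basepoint $x_0$ together with the chosen frame, so $\Psi_{h_1,h_2}^*$ maps $\rdok(S^n)$ to itself. Moreover $\Psi_{h_1,h_2}$ depends smoothly on $(h_1,h_2)$ through the smooth dependence of all the constituent constructions on $R_1,R_2$.

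It remains to produce a homotopy $m\simeq m\circ S$ where $m=\sigma$ and $S$ is the swapping map. I would take $\Psi_{h_1,h_2}$ — a rotation of $S^n$ fixing $x_0$ — and choose a smooth isotopy $\Psi^{(s)}_{h_1,h_2}$, $s\in[0,1]$, from the identity ($s=0$) to $\Psi_{h_1,h_2}$ ($s=1$), through diffeomorphisms each fixing a neighbourhood of $x_0$ and the frame; such an isotopy exists canonically because $\Psi_{h_1,h_2}$ lies in the identity component of $\Diff(S^n)$, and indeed can be taken to be a path of rotations (rotating the $z_1$--$z_2$ axis back to itself) depending smoothly on the parameters. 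Then $(h_1,h_2,s)\mapsto \bigl(\Psi^{(s)}_{h_1,h_2}\bigr)^*\sigma(h_2,h_1)$ is a homotopy in $\rdok(S^n)$ from $\sigma\circ S$ to $\sigma$, establishing homotopy commutativity. I expect the main obstacle to be the bookkeeping verifying that the contraction map $\bar\Delta$ genuinely intertwines with $\iota$ — i.e. that swapping and then contracting gives the same metric on the standard $S^n$ as contracting and then applying $\iota$ — since this requires matching up the radial distance parameters $\delta_1,\delta_2,d_{R_1,\epsilon_1}$ on both sides and checking the chosen smooth function $a(R_1,\epsilon_1,R_2,\epsilon_2)$ behaves symmetrically under the relevant relabelling; once the correct ambient isometry of the docking station is identified, however, this is a routine consequence of the rotational symmetry built into every step of Definition \ref{H_mult}.
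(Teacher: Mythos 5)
Your argument is correct and is essentially the paper's proof, which consists of the single observation that rotating $S^n$ by angle $s\pi$ about the north--south axis (so that at $s=1$ the equatorial points $z_1,z_2$ are swapped) yields a path of pull-back metrics in $\rdok(S^n)$ linking $\sigma(h_2,h_1)$ to $\sigma(h_1,h_2)$, smoothly in $(h_1,h_2)$. One small inaccuracy in your write-up: these rotations do not fix a neighbourhood of $x_0$ pointwise, nor the frame $\{e_i\}$ (which in any case plays no role in the definition of $\rdok(S^n)$); what one actually needs, and what holds, is that each rotation is an isometry of the round metric near $x_0$, so the pull-backs remain in $\rdok(S^n)$.
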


\begin{proof}
Consider arbitrary metrics $h_1,h_1 \in \rdok(S^n).$ We need to show that $\sigma(h_1,h_2) \simeq \sigma(h_2,h_1).$
To achieve this, simply rotate $S^n$ about its north-south axis so as to swap the positions of $z_1$ and $z_2$. This clearly carries the metric $\sigma(h_1,h_2)$ smoothly to the metric $\sigma(h_2,h_1)$ as required.
\end{proof}


\begin{proposition}
\label{identity}
The round metric $100^2ds^2_n$ is a homotopy identity element for the binary operation $\sigma.$
\end{proposition}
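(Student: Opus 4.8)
The plan is to show that for any $h \in \rdok(S^n)$ we have $\sigma(100^2ds^2_n, h) \simeq h$ (the other factor follows by homotopy commutativity, Proposition \ref{H_commute}). The key conceptual point is that when one of the input metrics is the round metric of radius $100$, the entire elaborate connected-sum arrangement $\hat\sigma(100^2ds^2_n,h)$ is, up to a diffeomorphism that we must choose smoothly in $h$, just a (mildly distorted) copy of $(S^n,h)$ itself. Concretely, plugging in $h_1 = 100^2ds^2_n$, the sphere $S_1$ with a round disc removed, then a tube $\mathcal T_{100}$, then a connecting piece $C(\rho_{100},\rho_{100})$ — which by the Remark after the Corollary preceding Definition \ref{H_mult} is just a straight cylinder $dt^2 + \rho_{100}^2 ds^2_{n-1}$ — then another $\mathcal T_{100}$, glued into the $z_1$-boundary of the docking station: all of these pieces fit together into a rotationally symmetric neck attached to a round sphere, and the whole thing is a round-at-the-ends object that one can reparametrize back to a round $S^n(100)$. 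So the first task is to observe that $\hat\sigma(100^2ds^2_n, h)$ is isometric (or at least, after applying the contraction of Definition \ref{Delta}, diffeomorphic via a smoothly-varying diffeomorphism) to a metric which, near $z_2$, agrees with the original $h$-arrangement and, away from $z_2$, is simply round of radius $100$; i.e. it lies in $\rdok(S^n)$ and differs from $\sigma$(round, round) composed with a ``plugging in $h$'' only in a controlled collar.

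The main technical engine will be the \emph{warped product deformation procedure} alluded to in the introduction, combined with the contraction remarks in Remark \ref{id_ending}. Here is the sequence of steps I would carry out. (1) Reduce to showing $\sigma(100^2ds^2_n,h)\simeq h$ using Proposition \ref{H_commute}. (2) Unpack $\hat\sigma(100^2ds^2_n,h)$: on the $h_1$-side everything is round of radius $100$ (punctured sphere, tube $\mathcal T_{100}$, cylinder connecting piece $C(\rho_{100},\rho_{100})$, second copy of $\mathcal T_{100}$) so this whole side is, isometrically, a round spherical cap of radius $100$ with a round collar — it assembles with the docking station into a genuine round region. The only ``non-round'' data is the $h_2 = h$ side: its punctured sphere, its tube $\mathcal T_{R_2}$, its connecting piece $C(\rho_{R_2},\rho_{100})$, and a $\mathcal T_{100}$. (3) Apply the connected sum contraction (Definition \ref{H_mult}, last Remark) twice to identify $\hat\sigma$ with a metric $\sigma(100^2ds^2_n,h)$ on the standard $S^n$, tracking that on the $h_1$-side the contracting diffeomorphism is (by the first observation in Remark \ref{id_ending}, since $\delta_1 = \delta_2$ when the $h_1$-arrangement is already round of radius $100$) essentially the identity, so no distortion is introduced there. (4) Now construct a homotopy from $\sigma(100^2ds^2_n,h)$ to $h$: deform the $h$-side arrangement — tube, connecting piece, extra $\mathcal T_{100}$ — back into the round disc it replaced. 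This is exactly a ``warped product deformation'': one shrinks the length parameters $\ell_{R},\lambda_\infty$ down and straightens the warping function back to $R\cos(r/R)$, all the while keeping $Ric_k>0$ (which by the warped-product formulas (\ref{ricwarp}) and the smoothing Corollary \ref{smoothing} survives such monotone deformations), and uses the family version of the contraction (second observation in Remark \ref{id_ending}, with disc radius $\delta_1(s)$) to keep reidentifying with the standard sphere along the way. At the end of the deformation the $h$-side is just the original round disc plugged back in, i.e. we recover $(S^n,h)$.

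I expect the main obstacle to be step (4): producing an explicit, $Ric_k>0$-preserving, smoothly-parametrized deformation that collapses the tube-plus-connecting-piece-plus-$\mathcal T_{100}$ appendage back to a round cap, \emph{while simultaneously} reconciling the global reparametrization so that at every stage we have an honest metric on the standard $S^n$ varying continuously in both $h$ and the deformation parameter $s$. The tube $\mathcal T_R$ was built (Lemma \ref{tube}) via a solution of the ODE in Lemma \ref{E_F} and then smoothed; there is no reason its warping function interpolates back to $R\cos(r/R)$ through $Ric_2>0$ metrics in an obvious linear way, so one likely has to rebuild the family: use Lemma \ref{E_F}/Lemma \ref{stretch}-style constructions with a parameter $s$ interpolating the ODE data and the final radius, invoke Corollary \ref{smoothing} uniformly over the compact $s$-interval (as in the proof of Proposition \ref{prop:rounding}), and glue using the family contraction of Remark \ref{id_ending}. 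The bookkeeping to ensure the identity endpoint is \emph{exactly} $h$ (not merely $h$ up to a further diffeomorphism) — matching radii, lengths, and the oddness/derivative-one conditions on the contracting diffeomorphisms at $s=1$ — is where the delicacy the authors warn about really lives, but no single sub-step is deep; it is an accumulation of careful continuity-in-$h$ arguments of exactly the type already established in Section 2 and in the Tube Lemma.
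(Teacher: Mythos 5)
Your proposal contains a concrete error at its heart, and several of the paper's steps are missing as a consequence. The central mistake is in step (2), where you assert that the $e$-side of $\hat{\sigma}(e,h)$ --- punctured sphere, $\mathcal{T}_{100}$, cylinder $C(\rho_{100},\rho_{100})$, second $\mathcal{T}_{100}$ --- is ``round of radius $100$'' throughout and assembles isometrically into a round spherical cap. This is false: by construction the tube $\mathcal{T}_{100}$ is round of radius $100$ only near its wide end ($r\in[\tau_{100},\kappa_{100}\tau_{100}]$); through its interior the warping function follows the ODE solution of Lemma \ref{E_F}, and near the narrow end it becomes a cylinder of radius $\rho_{100}$. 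That is the whole point of the Tube Lemma --- the tube provides a $Ric_2>0$ interpolation from a round collar down to a thin cylinder, and nothing about choosing $R=100$ makes it a piece of a round sphere. Consequently your step (3) is also wrong: you claim $\delta_1=\delta_2$ in the contraction because ``the $h_1$-arrangement is already round of radius $100$'', but the length $\delta_1$ of the $e$-arrangement (punctured sphere plus two tubes plus connecting piece) is substantially larger than the radius $\delta_2$ of the excised disc in the docking station, so the contraction is not the identity. The special case $\delta_1=\delta_2$ of Remark \ref{id_ending} only comes into force after the $e$-appendage has been actively deformed away --- which is exactly what the paper's Step 1 does, via the warped product deformation procedure of Lemma \ref{blemish} and Remark \ref{start}.

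Two further omissions: you do not slide the $h$-arrangement from $z_1$ to the south pole (the paper's Step 2), which is what keeps the deformation out of the basepoint's neighbourhood and lets Steps 3--4 proceed without colliding with $x_0$; and you do not address the basepoint radius constraint (the paper's Step 5). During the warped-product collapse of the docking station back onto $(S^n,h)$ the round region about $x_0$ shrinks, and without an auxiliary progressive rescaling by a factor $\beta(R)$ there is no guarantee the path of metrics stays in $\rdok(S^n)$ --- i.e.\ that a distance sphere of intrinsic radius $1$ about $x_0$ survives at every stage. Your step (4) does correctly anticipate that a warped-product deformation is the right engine for collapsing the $h$-side tube-plus-connecting-piece back to a round cap, but the same machinery is needed on the $e$-side, and both deformations require the scaling and sliding bookkeeping you have left out. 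As written the proposal does not yield a homotopy through $\rdok(S^n)$.
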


The proof of this proposition comes down to showing that for an arbitrary $h\in \rdok(S^n)$ we can construct a $\Ric_{2}>0$ isotopy which moves this metric to the metric $\hat{\sigma}(h,g_{100})$ in a way which fixes all of $h$ outside the standard round region; see Fig. \ref{HomotIdFirst} for a depiction of these metrics. Such an isotopy is easily shown to give rise to the desired homotopy equivalence. Before continuing with a proof of this Proposition, we first make a construction which will be crucial.

\begin{figure}[!htbp]
\vspace{0cm}
\hspace{0.5cm}
\begin{picture}(0,0)
\includegraphics{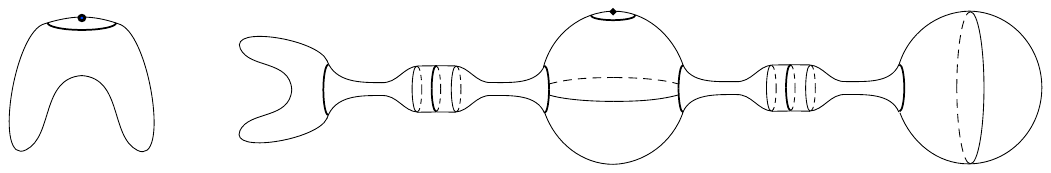}%
\end{picture}
\setlength{\unitlength}{3947sp}
\begingroup\makeatletter\ifx\SetFigFont\undefined%
\gdef\SetFigFont#1#2#3#4#5{%
  \reset@font\fontsize{#1}{#2pt}%
  \fontfamily{#3}\fontseries{#4}\fontshape{#5}%
  \selectfont}%
\fi\endgroup%
\begin{picture}(5079,1559)(1902,-7227)
\put(4300,-6200){\makebox(0,0)[lb]{\smash{{\SetFigFont{10}{8}{\rmdefault}{\mddefault}{\updefault}{\color[rgb]{0,0,0}$\sigma(h,g_{100})$}%
}}}}
\put(2200,-6200){\makebox(0,0)[lb]{\smash{{\SetFigFont{10}{8}{\rmdefault}{\mddefault}{\updefault}{\color[rgb]{0,0,0}$h$}%
}}}}
\end{picture}%
\caption{The arbitrary metric $h\in\rdok(S^n)$ and the metric $\hat{\sigma}(h,g_{100})$}
\label{HomotIdFirst}
\end{figure}

\noindent{\bf Warped product deformation procedure.} The aim is to provide a method for smoothly deforming a disc equipped with a warped product metric $dt^2+\mu^2(t)ds^2_{n-1}$ into a round disc, preserving the $Ric_2>0$ condition. To this end, we consider a smooth, positive function $\mu(t),$ $t \in [t_0,L]$ for some $0 \le t_0 <L$, which satisfies the $Ric_2>0$ inequality (\ref{Ric2Ineq}) for all $t$ (with the derivatives understood in a one-sided sense at the end of the domain interval). We will also assume that $\mu'\in (-1,1)$. Here we view $t=t_0$ as corresponding to the {\it boundary} of the disc, and $t=L$ as the centre. In order for the proposed deformation to end with a round disc, it is neccessary for $dt^2+\mu^2(t)ds^2_{n-1}$ to be round near the disc boundary.

We will construct a smooth one-parameter family of smooth functions $\gamma_s(t)$ based on the function $\mu$. Here $s \in [\tau_0,L_0]$ for some $t_0 <\tau_0<L_0 \le L,$ and the domain of $\gamma_s(t)$ is $[t_0,L(s)]$ where $L(s)\in [s,\infty)$ varies smoothly with $s$. The idea is that the resulting warped products $([t_0,L(s)] \times S^{n-1}, dt^2+\gamma^2_s(t)ds^2_{n-1})$ interpolate smoothly between the original disc and a round disc. Moreover, we will arrange for this one-parameter family of metrics to be constant with respect to $s$ near the disc boundary.

We proceed as follows. We first construct a $C^1$-approximation $\bar{\gamma}_s$ to the desired function $\gamma_s$. We then apply earlier smoothing results to smooth $\bar{\gamma}_s$ to $\gamma_s$ over an interval, and in a manner, which depends only on $s$ and the original function $\mu(t).$

We begin by setting $$\bar{\gamma}_s(t)=
\begin{cases} 
\mu(t) & \text{ if } t \le s \\
\frac{\mu(s)}{\sqrt{1-\mu'^2(s)}}\cos\Bigl(\frac{\sqrt{1-\mu'^2(s)}}{\mu(s)}(t-s)+\sin^{-1}(-\mu'(s))\Bigr) & \text{ if } t\ge s.
\end{cases}
$$

It is easily checked that $\gamma_s$ is a piecewise smooth function, which is $C^1$ at $t=s,$ its only non-smooth point. We will set the upper domain parameter for $\gamma_s$, $L(s)$, to be equal to the smallest value of $t\ge s$ for which the above cosine expression has a zero. This clearly varies smoothly with $s$.

It is automatic that for $t\in [t_0,s)$ and for $t\in(s,L(s)]$, $\bar{\gamma}_s$ satisfies the $Ric_2>0$ inequality (\ref{Ric2Ineq}): in the former case this is an assumption on $\mu$, and in the later this is immediate since the second derivative of the cosine function is negative.

By Corollary \ref{smoothing}, for each $s$ there exists a number $\alpha_0(s),$ maximal in the interval $(0,\min\{(\tau_0-t_0)/2,(L(s)-s)/2\}],$ such that for any $\alpha \in (0,\alpha_0(s))$, the function $\bar{\gamma}_s$ can be smoothed over the interval $t \in (s-\alpha,s+\alpha)$ in such a way that the resulting function still satisfies the $Ric_2>0$ inequality (\ref{Ric2Ineq}). Moreover $\alpha_0(s)$ varies continuously with $s$. 
(Note that the value of $\alpha_0$ is chosen so that $t_0<s-\alpha<s+\alpha<L(s)$ for all $s \in [\tau_0,L_0].$) 
As $[\tau_0,L_0]$ is compact, $\min_{s \in [\tau_0,L_0]} \alpha_0(s)$ exists. If we set $\alpha=\frac{1}{2}\min \alpha_0(s)$, then the
smoothing can be performed over the interval $t \in (s-\alpha,s+\alpha)$ for every $s \in [\tau_0,L_0].$ Let $\gamma_s(t)$ be the family of functions resulting from this smoothing over intervals of length $2\alpha.$ This family is clearly smoothly dependent on $s$.

In the proof of Proposition \ref{identity}, and again in Section \ref{LSS}, we will apply this construction to the situation where the function $\mu$ is a warped product scaling function which depends smoothly on the parameters $R,\epsilon$, as will $t_0$ and $\tau_0$. It follows easily that the corresponding number $\alpha$ will vary continuously with respect to $R,\epsilon$. Let us denote this $\alpha_{R,\epsilon}$. We can therefore choose a smooth function $\alpha(R,\epsilon)$ with $0<\alpha(R,\epsilon)<\alpha_{R,\epsilon}$ for $R>0$, $\epsilon \in (0,R)$, and using this function to dictate the smoothing, we obtain a family of smooth functions $\gamma_s=\gamma_{s,R,\epsilon}$ which satisfy the inequality (\ref{Ric2Ineq}) and vary smoothly with respect to $s,$ $R$ and $\epsilon$. 
\begin{obs}\label{observation}
For any $s \in [\tau_0,L_0],$ the corresponding warped product metric $dt^2+\gamma^2_s(t)ds^2_{n-1}$ is round for $t \in [s+\alpha(R,\epsilon),L(s)].$ 
Consequently, this round region contains a distance sphere about the end-point with intrinsic radius $\epsilon':=\gamma_s(s+\alpha(R,\epsilon)),$  and the dependence of $\epsilon'$ on $R,\epsilon,s$ is smooth. 
\end{obs}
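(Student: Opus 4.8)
The plan is to read both assertions directly off the construction of $\gamma_s$, using only that the $C^1$-to-$C^\infty$ smoothing turning $\bar\gamma_s$ into $\gamma_s$ leaves the function unchanged outside the open interval $(s-\alpha(R,\epsilon),s+\alpha(R,\epsilon))$. First I would record that this gives $\gamma_s\equiv\bar\gamma_s$ on $[s+\alpha(R,\epsilon),L(s)]$, and that on this interval (which lies inside $[s,L(s)]$) the function $\bar\gamma_s$ is the cosine expression
\[
\bar\gamma_s(t)=\frac{\mu(s)}{\sqrt{1-\mu'^2(s)}}\cos\Bigl(\tfrac{\sqrt{1-\mu'^2(s)}}{\mu(s)}(t-s)+\sin^{-1}(-\mu'(s))\Bigr),
\]
which is well defined since $\mu(s)>0$ and $|\mu'(s)|<1$ by the standing hypotheses on $\mu$; here $L(s)$ is by definition the first zero of this cosine to the right of $s$, where one computes $\bar\gamma_s(L(s))=0$ and $\bar\gamma_s'(L(s))=-1$.

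The one computation needed is that a warped product $dt^2+f^2(t)\,ds^2_{n-1}$ with $f(t)=A\cos(Bt+C)$ and $AB=1$ has constant sectional curvature $B^2$. This is immediate from (\ref{ricwarp}): since $f''=-B^2f$ and $1-f'^2=B^2f^2$, formula (\ref{ricwarp}) gives $\Ric=(n-1)B^2g$, and in fact the relations $f''=-B^2f$ and $f'^2+B^2f^2=1$ are exactly the conditions that the mixed $2$-planes and the sphere-tangent $2$-planes all have sectional curvature $B^2$; thus the metric is round of radius $1/B$. Applying this with $A=\mu(s)/\sqrt{1-\mu'^2(s)}$ and $B=\sqrt{1-\mu'^2(s)}/\mu(s)$ yields the first assertion: $dt^2+\gamma_s^2(t)\,ds^2_{n-1}$ is round, of radius $\mu(s)/\sqrt{1-\mu'^2(s)}$, for $t\in[s+\alpha(R,\epsilon),L(s)]$.

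For the ``consequently'' part, note that $t=L(s)$ is the centre of this round disc (there $\gamma_s$ vanishes with $\gamma_s'=-1$, so the region closes up smoothly), so for $t<L(s)$ the fibre $\{t\}\times S^{n-1}$ is the distance sphere of radius $L(s)-t$ about this end-point, carrying the induced metric $\gamma_s^2(t)\,ds^2_{n-1}$, i.e.\ a round $(n-1)$-sphere of intrinsic radius $\gamma_s(t)$. Taking $t=s+\alpha(R,\epsilon)$, which is strictly to the left of $L(s)$ since $\alpha(R,\epsilon)$ was chosen smaller than $\tfrac12(L(s)-s)$ for every $s\in[\tau_0,L_0]$, this distance sphere lies in the round region, is nondegenerate (so $\gamma_s(s+\alpha(R,\epsilon))=\bar\gamma_s(s+\alpha(R,\epsilon))>0$), and has intrinsic radius $\epsilon'=\bar\gamma_s(s+\alpha(R,\epsilon))$, the cosine expression above evaluated at $t=s+\alpha(R,\epsilon)$. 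Smoothness of $\epsilon'$ in $(R,\epsilon,s)$ is then read off this closed formula: $\mu$, and hence $\mu(s)$ and $\mu'(s)$, depend smoothly on $(R,\epsilon,s)$ with $\mu(s)>0$ and $|\mu'(s)|<1$; $\alpha(R,\epsilon)$ was chosen to be a smooth function of $(R,\epsilon)$; and $\cos$, $\sin^{-1}$, the square roots and the division are all smooth on the relevant ranges.

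I do not anticipate a real obstacle here, as the content is essentially unwinding the construction. The only points requiring a moment's care are the bookkeeping --- that the smoothing is genuinely confined to $(s-\alpha(R,\epsilon),s+\alpha(R,\epsilon))$, which is precisely why $\alpha_0(s)$ was taken no larger than $\tfrac12(L(s)-s)$, so that $\gamma_s=\bar\gamma_s$ on all of $[s+\alpha(R,\epsilon),L(s)]$ including its left endpoint --- and the elementary verification above that the cosine warping function produces a genuinely round (constant curvature) metric rather than merely an Einstein one.
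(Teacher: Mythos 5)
Your proof is correct and is essentially the justification the paper leaves implicit: the paper presents the statement as an \emph{Observation} with no separate proof, treating it as immediate from the construction of $\gamma_s$, and your argument simply spells that out. You correctly note that the smoothing of Corollary \ref{smoothing} is confined to the open interval $(s-\alpha(R,\epsilon),s+\alpha(R,\epsilon))$ (indeed to a strictly smaller interval $(s-3\alpha/4,s+3\alpha/4)$ by that proof), so $\gamma_s=\bar\gamma_s$ on $[s+\alpha(R,\epsilon),L(s)]$, and that the cosine warping function with $AB=1$ yields constant sectional curvature $B^2$ in both mixed and sphere-tangent planes via (\ref{ricwarp}); the identification of $\epsilon'$ as the intrinsic radius of the boundary distance sphere and its smoothness from the closed cosine formula are likewise exactly right.
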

Using the above construction we can prove the following.

\begin{lemma}\label{blemish}
For given $R>0$ and $\epsilon \in (0,R)$, let $\tau_{R,\epsilon}$ and $\kappa_{R,\epsilon}$ be as in Lemma \ref{tube}. Consider a function $\mu(t),$ $t \in [\tau_{R,\epsilon},L]$, which satisfies the inequality (\ref{Ric2Ineq}) as in the construction above. If for $t \in [\tau_{R,\epsilon},\kappa_{R,\epsilon} \tau_{R,\epsilon}]$ we have $\mu(t)=R\cos(t/R)$, then there exists a one-parameter family of smooth curves $\gamma_s(t)$ for $s \in [\tau_{R,\epsilon},L_0]$, some given $L_0 \in (\kappa_{R,\epsilon} \tau_{R,\epsilon},L],$ which satisfy (\ref{Ric2Ineq}), depend smoothly on $s$, $R$ and $\epsilon$, and such that for $s$ suitably close to $\tau_{R,\epsilon}$ we have $\gamma_s(t)=R\cos(t/R)$, (so $\gamma_s$ is independent of $s$). 
\end{lemma}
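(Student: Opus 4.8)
The plan is to apply the warped product deformation procedure developed on the preceding pages to the given function $\mu$, taking $t_0=\tau_{R,\epsilon}$ and $L_0$ as in the statement. The only new ingredient is the hypothesis $\mu(t)=R\cos(t/R)$ on $[\tau_{R,\epsilon},\kappa_{R,\epsilon}\tau_{R,\epsilon}]$: this forces the $C^1$-approximant $\bar{\gamma}_s$ to be genuinely round, hence already smooth, for $s$ in an interval around $\tau_{R,\epsilon}$, which is exactly what allows the parameter range for $s$ to be pushed all the way down to $\tau_{R,\epsilon}$ rather than to some $\tau_0>t_0$ as in the general construction.

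First I would compute $\bar{\gamma}_s$ explicitly for $s\in[\tau_{R,\epsilon},\kappa_{R,\epsilon}\tau_{R,\epsilon}]$. For such $s$ one has $\mu(s)=R\cos(s/R)$ and $\mu'(s)=-\sin(s/R)$, and since $\tau_{R,\epsilon}\le s\le\kappa_{R,\epsilon}\tau_{R,\epsilon}<R\pi/2$ (by the remark following Lemma \ref{tube}) we have $\cos(s/R)>0$, so $\sqrt{1-\mu'^2(s)}=\cos(s/R)$. Substituting into the formula defining $\bar{\gamma}_s$ gives $\mu(s)/\sqrt{1-\mu'^2(s)}=R$, $\sqrt{1-\mu'^2(s)}/\mu(s)=1/R$ and $\sin^{-1}(-\mu'(s))=s/R$, so the cosine patch equals $R\cos\bigl(\tfrac{1}{R}(t-s)+\tfrac{s}{R}\bigr)=R\cos(t/R)$. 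Combined with $\bar{\gamma}_s(t)=\mu(t)=R\cos(t/R)$ for $t\le s$, this shows $\bar{\gamma}_s(t)=R\cos(t/R)$ on all of its domain $[\tau_{R,\epsilon},L(s)]$, with $L(s)=R\pi/2$. In particular $\bar{\gamma}_s$ has no non-smooth join, the smoothing step of the procedure is vacuous, and we simply declare $\gamma_s:=R\cos(t/R)$ for all $s\in[\tau_{R,\epsilon},\kappa_{R,\epsilon}\tau_{R,\epsilon}]$. This family is independent of $s$, has $Ric_2>0$ (the round warping function has $\gamma_s''\le 0$, so (\ref{Ric2Ineq}) holds), and depends smoothly on $R$ and $\epsilon$ (only through $\tau_{R,\epsilon}$). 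This is the meaning of ``$s$ suitably close to $\tau_{R,\epsilon}$''.

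For $s\in(\kappa_{R,\epsilon}\tau_{R,\epsilon},L_0]$ I would run the warped product deformation procedure verbatim: here $\bar{\gamma}_s$ has an honest $C^1$ join at $t=s$, which is smoothed by Corollary \ref{smoothing} over an interval of controlled length, preserving the $Ric_2>0$ condition, with the constant $\alpha_0(s)$ and the chosen smoothing parameter depending continuously on $s,R,\epsilon$ exactly as in the general discussion. This yields a family $\gamma_s$ on this range that is smooth in $(s,R,\epsilon)$.

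The one substantive point is that the two regimes must fit together into a single family that is smooth in $s$ at the transition value $s=\kappa_{R,\epsilon}\tau_{R,\epsilon}$, and this is where I expect the main (mild) obstacle to lie. The key observation is that since $\mu$ is smooth on $[\tau_{R,\epsilon},L]$ and coincides with $R\cos(t/R)$ on the closed subinterval $[\tau_{R,\epsilon},\kappa_{R,\epsilon}\tau_{R,\epsilon}]$, the difference $\mu(t)-R\cos(t/R)$ vanishes to infinite order at $t=\kappa_{R,\epsilon}\tau_{R,\epsilon}$. Because the cosine patch in $\bar{\gamma}_s$ and the quintic/step-function smoothing of Corollary \ref{smoothing} all depend smoothly on the relevant data (the values $\mu(s),\mu'(s)$ and, locally, $\mu$ itself) and return $R\cos(t/R)$ identically when fed $\mu=R\cos(\cdot/R)$, the deviation $\gamma_s-R\cos(t/R)$ together with all of its $s$-derivatives vanishes at $s=\kappa_{R,\epsilon}\tau_{R,\epsilon}$. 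Combined with $\gamma_s=R\cos(t/R)$ exactly for $s\le\kappa_{R,\epsilon}\tau_{R,\epsilon}$, the two pieces glue to a family depending smoothly on $s$ (and on $R,\epsilon$) with all the asserted properties. Everything apart from this flatness/smooth-dependence bookkeeping is a direct specialisation of constructions already in hand.
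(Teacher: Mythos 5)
Your first two paragraphs (the explicit verification that $\bar\gamma_s(t)=R\cos(t/R)$ for $s\in[\tau_{R,\epsilon},\kappa_{R,\epsilon}\tau_{R,\epsilon}]$, and running the deformation procedure verbatim for larger $s$) coincide with the paper's set-up. However, the third paragraph contains a genuine gap, and it is exactly the point the paper flags as the substance of this lemma.

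You assert that the smoothing of Corollary~\ref{smoothing} ``return[s] $R\cos(t/R)$ identically when fed $\mu=R\cos(\cdot/R)$,'' and from this you deduce that $\gamma_s - R\cos(t/R)$ vanishes to infinite order in $s$ at $s=\kappa_{R,\epsilon}\tau_{R,\epsilon}$, so the two regimes glue smoothly. This is false. The $C^1$-to-$C^2$ step (Lemma~\ref{C1_to_C2}) replaces the function over $(s-\delta,s+\delta)$ by the quintic polynomial matching its value and first two derivatives at the endpoints; a cosine is not a quintic, so the output is a genuinely different curve even when the input is already smooth. Corollary~\ref{smoothing} only guarantees $|\tilde f - f|_{C^2}\to 0$ as $\alpha\to 0$, not $\tilde f=f$ for fixed $\alpha$. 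Consequently, as $s\downarrow \kappa_{R,\epsilon}\tau_{R,\epsilon}$ with $\alpha(R,\epsilon)$ fixed, $\gamma_s$ converges to the \emph{wrinkled} (quintic-patched) cosine, not to $R\cos(t/R)$; declaring $\gamma_s:=R\cos(t/R)$ for $s\le\kappa_{R,\epsilon}\tau_{R,\epsilon}$ therefore creates a jump at the transition value, not a flat match. (There is also a secondary issue you omit: for $s$ near $\tau_{R,\epsilon}$ the smoothing interval $(s-\alpha,s+\alpha)$ would fall below $t=\tau_{R,\epsilon}$, leaving the procedure undefined there.)

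The paper's proof acknowledges this explicitly --- ``for consistency we have to apply our smoothing procedure \dots\ to this already smooth curve. This modifies the curve in a non-trivial way'' --- and resolves it not by gluing two definitions of $\gamma_s$ but by running the smoothing uniformly for all $s$ and then \emph{unwrinkling}: for $s\in[\tau_{R,\epsilon}(\kappa_{R,\epsilon}+1)/2,\kappa_{R,\epsilon}\tau_{R,\epsilon}]$ the family is redefined as the linear interpolation $(1-\theta(s))R\cos(t/R)+\theta(s)\gamma_s(t)$ with $\theta$ a bump function, which smoothly carries the wrinkled curve to the pure cosine as $s$ decreases, and is constantly $R\cos(t/R)$ below the midpoint. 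This step requires an extra constraint on $\alpha(R,\epsilon)$ (small enough that the interpolant still satisfies~(\ref{Ric2Ineq}), using the last clause of Corollary~\ref{smoothing}) and simultaneously disposes of the smoothing-interval boundary problem. So the part of the argument you dismissed as ``bookkeeping'' is the part that actually needs a construction, and your proof as written does not supply it.
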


\begin{proof}
This largely follows from the above construction by setting $t_0=\tau_{R,\epsilon}$ and $\tau_0=\kappa_{R,\epsilon} \tau_{R,\epsilon},$ which yields a family of functions $\gamma_s$ wth $s \ge \kappa_{R,\epsilon}\tau_{R,\epsilon}.$ The task is therefore to extend this family to $s \in [\tau_{R,\epsilon},\kappa_{R,\epsilon} \tau_{R,\epsilon}]$ in such a way that close to the left-hand end of this interval, $\gamma_s(t)=R\cos(t/R).$ To aid the reader we provide a depiction in Fig. \ref{WarpDef} below.

\begin{figure}[!htbp]
\vspace{1cm}
\hspace{2cm}
\begin{picture}(0,0)
\includegraphics{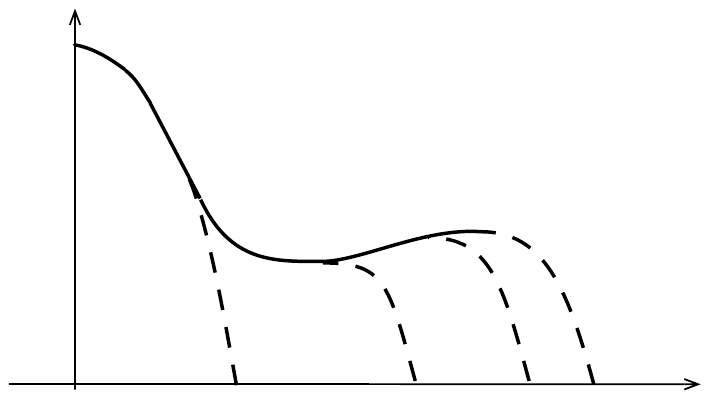}%
\end{picture}
\setlength{\unitlength}{3947sp}
\begingroup\makeatletter\ifx\SetFigFont\undefined%
\gdef\SetFigFont#1#2#3#4#5{%
  \reset@font\fontsize{#1}{#2pt}%
  \fontfamily{#3}\fontseries{#4}\fontshape{#5}%
  \selectfont}%
\fi\endgroup%
\begin{picture}(5079,1559)(1902,-7227)
\put(3700,-6500){\makebox(0,0)[lb]{\smash{{\SetFigFont{10}{8}{\rmdefault}{\mddefault}{\updefault}{\color[rgb]{0,0,0}$\gamma_s$}%
}}}}
\put(2600,-5500){\makebox(0,0)[lb]{\smash{{\SetFigFont{10}{8}{\rmdefault}{\mddefault}{\updefault}{\color[rgb]{0,0,0}$\mu$}%
}}}}
\put(5150,-7200){\makebox(0,0)[lb]{\smash{{\SetFigFont{10}{8}{\rmdefault}{\mddefault}{\updefault}{\color[rgb]{0,0,0}$t$}%
}}}}
\end{picture}%
\caption{The effect of the warped product deformation procedure (dashed curves)}
\label{WarpDef}
\end{figure}

We can certainly define $\gamma(s)$ in exactly the same way when $s \in [\tau_{R,\epsilon},\kappa_{R,\epsilon} \tau_{R,\epsilon}]$, provided we are prepared to ignore the fact that the left-hand end of the $\bar{\gamma}_s$-to-$\gamma_s$ smoothing interval will lie below $t=\tau_{R,\epsilon}$ for $s$ close to $\tau_{R,\epsilon}.$ Notice that the function $\bar{\gamma}_s(t)$ is equal to $R\cos(t/R)$ for $t \in [\tau_{R,\epsilon},R\pi/2].$ However, for consistency we have to apply our smoothing procedure (as in Corollary \ref{smoothing}) to this already smooth curve. This modifies the curve in a non-trivial way, and thus for $s$ close to $\tau_{R,\epsilon}$ our task is to systematically undo this modification.

We approach this task mindful of the last claim in Corollary \ref{smoothing}: that if $f$ is smooth, as $\alpha \to 0,$ $|\tilde{f}-f|_{C^2} \to 0.$ Thus if $\alpha$ is chosen sufficiently small,  the smooth homotopy $(1-\phi(x))\tilde{f}(t) +\phi(x)f$ between $\tilde{f}$ and $f$ satisfies the inequality (\ref{Ric2Ineq}) for all $ x \in [0,1].$

Turning our attention back to the curves $\gamma_s(t)$, given the fixed form of $\mu(t)$ for $t \in [\tau_{R,\epsilon},\kappa_{R,\epsilon} \tau_{R,\epsilon}]$, we can assume without loss of generality that the function $\alpha(R,\epsilon)$ (as in the above construction) has been chosen small enough so that the $s$-indexed homotopy $$(1-\theta(s))R\cos(t/R)+\theta(s)\gamma_s(t)$$ where $$\theta(s):=\phi\Bigl( \frac{s-\tau_{R,\epsilon}(\kappa_{R,\epsilon}+1)/2}{\tau_{R,\epsilon}(\kappa_{R,\epsilon}-1)/2}\Bigr),$$ for $s \in [\tau_{R,\epsilon}(\kappa_{R,\epsilon}+1)/2,\kappa_{R,\epsilon} \tau_{R,\epsilon}]$ and $t \in [\tau_{R,\epsilon},\pi R/2]$, satisfies (\ref{Ric2Ineq}) for all $s$ and $t$. By construction, this homotopy smoothly `unwrinkles' $\gamma_s$ to the desired cosine function as $s$ approaches $\tau_{R,\epsilon}.$ 
Moreover it is easily checked that $\tau_{R,\epsilon}(\kappa_{R,\epsilon}+1)/2-\alpha(R,\epsilon)>\tau_{R,\epsilon}$, and hence the smoothing interval for $\bar{\gamma}_s$ is always contained in $(\tau_{R,\epsilon},L(s))$ for all $s \in [\tau_{R,\epsilon}(\kappa_{R,\epsilon}+1)/2,L_0].$
The claim about the smooth dependence of $\gamma_s$ on $R$ and $\epsilon$ is clear.
\end{proof}

\begin{remark}\label{start}
In the proof of Proposition \ref{identity} below, and again in Section \ref{LSS}, we will use the warped product deformation procedure on a function $\mu(t)$ which takes the form $\mu(t)=100\cos\bigl((t-L+50\pi)/100\bigr)$ for $t \in [L-50\pi-1,L].$ (Note that $L>50\pi+1.$) In this situation we will set $L_0=L-50\pi.$ Exactly the same argument as employed in the proof of Lemma \ref{blemish} shows that there exists a constant $c$, which here is independent of all other parameters, such that provided $\alpha(R,\epsilon)<c$ for $R>0,\epsilon \in (0,R),$ (which without loss of generality we will assume), then there is an analogous homotopy between $\gamma_s$ and $\mu$ for $s \in [L-50\pi-1,L-50\pi]$ say, satisfying (\ref{Ric2Ineq}) at each stage. In other words, we can - and will - assume that $\gamma_s$ agrees with $\mu$ (and is therefore independent of $s$) for $s$ suitably close to $L-50\pi$. 
\end{remark}

\begin{proof}[Proof of Proposition \ref{identity}]
As this argument is quite long, for the convenience of the reader we will break it into a number of steps. The general strategy throughout is to give a smooth deformation of Riemannian manifolds, starting with $\hat{\sigma}(h,e)$, and ending with $(S^n,h)$. This will give a path in $\rdok(S^n)$ provided that at each stage in the process we have a smoothly varying diffeomorphism from $S^n$ with which to pull back the metric. Note that throughout this argument we will assume $R>1$ and $\epsilon=1,$ so $\epsilon$ will be omitted from the notation.

Let us assume that $h$ is round with radius $R$ in a neighbourhood of the basepoint.

The following terminology will be useful. There is an isometric inclusion of $(S^n\setminus D^n,h)$ into $\hat{\sigma}(h,e).$ We will refer to the image of this inclusion as the `$h$-part' of  $\hat{\sigma}(h,e).$ Likewise, there is an isometric inclusion of $$(S^n\setminus D^n,h)\cup {\mathcal T}_R \cup C(\rho_R,\rho_{100}) \cup {\mathcal T}_{100}$$ into $\hat{\sigma}(h,e),$ and we will refer to the image of this as the `extended $h$-part'. Similarly for the (extended) $e$-part of $\hat{\sigma}(h,e).$ Given a diffeomorphism $S^n \to \hat{\sigma}(h,e),$ we will use the same terms for the corresponding parts of $S^n$ equipped the pull-back metric, as no confusion will arise. We will also use this terminology in analogous situations where the meaning is clear.
\smallskip

\noindent{\it Step 1.}
We begin by considering $\hat{\sigma}(h,e).$ Introduce a parameter $t$ locally into the manifold, so that half the docking station and the extended $e$-part of the manifold can be described as having a warped product metric  $dt^2+\mu^2(t)ds^2_{n-1}$ for some function $\mu(t)$, with $t \in [0,L]$ for some $L$. We will assume that the basepoint in the docking station has $t$-parameter 0, and that the `$e$-end' of the manifold corresponds to $t=L.$

Denote the connected-sum contraction diffeomorphisms (as in Definition \ref{Delta}) used to form $\sigma(h,e)$ from $\hat{\sigma}(h,e)$ by $\bar{\Delta}_h$ and $\bar{\Delta}_e$ (so $\bar{\Delta}_h=\bar{\Delta}(R,100)$ and $\bar{\Delta}_e=\bar{\Delta}(100,100)$). We can extend this pair of diffeomorphisms trivially over the body of the docking station in $\hat{\sigma}(h,e)$ to obtain a global diffeomorphism $S^n \to \hat{\sigma}(h,e)$, with $\sigma(h,e)$ being the pull-back of the metric on $\hat{\sigma}(h,e)$ via this map.

Next, use Lemma \ref{blemish} and Remark \ref{start} to smoothly deform the extended $e$-part of $\hat{\sigma}(h,e)$ back to the docking station. For this we need to set $R=100$ and $\epsilon=1$, so that the $t$-value at the boundary of the docking station in this arrangement is $\tau_{100}$. We also need to set $L_0=L-50\pi.$

At each stage in this contraction (parametrized by $s \in [\tau_{100},L_0]$) we have a quantity $\delta_1(s)$ as in Lemma \ref{contraction}, which measures the distance along the warped product manifold from the docking station boundary sphere to the endpoint of the $e$-disc, and this varies smoothly with $s$. For each value of $s$ there is a corresponding diffeomorphism $\bar{\Delta}_s$ (as in Definition \ref{Delta}). Pulling the metric back via this map, and contracting the extended $h$-part of $\hat{\sigma}(h,e)$ back by $\bar{\Delta}_h$, then produces a smooth path of metrics in $\rdok(S^n)$ starting with $\sigma(h,e).$ This path ends with a metric we will denote $\sigma(h,\cdot)$. By Remark \ref{id_ending} this metric agrees with the round metric of radius 100 outside the image of  $\bar{\Delta}^{-1}_h$, and is thus the pull-back of the natural metric on the manifold $$\hat{\sigma}(h,\cdot):=(S^n\setminus D^n,h)\cup {\mathcal T}_R \cup C(\rho_R,\rho_{100}) \cup{\mathcal T}_{100}\cup (S^n\setminus D^n,100^2ds^2_n)$$ via the trivial extension of $\bar{\Delta}_h$ over the docking station sphere.

We have now, in effect, smoothly eliminated the extended $e$-part of the metric from both $\hat{\sigma}(h,e)$ and $\sigma(h,e)$. Notice that in doing so, we have not moved the basepoint, nor made any metric change near the basepoint. In particular, we still have a distance sphere about the basepoint with an intrinsic round metric of radius one. 

Our task is now to smoothly modify the metric $\sigma(h,\cdot)$ back to the original $(S^n,h).$ In order to do this, we need pay special attention to the basepoint $x_0$, and the next step is carried out for precisely this reason.
\smallskip

\noindent{\it Step 2.} Bearing in mind that on $(S^n,100^2ds^2_n)$, a distance sphere with intrinsic radius 1 is the boundary of an embedded  disc of radius $100\sin^{-1}(1/100)$, we choose and fix a path from the point $z_1$ (about which the extended $h$-part is attached to the docking station - see Definition \ref{H_mult}) to the south pole (i.e. antipodal to the basepoint). We now smoothly deform the manifold $\hat{\sigma}(h,\cdot)$ by sliding the point of attachment of the extended $h$-part along the path, from its original position (determined by $z_1$) to being centred over the south pole. We do this by removing a disc of radius $100\sin^{-1}(1/100)$ about each point of the path in turn, and gluing in the extended $h$-part. We will assume that the path has been chosen so that every point is at a distance greater than $200\sin^{-1}(1/100)$ from the basepoint. This ensures that the deformation does not interfere with the basepoint or its neighbourhood. Let us denote the final Riemannian manifold by $\hat{\sigma}(h,\cdot)_S.$ At each point in the deformation, there is a corresponding diffeomorphism from the standard sphere, given by trivially extending the contraction map for the extednded $h$-part located at the appropriate point. Pulling back the metric by this corresponding family of diffeomorphisms then gives a smooth path in $\rdok(S^n)$, starting with $\sigma(h,\cdot)$ and ending with the extended $h$-part of the metric over the south pole. Let us denote this final diffeomorphism by $\bar{\Delta}_S:S^n \to \hat{\sigma}(h,\cdot)_S$, and the resulting pull-back metric on $S^n$ by $\sigma(h,\cdot)_S.$
\smallskip

We now need to smoothly deform $\sigma(h,\cdot)_S$ to $(S^n,h)$. We will do this in two stages. Firstly we will smoothly adjust the diffeomorphism $\bar{\Delta}_S,$ so that at the end of this deformation, the pull-back metric restricted to the complement of some disc $D_{x_0}$ is precisely $(S^n\setminus D^n,h).$ As a consequence of this `stretching' of the $h$-part of metric, the pull-back of the metric on ${\mathcal T}_R \cup C(\rho_R,\rho_{100})\cup {\mathcal T}_{100}\cup (S^n \setminus D^n,100^2ds^2_n)$ becomes squashed into $D_{x_0}.$ Secondly, we smoothly adjust the metric on $D_{x_0}$ to give the round metric of radius $R$. Our approach here is essentially the same as for deforming the extended $e$-part of the original arrangement back to the docking station: we will work with $\hat{\sigma}(h,\cdot)_S$ and deform this back to $S^n$, pulling-back the metric to $S^n$ throughout. However this time we will be reducing the docking station sphere back to the $h$-sphere. This requires us to take special care of the basepoint and its neighbourhood. For the moment however we will ignore this issue.
\smallskip

\noindent{\it Step 3.}
As in the connected-sum contraction procedure, we will introduce a global parameter $t$ ({\it ignoring the previous use of this symbol earlier in the proof}) into the manifold $\hat{\sigma}(h,\cdot)_S$. Given that $h$ is round with radius $R$ in a neighbourhood of its basepoint, we temporarily replace the $h$-part of $\hat{\sigma}(h,\cdot)_S$ by $(S^n\setminus D^n,R^2ds^2_{n-1}).$ With respect to this background metric, we introduce $t$ to measure distance from the south pole of this sphere. Suppose that $t \in [0,L']$. `Replacing' $R^2ds^2_{n-1}$ by the metric $h$, we continue to use the same parametrization throughout.

Similarly, we introduce a parameter $u$ into the standard sphere $S^n$: we let $u$ measure distance from the {\it south pole} in $(S^n,100^2ds^2_n),$ so that the basepoint corresponds to $u=100\pi.$

The diffeomorphism $\bar{\Delta}_S$ is determined by a diffeomorphism $\Delta_S:[0,100\pi] \to [0,L'].$ The value of $t \in [0,L']$ corresponding to the boundary where the $h$-part of $\hat{\sigma}(h,\cdot)_S$ meets the tube ${\mathcal T}_R$ is easily calculated to be $t_R:=R(\pi-\sin^{-1}(1/R)).$ 
Under the diffeomorphism $\bar{\Delta}^{-1}_S,$ we see that $(S^n\setminus D^n,h)$ is uniformly compressed into the disc centred on the south pole of the standard sphere corresponding to $u \in [0,\Delta^{-1}_S(t_R)].$ Our aim is to `uncompress' this by smoothly stretching the metric around the appropriate part of the standard sphere via a one-parameter family of diffeomorphisms $\bar{\theta}_x:S^n \to S^n$, for $x \in [0,1].$

Clearly, $t=t_R$ on $(S^n,R^2ds^2_n)$ corresponds to $u=100t_R/R$ when the underlying spaces are identified. (So $D_{x_0}$ will correspond to $u \in [100t_R/R,100\pi]$.) We therefore need to stretch the $u$-interval $[0,\Delta^{-1}_S(t_R)]$ within $[0,100\pi]$ by a progressive uniform factor, so that in the end, the fully-stretched interval coincides with $[0,100t_R/R].$ We must then extend in some way so that we have a smooth one-parameter family of diffeomorphisms $\theta_x:[0,100\pi] \to [0,100\pi]$, which then determines the diffeomorphisms $\bar{\theta}_x:S^n \to S^n$ in the obvious way.

We define $\theta_x,$ $x \in [0,1],$ as follows. Set $\theta_0(u)=u$ for all $u \in [0,100\pi].$ We will define $\theta_1$ so as to linearly map $[0,\Delta^{-1}_S(t_R)]$ onto $[0,100t_R/R]$. For $u$ close to $100\pi$ we will also demand that $\theta_1$ behaves linearly, and of course must satisfy $\theta_1(100\pi)=100\pi.$ We must therefore fashion a transition between these two linear parts, in a manner which depends smoothly on $R$. In order for the transition to be complete before $u=100\pi$, let us choose the transition interval to be $\frac{1}{100^2}(100\pi-100t_R/R)=\frac{1}{100}\sin^{-1}(1/R).$ Proceeding in a similar manner to the proof of Lemma \ref{contraction}, we set $$\theta_1(u)=
\int_0^u \Bigl[1-\bar{\phi}\Bigl(\frac{y-\Delta^{-1}_S(t_R)}{100^{-1}\sin^{-1}(1/R)}\Bigr)\Bigr] \Bigl(\frac{100t_R}{R\Delta^{-1}_S(t_R)}\Bigr)+c\bar{\phi}\Bigl(\frac{y-\Delta^{-1}_S(t_R)}{100^{-1}\sin^{-1}(1/R)}\Bigr)\,dy, 
$$
where $c$ is the unique value which ensures that $\theta_1(100\pi)=100\pi.$
For $x \in (0,1)$, we need $\theta_x$ to give a smooth homotopy between $\theta_0$ and $\theta_1.$ We set $$\theta_x(u)=(1-\phi(x))\theta_0+\phi(x)\theta_1.$$


Now form the composition $\Lambda_x:=\bar{\Delta}_S \circ\bar{\theta}^{-1}_x:S^n \to \hat{\sigma}(h,\cdot)_S.$ Pulling back the metric on $\hat{\sigma}(h,\cdot)_S$ via $\Lambda_x$ then gives a one-parameter family of metrics in $\rdok(S^n)$, starting with $\sigma(h,\cdot)_S$ and ending with a metric which agrees with $h$ on the complement of $D_{x_0}.$ Let us denote this last metric by $h'$.
\smallskip

\noindent{\it Step 4.}
The final step (ignoring the basepoint issue) is to smoothly `unwrinkle' the metric $h'$ in $D_{x_0},$ so we end up with the round metric of radius $R$ is this disc. Of course $(D_{x_0},h')$ is isometric to the complement of the $h$-part of $\hat{\sigma}(h,\cdot)_S$ via the restriction of $\Lambda_1.$ The idea here is essentially the same as for contracting the extended $e$-part of $\hat{\sigma}(h,e)$ as carried out in Step 1: we work with the Riemannian manifold $\hat{\sigma}(h,\cdot)_S$, and use the warped product deformation procedure to contract the docking station part of the manifold back to the $h$-part, so we end up with $(S^n,h).$ In order to apply Lemma \ref{blemish} and Remark \ref{start}, it will be convenient to introduce a new parameter $v$ into $\hat{\sigma}(h,\cdot)_S$, given by $v=t-R\pi/2.$ The metric on the complement of the $h$-part of $\hat{\sigma}(h,\cdot)_S$ is then given by $dv^2+\bar{\mu}^2(v)ds^2_{n-1}$ for some function $\bar{\mu},$ which satisfies $\bar{\mu}(v)=R\cos(v/R)$ for $v \in [\tau_R,\kappa_R \tau_R].$ We then apply  Lemma \ref{blemish} and Remark \ref{start} to this metric, setting $\epsilon=1,$ $L=L'-R\pi/2$ and $L_0=L'-R\pi/2-50\pi.$ 

Of course, at each stage of this deformation, we need to exhibit a diffeomorphism of the manifold with the standard sphere, so that when we pull the metric back to standard sphere we obtain the desired path of metrics linking $(S^n,h')$ to $(S^n,h).$ As in Step 1, these diffeomorphisms will be given by Definition \ref{Delta} (extended by the identity over the complement), with existence guaranteed by Lemma \ref{contraction}.

In $\Lambda_1$ we have an isometry between $(S^n ,h')$ and $\hat{\sigma}(h,\cdot)_S.$ The only thing preventing us from immediately implementing the strategy from Step 1 is that restricted to $D_{x_0},$ $\Lambda_1$ does not agree with the contraction diffeomorphism $\bar{\Delta}(100,R).$ (Note that connected-sum contraction map which pulls the complement of the $h$-part of $\hat{\sigma}(h,\cdot)_S$ back to $D_{x_0}$ is precisely $\bar{\Delta}(100,R).$) However we claim that there is a smooth homotopy of diffeomorphisms from $\Lambda_1|_{D_{x_0}}$ to $\bar{\Delta}(100,R)$ (which can be trivially extended by the identity over the complement of $D_{x_0}$ to give a homotopy of global diffeomorphisms $S^n \to \hat{\sigma}(h,\cdot)_S$). Performing this homotopy {\it before} executing the warped product deformation procedure then allows us to apply the arguments of Step 1 to deform $(S^n,h')$ into $(S^n,h).$ 

Thinking in terms of the $t$ and $u$ parameters, we note that each of these diffeomorphisms is determined by a diffeomorphism $[100t_R/R,100\pi] \to [t_R,L'],$ which we will denote $\Gamma_1$ respectively $\Gamma_2$. We have a homotopy $$(1-\phi(x))\Gamma_1(u)+\phi(x)\Gamma_2(u)$$ for $x \in [0,1]$, and this is easily seen to be a homotopy through diffeomorphisms (since being a diffeomorphism here is equivalent to the $u$-derivative being positive). This clearly induces the desired homotopy, and thus establishes the claim.

At this point we have succeeded in deforming $\sigma(h,e)$ to $(S^n,h)$ through metrics of 2-positive Ricci curvature. However, in Step 4 we neglected to consider the basepoint and its neighbourhood, so we cannot guarantee that this last part of the path of metrics stays within $\rdok(S^n).$ Our final step addresses this point.
\smallskip

\noindent{\it Step 5.}
In the metric deformation of Step 4, it is clear that a neighbourhood of the basepoint will be round at every stage, but it is not clear what the radius of these round discs might be, nor if they will always contain a distance sphere with intrinsic radius one. To fix this problem, we can simply scale. For a given metric $h$, or rather, for a given $R>1$, there is clearly a number $\beta\ge 1$ such that scaling the path of metrics in Step 4 by $\beta^2$ keeps the path within $\rdok(S^n)$. For such a choice of $\beta$, we begin the final stage of the deformation by smoothly scaling the metric $h'$ up to $\beta^2h'$, say via the path $\bigl(1+\phi(x)(\beta^2-1)\bigr)h'$ for $x \in [0,1]$. We then follow this by the scaled path from $\beta^2h'$ to $\beta^2h$, and finally reverse the scaling to end with the metric $h$. As indicated above, the choice of $\beta$ will depend on $R$, but is otherwise independent of the metric $h$. Provided we can make a choice of $\beta$ smoothly dependent on $R$, the proof will be complete.

At each stage of the warped product deformation in Step 4 linking $\hat{\sigma}(h,\cdot)_S$ to $(S^n,h)$, we have a manifold containing a warped product disc with metric $dv^2+\gamma_s^2(v)ds^2_{n-1}.$ Here $v$ (defined in Step 4) belongs to an interval $ [0,L(s)]$, with $s$ indexing the deformation. In accordance with Lemma \ref{blemish}, $s$ belongs to the interval $[\tau_R,L'-R\pi/2-50\pi]$. The centre of this disc (i.e. a neigbourhood of the basepoint) is round of some radius depending smoothly on $s$, and thus there is a number $m(s)$ which is the maximal intrinsic radius of a distance sphere within this round neighbourhood. By the compactness of the interval $[\tau_R,L'-R\pi/2-50\pi]$ there is a positive minimum value of $m(s).$ Call this $M$. Clearly $M=M(R)$, with (at least) a continuous dependence on $R$. Choose a smooth function $\bar{\beta}(R)$ with $0<\bar{\beta}(R)<M(R)$ for all $R>1$. Setting $\beta(R)=1/\bar{\beta}(R)$ will then clearly suffice for our scaled deformation to stay within the space $\rdok(S^n)$, as required.
\end{proof}

\begin{proposition}
\label{assoc}
The operation $\sigma$ is homotopy associative.
\end{proposition}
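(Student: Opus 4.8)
The plan is to follow the strategy of \cite{Wa}: instead of deforming $\sigma(\sigma(h_1,h_2),h_3)$ directly into $\sigma(h_1,\sigma(h_2,h_3))$, I would introduce a symmetric triple product $\sigma_3(h_1,h_2,h_3)$ and show that both iterated products are homotopic to it, through $\rdok(S^n)$ and smoothly in $h_1,h_2,h_3$. Here $\sigma_3$ is defined exactly as in Definition \ref{H_mult}, except that the docking station is now $S^n(100)$ with the north pole as basepoint and with three equally spaced equatorial points $z_1,z_2,z_3$, about which discs of intrinsic boundary radius $1$ have been removed; to the $z_i$-boundary one attaches the `extended $h_i$-part' $(S^n\setminus D^n,h_i)\cup {\mathcal T}_{R_i}\cup C(\rho_{R_i},\rho_{100})\cup{\mathcal T}_{100}$ (in the sense of the proof of Proposition \ref{identity}) and then performs the connected sum contraction of Definition \ref{Delta} three times. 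Just as for $\sigma$, the result has $Ric_2>0$ (hence $Ric_k>0$ for all $k\ge 2$), lies in $\rdok(S^n)$ since the surgeries are far from $x_0$, and varies smoothly with $h_1,h_2,h_3$.

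To see that $\sigma(\sigma(h_1,h_2),h_3)\simeq\sigma_3(h_1,h_2,h_3)$, I would first recall that $\hat\sigma(\sigma(h_1,h_2),h_3)$ literally contains an isometric copy of $(S^n\setminus D^n,\sigma(h_1,h_2))$, and that $\sigma(h_1,h_2)$ agrees with the round metric of radius $100$ away from two small discs about the points $z_1,z_2$ of its own docking station, each carrying a copy --- squashed by the contraction diffeomorphism $\bar\Delta$ of Definition \ref{Delta} --- of an extended $h_i$-part. Since the diffeomorphism $S^n\to\hat\sigma(\sigma(h_1,h_2),h_3)$ used to pull the metric back to the standard sphere may be chosen up to homotopy, I would homotope it so as to `un-squash' these two discs, composing on each disc with a homotopy of diffeomorphisms running from the identity to $\bar\Delta$; pulling back any $Ric_k>0$ metric by any diffeomorphism keeps it $Ric_k>0$, and since all of this takes place far from $x_0$ the path remains in $\rdok(S^n)$ and is smooth in the $h_i$. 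The outcome is the pull-back of a manifold in which an outer docking sphere $D_0$ (carrying the extended $h_3$-part at $z_2$ and the global basepoint at its north pole) is joined at $z_1$ through a neck ${\mathcal T}_{100}\cup C(\rho_{100},\rho_{100})\cup{\mathcal T}_{100}$ to an inner docking sphere $D_1$ of radius $100$ carrying the extended $h_1$- and $h_2$-parts at its two equatorial points.

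It then remains to merge $D_0$ and $D_1$ into a single round docking sphere. After first sliding the attaching regions along chosen paths on the two round spheres --- exactly as the $h$-part was slid to the south pole in Step 2 of the proof of Proposition \ref{identity} --- so that they become mutually well separated and bounded away from the global basepoint, I would carry out the merger by re-deploying the machinery behind Proposition \ref{identity}: the warped product deformation procedure (Lemma \ref{blemish}, Remark \ref{start}) together with the connected sum contraction of Lemma \ref{contraction}, each extended trivially over the ambient manifold, can be used to deform $D_0\cup\mathrm{neck}\cup D_1$ --- holding the three attached extended parts fixed --- through $Ric_2>0$ metrics into a single round $S^n(100)$ with three discs removed, one about each attaching region. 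A global-rescaling argument identical to Step 5 of the proof of Proposition \ref{identity} ensures that a round distance sphere of intrinsic radius one persists about the global basepoint throughout, so the path stays in $\rdok(S^n)$. Re-contracting, and applying a rotation of $S^n$ about the north--south axis as in Proposition \ref{H_commute} to relabel the $z_i$ if necessary, exhibits the endpoint as $\sigma_3(h_1,h_2,h_3)$. The argument that $\sigma(h_1,\sigma(h_2,h_3))\simeq\sigma_3(h_1,h_2,h_3)$ is identical, un-squashing the inner product attached at $z_2$ of the outer docking station instead of at $z_1$. Composing the two homotopies gives $\sigma(\sigma(h_1,h_2),h_3)\simeq\sigma(h_1,\sigma(h_2,h_3))$, and since every deformation above depends smoothly --- hence continuously --- on $h_1,h_2,h_3$, this is a genuine homotopy $\sigma\circ(\sigma\times\mathrm{id})\simeq\sigma\circ(\mathrm{id}\times\sigma)$.

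The main obstacle is the merging step: deforming the two round docking spheres joined by a neck into a single round docking sphere through $Ric_2>0$ metrics, holding fixed both the three attached `legs' and the distinguished round neighbourhood of the basepoint, and doing so via families of diffeomorphisms pulling back the metric that vary smoothly with $h_1,h_2,h_3$. This is where essentially the whole apparatus of the proof of Proposition \ref{identity} --- the warped product deformation procedure, the connected sum contraction lemma, and the basepoint-rescaling device --- must be brought to bear; the remaining steps amount to bookkeeping with rotationally symmetric parametrizations of the kind already carried out in the connected sum contraction paragraph.
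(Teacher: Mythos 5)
Your strategy is genuinely different from the paper's, and its key step is not justified as written. The paper first uses commutativity (Proposition \ref{H_commute}) to reduce to showing $\sigma(\sigma(h_1,h_2),h_3)\simeq\sigma(\sigma(h_3,h_2),h_1)$; these two products have the same shape, both being contractions of connected sums built around one fixed double docking station $\mathcal D$ (two radius-$100$ spheres joined by a neck, with one extended $h$-part attached to one sphere and two to the other). The proof then slides the extended $h_1$- and $h_3$-parts along fixed paths $\gamma_1,\gamma_3$ in $\mathcal D$, swapping their positions, and never merges the two docking spheres. Crucially, to move the attaching discs over points of $\mathcal D$ that are not locally round, the paper applies Lemma \ref{basepoint} to produce a smoothly varying locally round metric of radius $\bar R_i(t)$ at each $\gamma_i(t)$ (plus a global rescale $\beta$), and the attached leg arrangement is taken to be $(S^n\setminus D^n,h_i)\cup\mathcal T_{R_i}\cup C(\rho_{R_i},\rho_{\beta\bar R_i(t)})$; the connecting piece explicitly varies with $t$.

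Your route introduces a symmetric triple product $\sigma_3$ and deforms both associations to it, which forces you to merge $D_0\cup\text{neck}\cup D_1$ into a single round $S^n(100)$, and this is where the argument has a gap. You assert the merging can be done ``holding the three attached extended parts fixed'' by extending the warped product deformation procedure trivially over the ambient manifold. But Lemma \ref{blemish} (and the $C^1$-to-$C^\infty$ smoothing behind it) acts on rotationally symmetric warped products $dt^2+f^2(t)ds^2_{n-1}$; once the extended $h_1$- and $h_2$-parts are attached to $D_1$ off the neck axis, the region you need to deform is no longer a warped product, so the lemma does not apply as stated. Moreover, as $D_1$ and the neck are absorbed into $D_0$, the local round radius at the $h_1$-, $h_2$-attachment sites necessarily changes, so the pieces $\mathcal T_{(\cdot)}\cup C(\rho_{R_i},\cdot)$ there cannot be held fixed --- they must adapt, exactly as they do in the paper's sliding argument via $C(\rho_{R_i},\rho_{\beta\bar R_i(t)})$, and as they do in Section \ref{LSS} where the attachment parameters $R',\epsilon'$ depend on the edge length $\ell$. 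To repair the merging you would need something playing the role of Lemma \ref{basepoint} to track the round radius at the attaching sites and a corresponding family of connecting pieces; once you add that, the argument essentially reconverges with the paper's sliding strategy, just organized around a symmetric target instead of a relabelled double docking station.
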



\begin{figure}[!htbp]
\vspace{4cm}
\hspace{-0.5cm}
\begin{picture}(0,0)
\includegraphics{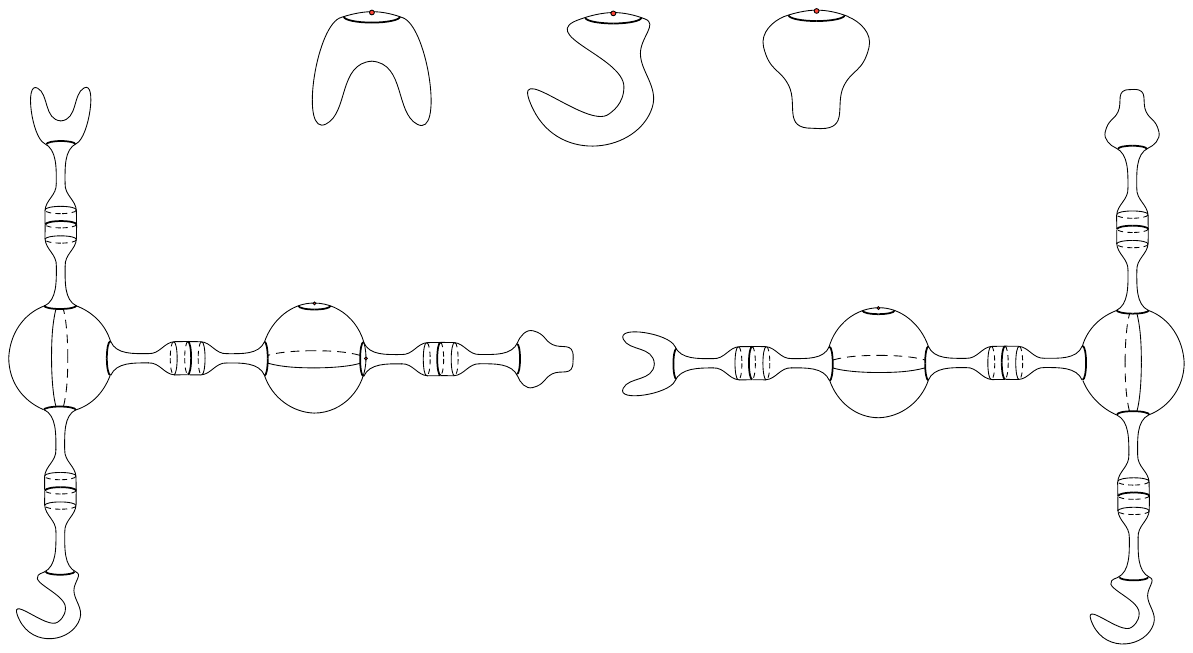}%
\end{picture}
\setlength{\unitlength}{3947sp}
\begingroup\makeatletter\ifx\SetFigFont\undefined%
\gdef\SetFigFont#1#2#3#4#5{%
  \reset@font\fontsize{#1}{#2pt}%
  \fontfamily{#3}\fontseries{#4}\fontshape{#5}%
  \selectfont}%
\fi\endgroup%
\begin{picture}(5079,1559)(1902,-7227)

\put(3600,-4900){\makebox(0,0)[lb]{\smash{{\SetFigFont{10}{8}{\rmdefault}{\mddefault}{\updefault}{\color[rgb]{0,0,0}$h_1$}%
}}}}
\put(4700,-4900){\makebox(0,0)[lb]{\smash{{\SetFigFont{10}{8}{\rmdefault}{\mddefault}{\updefault}{\color[rgb]{0,0,0}$h_2$}%
}}}}
\put(5700,-4900){\makebox(0,0)[lb]{\smash{{\SetFigFont{10}{8}{\rmdefault}{\mddefault}{\updefault}{\color[rgb]{0,0,0}$h_3$}%
}}}}
\put(2350,-4600){\makebox(0,0)[lb]{\smash{{\SetFigFont{10}{8}{\rmdefault}{\mddefault}{\updefault}{\color[rgb]{0,0,0}$\bar{h}_1$}%
}}}}
\put(2350,-7000){\makebox(0,0)[lb]{\smash{{\SetFigFont{10}{8}{\rmdefault}{\mddefault}{\updefault}{\color[rgb]{0,0,0}$\bar{h}_2$}%
}}}}
\put(4400,-6100){\makebox(0,0)[lb]{\smash{{\SetFigFont{10}{8}{\rmdefault}{\mddefault}{\updefault}{\color[rgb]{0,0,0}$\bar{h}_3$}%
}}}}
\put(5000,-6100){\makebox(0,0)[lb]{\smash{{\SetFigFont{10}{8}{\rmdefault}{\mddefault}{\updefault}{\color[rgb]{0,0,0}$\bar{h}_1$}%
}}}}
\put(6900,-7000){\makebox(0,0)[lb]{\smash{{\SetFigFont{10}{8}{\rmdefault}{\mddefault}{\updefault}{\color[rgb]{0,0,0}$\bar{h}_2$}%
}}}}
\put(6900,-4600){\makebox(0,0)[lb]{\smash{{\SetFigFont{10}{8}{\rmdefault}{\mddefault}{\updefault}{\color[rgb]{0,0,0}$\bar{h}_3$}%
}}}}

\put(2600,-6400){\makebox(0,0)[lb]{\smash{{\SetFigFont{10}{8}{\rmdefault}{\mddefault}{\updefault}{\color[rgb]{0,0,0}$\hat{\sigma}(\hat{\sigma}(h_1,h_2), h_3)$}%
}}}}
\put(5800,-6400){\makebox(0,0)[lb]{\smash{{\SetFigFont{10}{8}{\rmdefault}{\mddefault}{\updefault}{\color[rgb]{0,0,0}$\hat{\sigma}(h_1, \hat{\sigma}(h_2,h_3))$}%
}}}}

\end{picture}%
\caption{The metrics $h_1, h_2, h_3, \hat{\sigma}(\hat{\sigma}(h_1,h_2), h_3)$ and $\hat{\sigma}(h_1, \hat{\sigma}(h_2,h_3))$}
\label{Associativity}
\end{figure}

\begin{proof} Consider the product $\sigma(\sigma(h_1,h_2),h_3)$ for $h_i \in \rdok(S^n).$ We must show that up to homotopy, this is the same as $\sigma(h_1,\sigma(h_2,h_3)).$ Let $\hat{\sigma}(\hat{\sigma}(h_1,h_2),h_3)$ and $\hat{\sigma}(h_1,\hat{\sigma}(h_2,h_3))$ denote the obvious connected sum arrangements corresponding to these triple products (see Definition \ref{H_mult}). These are depicted in Fig. \ref{Associativity} above.

We begin by noting that there are canonical paths of metrics through $\rdok(S^n)$ from $\sigma(h_1,\sigma(h_2,h_3))$ to $\sigma(\sigma(h_2,h_3),h_1),$ and from $\sigma(\sigma(h_2,h_3),h_1)$ to $\sigma(\sigma(h_3,h_2),h_1),$ given by rotation (see Proposition \ref{H_commute}). Thus it will suffice to show that up to homotopy, $\sigma(\sigma(h_1,h_2),h_3)$ agrees with $\sigma(\sigma(h_3,h_2),h_1).$ 

We define a smooth manifold $\mathcal D$ as follows. Take two copies $S_a$ and $S_b$ of the Riemannian manifold $(S^n,100^2ds^2_n)$, and let $D_a$ (respectively $D_b$) denote the disc about the basepoint of $S_a$ (respectively $S_b$) for which the boundary has intrinsic radius 1. Now form the connected sum $$(S_a\setminus D_a) \cup {\mathcal T}_{100} \cup C(\rho_{100},\rho_{100}) \cup {\mathcal T}_{100} \cup (S_b \setminus D_b),$$ and let $\mathcal D$ denote the underlying smooth manifold (ignoring the metric). 
Notice that we can view both $\hat{\sigma}(\hat{\sigma}(h_1,h_2),h_3),$ $\hat{\sigma}(\hat{\sigma}(h_3,h_2),h_1)$ as connected sums involving $\mathcal D$, with $\mathcal D$ in effect playing the role of a double docking station. We will declare the basepoint $x_0$ of $\mathcal D$ to be a point on the equator of the $S_b$ sphere, (which is consistent with the basepoint locations on $\hat{\sigma}(\hat{\sigma}(h_1,h_2),h_3)$ and $\hat{\sigma}(\hat{\sigma}(h_3,h_2),h_1)$). 

Equipping $\mathcal D$ with its natural metric $g_{nat}$ from the above connected sum arrangement, we obtain via Lemma \ref{contraction} a smooth map $$\bar{\Delta}_{\mathcal D}:D_b \to {\mathcal D}\setminus (S_b \setminus D_b).$$ We can extend this by the identity on the complement of $D_b$ in $S_b$ to obtain a diffeomorphism $\hat{\Delta}_{\mathcal D}:S_b \to {\mathcal D}.$

Now consider any metric $g_{\mathcal D}\in \rdok(\mathcal D)$. Given three points $z_1,z_2,z_3$ on $\mathcal D$, suppose that $g_{\mathcal D}$ is round of some radius $\bar{R}_i$ in a neighbourhood of $z_i$, and is such that each round neighbourhood contains a sphere about $z_i$ with intrinsic radius 1. Let $D_i$ denote the disc centred on $z_i$ bounded by this sphere, and suppose that the $D_i$ are disjoint from each other, and from the corresponding disc $D(x_0)$ about the basepoint. Consider metrics $h_i \in \rdok(S^n)$, $i=1,2,3$, locally round of radius $R_i$ in a neighbourhood of the basepoint. Form the Riemannian manifolds $$(S^n\setminus D^n,h_i) \cup {\mathcal T}_{R_i}\cup C(\rho_{R_i},\rho_{\bar{R}_i}).$$ By Lemma \ref{contraction}, the above manifolds come with contracting diffeomorphisms $\bar{\Delta}_i:=\bar{\Delta}(R_i,\bar{R}_i)$ from the disc $D_i$ in $\mathcal D$ to the corresponding connected sum. Pulling back the Riemannian metric via $\bar{\Delta}_i$ for $i=1,2,3$ then gives a new Riemannian metric on $\mathcal D$. Subsequently pulling-back by $\hat{\Delta}_{\mathcal D}$ to $S_2$ then gives a metric in $\rdok(S^n).$

The upshot of the above is that any Riemannian connected sum arrangement of the type described above gives a metric in $\rdok(S^n)$, in a fixed way which varies smoothly with the `input' metrics $h_1,h_2,h_3$ and $g_{\mathcal D}$.

Observe that if we apply the above double-contraction procedure to the connected sum arrangement $\hat{\sigma}(\hat{\sigma}(h_1,h_2),h_3)$ or $\hat{\sigma}(\hat{\sigma}(h_3,h_2),h_1)$, then we obtain precisely $\sigma(\sigma(h_1,h_2),h_3)$ respectively $\sigma(\sigma(h_3,h_2),h_1).$ (Note that in either connected sum arrangement, the metric restricted to the `body' of $\mathcal D$ is $g_{nat}$.) It therefore suffices to find a smooth one-parameter family of $Ric_k>0$ connected-sum arrangements linking $\hat{\sigma}(\hat{\sigma}(h_1,h_2),h_3)$ with $\hat{\sigma}(\hat{\sigma}(h_3,h_2),h_1)$, which is independent of the metrics $h_1,h_2,h_3.$

We start with $\hat{\sigma}(\hat{\sigma}(h_1,h_2),h_3)$. Corresponding to each $h_i$ in this arrangement is a point $z_i \in \mathcal D$, as above. Let us choose and fix two smooth paths in $\mathcal D$: $\gamma_1(t)$ will be a path from $z_1$ to $z_3$; $\gamma_3(t)$ will be a path from $z_3$ to $z_1.$ In both cases we will suppose that $t \in [0,1],$ and we will assume that the closed disc of radius $200\sin^{-1}(1/100)$ about $\gamma_i(t)$ does not contain the basepoint $x_0$, the point $z_2$, nor the point $\gamma_j(t)$, for $i\neq j$. Roughy speaking, the idea is to slide the disc $$(S^n\setminus D^n,h_i) \cup {\mathcal T}_{R_i} \cup C(\rho_{R_i},\rho_{100}) \cup {\mathcal T}_{100}$$ along the path $\gamma_i$, $i=1,3,$ which will result in the desired swapping of the $h_1$ and $h_3$ arrangements. The separation of the paths from each other and the point $z_2$ ensures that the extended $h_i$ parts do not intersect during this sliding deformation. The distance of the paths from $x_0$ ensures that the metrics we construct will always pull back to $\rdok(S^n).$

The difficulty with carrying out this sliding deformation is that the curvature of $({\mathcal D},g_{nat})$ along the paths $\gamma_i(t)$ will vary, and in particular there will be points along these paths which will not have a locally round neighbourhood. As it stands, therefore, we cannot attach the `tubes' corresponding to $h_1$ and $h_3$ at some points along $\gamma_1,\gamma_3.$ To deal with this situation we apply Lemma \ref{basepoint} with $k=2$ to both paths $\gamma_i(t)$ within $({\mathcal D},g_{nat}).$ This results in a smooth path of metrics $g(t)$ in $\mathcal{R}^{Ric_2>0}_{rd,1}(\mathcal{D})$ which locally, in a neighbourhood of $\gamma_i(t),$ is round of some radius $\bar{R}_i(t).$ There is no guarantee, however, that this round region will contain a distance sphere about $\gamma_i(t)$ with intrinsic radius 1. On the other hand, as this is a compact family of metrics, the functions $\bar{R}_i(t)$ are clearly bounded below away from zero. In particular, there is a scaling factor $\beta\ge 1$ such that the path $\beta^2 g(t)$ {\it does} have the property that for each $t$, the round neighbourhood in $\beta^2g(t)$ about the point $\gamma_i(t)$ contains a distance sphere with intrinsic radius 1. This allows us to attach the tube $T_{\beta \bar{R}_i(t)}$ about the point $\gamma_i(t)$ for each $t$, and to the narrow end of this tube we can attach $(S^n\setminus D^n,h_i)\cup {\mathcal T}_{R_i} \cup C(\rho_{R_i},\rho_{\beta \bar{R}_i(t)}).$ Notice that we can choose the value of $\beta$ independent of $h_1,h_2,h_3$, as it only depends on the choice of paths $\gamma_i$ and the metric $g_{nat}$.

To slide the metrics $h_1$ and $h_3$ as required while taking into account the necessary scaling by $\beta^2$, we proceed as follows. The very first task is to smoothly scale the metric on $\mathcal D$ by a progressive scaling factor starting with 1 and ending with $\beta^2$. We could do this, for example, using the bump function $\phi(x)$ for $x \in [0,1]$ and applying the scaling factor $\beta^2(x):=1+\phi(x)(\beta^2-1).$ When scaling $\mathcal D$, for each $x \in [0,1]$ we replace the attaching tubes for $h_1,h_2,h_3$ in $\mathcal D$ by $$(S^n\setminus D^n,h_j) \cup {\mathcal T}_{R_j} \cup C(\rho_{R_j},\rho_{100\beta(x)})\cup {\mathcal T}_{100\beta(x)},$$ for $j=1,2,3,$ which gives a smooth deformation. Having achieved an appropriate scaling for $\mathcal D$, we can now move the $h_i$ part of the arrangement, $i=1,3,$ along the path $\gamma_i(t)$, as indicated in the above paragraph. After completing the path, we now simply reverse the scaling of $\mathcal D$, so the resulting Riemannian manifold is $\hat{\sigma}(\hat{\sigma}(h_3,h_2),h_1)$, as desired.

\end{proof}

\begin{proof}[Proof of Theorem A]
The proof now follows by combining Propositions \ref{H_commute}, \ref{identity} and \ref{assoc}.
\end{proof}

\begin{proof}[Proof of Corollary B]
This is a basic fact about all $H$-spaces; see in particular Corollary 5.2 of \cite{Wa}.
\end{proof}


\section{Loop space structures}\label{LSS}
We turn our attention now to Theorem C. The proof strategy mirrors that of the second main result in \cite{Wa}, adapted here for $k$-positive Ricci curvature. We will therefore be quite terse in our exposition, referring the reader to \cite{Wa} for relevant details.

In the previous section we demonstrated that up to homotopy, the space $\kRP(S^n)$, or rather the homotopy equivalent space $\rdok(S^n)$, admits an $H$-space structure when $n\geq 3$ and $k\geq 2$. The question of whether an $H$-space admits a loop space or iterated loop space structure is an old one, and various recognition results for deciding when this happens have been established by Stasheff, Boardman, Vogt, May and others. In particular, if we can generalize the homotopy product to an action of a certain operad, we go a long way to exhibiting iterated loop space structure. Just as in the previous section, our approach here is to demonstrate the existence of an iterated loop space structure on ${\mathcal R}^{Ric_k>0}(S^n)$ by showing that $\rdok(S^n)$ admits such a structure.

\subsection {The operad of little discs and the bar construction}
An {\em operad} $\mathcal{P}$ consists of a sequence of compactly generated Hausdorff topological spaces $\mathcal{P}(j)$, $j\in\{0,1,2,\cdots\}$, together with data consisting of various maps and labels which satisfy certain symmetry and composition conditions. The full definition is rather complicated and so we will not provide it here. Instead we will briefly recall an important example: {\em the operad of little discs}. This example (and others) is described in detail, along with a complete definition of the term operad (due to P. May), in section 7 of \cite{Wa}. 

\subsection*{The operad of little $n$-dimensional discs}
Let $D^n$ denote here the closed unit radius disc in ${\mathbb R}^n.$ For $n\geq 1$, $p\in D^{n}$ and $\epsilon$ where $0<\epsilon\leq 1-|p|$, let $D(p,\epsilon)$ denote the closed round disc of radius $\epsilon$ which is centred at $p$. For each integer $j\geq 0$, we denote by $\D(j)_{n}$ the set of ordered $j$-tuples of closed round discs $D(p_i, \epsilon_i)$, where $i=1,\cdots j,$ which satisfy the following condition:
\begin{equation*}
\oD (p_i,\epsilon_i)\cap \oD (p_k,\epsilon_k)=\emptyset {\text{ for all } } i\neq k.
\end{equation*}
In the case when $j=0$, $\D(j)_n$ is just a single point. There is an identity element in $\D(1)$, namely the element for which the unique little disc is equal to the whole of $D^{n}$. To ease notation we will fix an $n$ and simply write $\D(j)$ instead of $\D(j)_{n}$. There is an obvious action of the permutation group $\Sigma_j$ on $\D(j)$ which essentially permutes the labels of the little discs; this is described in section 7 of \cite{Wa}. Notice that for each little disc in an element of $\D(j)$, there is a canonical homeomorphism which identifies it with the larger unit disc $D^{n}$, i.e. shrink $D^{n}$ and translate. This leads to the following ``fitting" map:
\begin{equation*}
\begin{split}
\gamma:\D(k)\times \D(j_1)\times\cdots\times \D(j_k)&\longrightarrow \D(j_1+\cdots+j_k)\\
(c,(d_{j_1},\cdots, d_{j_k}))&\longmapsto c(d_{j_1},\cdots, d_{j_k}),
\end{split}
\end{equation*}
which replaces the $r^{th}$ little disc of $c$ with the appropriately rescaled element $d_{j_r}$ for each $r\in\{1,2,\cdots, k\}.$
\noindent Finally, we define 
\begin{equation*}
\D_n=\bigcup_{j\geq 0}\D(j),
\end{equation*}
which, along with the appropriate collection of fitting maps $\gamma$, is known as the {\em operad of little $n$-dimensional discs}.

In section 7.6 of \cite{Wa}, we describe what it means for an operad, $\mathcal{P}$, to {\em act} on a topological space $Z$. We will not repeat the definition here, although we will construct an example of such an action later. A space $Z$ admitting an action of the operad $\mathcal{P}$ is called a {\em $\mathcal{P}$-space}. We now state a classic theorem, due to Boardman, Vogt and May, the so-called {\em recognition principle} for iterated loop spaces.
\begin{theorem}[Boardman and Vogt \cite{BV}, May \cite{May}]\label{BVM}
For any $n\in \mathbb{N}$, a path connected ${\D}_n$-space, $Z$, is weakly homotopy equivalent to an $n$-fold loop space.
\end{theorem}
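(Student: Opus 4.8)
\textbf{Proof plan for Theorem~\ref{BVM} (the recognition principle).}

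This is a classical theorem and the intended ``proof'' here should really be a pointer to the literature; nonetheless I sketch the structure of the argument so the reader can see why it is plausible. The plan is to reduce the statement to the monadic formulation of the recognition principle due to May. First I would recall that the operad $\D_n$ of little $n$-discs is an $E_n$-operad: each space $\D_n(j)$ is $\Sigma_j$-free and is homotopy equivalent to the configuration space of $j$ ordered points in $\R^n$, so $\D_n(j)$ is $(n-2)$-connected and, crucially, $\D_n$ is equivalent as an operad to the little $n$-cubes operad $\mathcal{C}_n$. Because the recognition principle is a statement that is invariant under such operad equivalences (a $\D_n$-space can be replaced by a weakly equivalent $\mathcal{C}_n$-space via the two-sided bar construction $B(\mathcal{C}_n,\mathcal{C}_n\sqcup\D_n,-)$, or one simply cites that the theorem holds for any $E_n$-operad), it suffices to prove the statement for $\mathcal{C}_n$.

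The key steps, in order, are then: (1) For any $\mathcal{C}_n$-space $Z$, form the monad $C_n$ associated to the little $n$-cubes operad, so that $Z$ is a $C_n$-algebra. (2) Apply the \emph{approximation theorem} of Boardman--Vogt and May: the natural map $C_n X \to \Omega^n \Sigma^n X$ is a weak homotopy equivalence for connected $X$; this identifies the free $\mathcal{C}_n$-algebra on $X$ with the $n$-fold loop space on the $n$-fold suspension. (3) Apply the \emph{group-completion / recognition} part: given a $C_n$-algebra $Z$ which is \emph{grouplike} (in our setting $Z$ is path connected, hence automatically grouplike since $\pi_0 Z$ is trivial), the two-sided bar construction furnishes a space $B(\Sigma^n, C_n, Z)$ together with natural weak equivalences
\begin{equation*}
Z \xleftarrow{\ \simeq\ } B(C_n, C_n, Z) \xrightarrow{\ \simeq\ } \Omega^n B(\Sigma^n, C_n, Z),
\end{equation*}
where the left map is the canonical augmentation (a deformation retraction, using the extra degeneracy) and the right map is a weak equivalence because $Z$ is connected and the approximation theorem controls each simplicial level after looping. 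Composing these exhibits $Z$ as weakly homotopy equivalent to $\Omega^n Y$ with $Y = B(\Sigma^n, C_n, Z)$, which is the conclusion.

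The genuine content --- and the main obstacle in a from-scratch proof --- lies in step (2), the approximation theorem $C_n X \simeq \Omega^n\Sigma^n X$, together with the commutation of geometric realization with $\Omega^n$ needed in step (3); both are delicate point-set and simplicial arguments carried out in full in May's \emph{The Geometry of Iterated Loop Spaces}~\cite{May} and in Boardman--Vogt~\cite{BV}. Since these are standard and are not re-proved in \cite{Wa} either, I would simply invoke them: the theorem is exactly \cite[Theorem~13.1]{May} (or the corresponding statement in \cite{BV}), and a self-contained recollection of the relevant definitions appears in \cite[Section~7]{Wa}. Thus for the purposes of this paper the statement is quoted rather than proved, and the work of the paper is to \emph{verify its hypothesis}, namely to construct a genuine $\D_n$-action on $\rdok(S^n)$ (equivalently on $\kRP(S^n)$), which is the task taken up in the remainder of Section~\ref{LSS}.
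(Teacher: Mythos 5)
Your proposal correctly identifies that this theorem is cited from Boardman--Vogt and May rather than proved in the paper, and your sketch of the underlying argument (the monad $C_n$, the approximation theorem $C_n X \simeq \Omega^n \Sigma^n X$, and the two-sided bar construction $B(\Sigma^n, C_n, Z)$ exhibiting $Z \simeq \Omega^n B(\Sigma^n, C_n, Z)$ for grouplike/connected $Z$) is the standard route taken in \cite{May}. The paper itself simply quotes the recognition principle and then spends its effort verifying the hypothesis by constructing an operad action on $\rdok(S^n)$, exactly as you observe, so your treatment matches the paper's approach.
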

\begin{remark}There is a version of this theorem for non-path connected spaces but it requires another hypothesis, namely that $\pi_{0}(Z)$ forms a group under multiplication induced by the operad action. We will say a few words about this at the end of the paper in Section \ref{grouplike}.
\end{remark}
The above theorem suggests a means for proving Theorem C: simply demonstrate an action of $\D_{n}$ on $\rdok(S^{n})$. Such an action restricts as a perfectly good operad action on the path component containing the round metric, and Theorem \ref{BVM} does the rest. Morally, this is what we do, however there are some technical difficulties when it comes to constructing such a $\D_n$-action. Roughly speaking, certain commutativity requirements are impossible to satisfy. To overcome this, we replace $\D_{n}$ with a more flexible related operad known as $W\D_{n}$. This latter operad is obtained by by applying something called the {\em bar construction} (or {\em W construction}) to $\D_{n}$, a process which is explained in section 7.5 of \cite{Wa}, and of which we will give a very brief description in a moment.  Essentially, attempting to construct an action (of $\D_{n}$) yields something which satisfies the commutativity relations only up to homotopy. This discrepancy is absorbed into the construction of $W\D_{n}$, in order to make the resulting action commute ``on the nose." Given that our entire goal is the study an object only up to homotopy, it is not surprising that we can make such a replacement. This is confirmed by the following theorem of Boardman and Vogt.
\begin{theorem}[\cite{BV},Theorem 4.37]\label{BVthm}
A topological space $Z$ is a $\mathcal{P}$-space, for some operad $\mathcal{P}$, if and only it is a $W\mathcal{P}$-space.
\end{theorem}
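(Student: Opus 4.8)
The plan is to invoke the theorem of Boardman and Vogt directly (it is \cite[Theorem~4.37]{BV}), since reproving it would take us well outside the scope of a geometry paper; but let me indicate the shape of an argument. I would first recall that $W\mathcal{P}$ is assembled from rooted trees whose vertices carry labels in $\mathcal{P}$ (a vertex with $r$ incoming edges being labelled by a point of $\mathcal{P}(r)$) and whose internal edges are assigned lengths in $[0,1]$, subject to the relations that an edge of length $0$ is contracted, composing its two adjacent labels by the structure map $\gamma$ of $\mathcal{P}$, and that a bivalent vertex labelled by the identity of $\mathcal{P}(1)$ may be erased. Grafting trees (introducing a new internal edge of length $1$) makes $W\mathcal{P}$ an operad, and contracting every internal edge defines a morphism of operads $\varepsilon\colon W\mathcal{P}\to\mathcal{P}$, the augmentation, which is a homotopy equivalence $W\mathcal{P}(j)\to\mathcal{P}(j)$ on each space.

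One implication is immediate. If $Z$ is a $\mathcal{P}$-space with action maps $\theta_j\colon\mathcal{P}(j)\times Z^j\to Z$, then the composites $\theta_j\circ(\varepsilon\times\mathrm{id}_{Z^j})\colon W\mathcal{P}(j)\times Z^j\to Z$ define a $W\mathcal{P}$-action, because $\varepsilon$ is compatible with the operad compositions, with the identities, and with the $\Sigma_j$-actions. Hence every $\mathcal{P}$-space is a $W\mathcal{P}$-space; this direction I would dispose of in a line.

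The content, and the main obstacle, is the converse: a $W\mathcal{P}$-action on $Z$ has to be rigidified to a genuine $\mathcal{P}$-action. One cannot simply restrict the action along the inclusion of corollas $\mathcal{P}(j)\hookrightarrow W\mathcal{P}(j)$ — that produces a $\mathcal{P}$-action only up to coherent homotopy, the edge-length parameters of $W\mathcal{P}$ recording precisely the failure of strict associativity. Instead one pushes the structure forward, forming the relative composition product $\varepsilon_{*}Z:=\mathcal{P}\circ_{W\mathcal{P}}Z$, which is a bona fide $\mathcal{P}$-space, together with a canonical comparison map $Z\to\varepsilon_{*}Z$; the hard step is to prove this map is a homotopy equivalence, so that $Z$ itself inherits the $\mathcal{P}$-structure up to the relevant notion of equivalence. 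I would attack this by filtering $W\mathcal{P}$, and correspondingly the composition product, by the number of internal vertices of the trees, examining the resulting pushout squares, and using that the spaces $\mathcal{P}(j)=\D_n(j)$ are well-pointed and the $\Sigma_j$-actions on them are free — under these hypotheses contracting internal edges is a cofibration that does not alter homotopy type. Carrying this out coherently in $j$, compatibly with the symmetric-group actions and with grafting, is exactly the technical heart of \cite{BV}. For our purposes the theorem will be used purely as a black box, in conjunction with Theorem~\ref{BVM}, to transfer the loop-space recognition statement from $\D_n$-spaces to $W\D_n$-spaces.
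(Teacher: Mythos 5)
The paper itself offers no proof of this statement; it is quoted verbatim from \cite[Theorem 4.37]{BV} and used purely as a black box to transfer the recognition principle from $\D_n$ to $W\D_n$, and you do exactly the same. Your expository sketch of what the Boardman--Vogt argument actually does — the augmentation $\varepsilon\colon W\mathcal{P}\to\mathcal{P}$ giving the easy direction by restriction, the observation that the corolla inclusion $\mathcal{P}\hookrightarrow W\mathcal{P}$ is not an operad map so one cannot simply restrict the other way, and the rigidification via a filtration by number of internal vertices using $\Sigma$-freeness and well-pointedness — is a faithful account of the proof strategy, but since the paper supplies none of this, the two treatments agree on the essential point: cite \cite{BV} and move on.
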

\noindent Thus, to prove Theorem C, it is enough to proceed as in \cite{Wa} and demonstrate an action of $W\D_{n}$ on $\rdok(S^n)$.

While sections 7.3 through 7.5 of \cite{Wa} provide a detailed description of the bar construction, we will say a few words here about the operad $W\D_n$. As we are assuming that $n$ is fixed, let us simplify notation by writing $\D$ instead of $\D_n$. The construction of $W\D$ consists of two main steps. The first is the construction from $\D$ of an intermediary operad $\T \D$, the so-called {\em operad of trees for $\D$}. The second step realises $W\D$ as a quotient of $\T\D$ by certain relations. We now provide a brief outline of these steps.

The operad $\T\D$ should be understood as follows. As for all operads, $\T\D(0)$ is a single point. Roughly speaking, an element of $\T\D(j)$, for some $j\in\mathbb{N},$ consists of a certain oriented tree, $T$, with some associated data which we will deal with shortly. Edges of $T$ which have two adjacent vertices are called {\em internal}, while edges with only one adjacent vertex are called {\em external}. Each vertex $v$ in $T$ has a set of incoming edges, denoted $\In(v)$, and exactly one outgoing edge. 
Moreover, the set of external edges of $T$ consists of two mutually disjoint subsets: the set of {\em inputs}, $\In(T)$, of all incoming edges of $T$ which have no starting vertices, and the set consisting of the single outgoing edge, or {\em output}, which has no end vertex. The statement that $T$ belongs to $\T\D(j)$ means that $T$ has $j$ external input edges, i.e. $|\In(T)|=j$.

The extra associated data that comes with the tree $T$ is as follows. Firstly, there is a labelling of the input edges via a function $\beta$, a bijection from the set of inputs $\In(T)$ to $\{1,2,\cdots ,j\}$. Secondly, there is a map, $\alpha$, which sends each vertex $v$ of $T$ to an element $c\in \D(|\In(v)|)$, so $c$ has one (labelled) little disc for each (labelled) input edge of $v$.  Finally, there is a function $\ell$ which assigns to each edge $e$ of $T$ a real number $\ell(e)$ (the so-called {\em length} of the edge) satisfying:
\begin{enumerate}
\item{}$0\leq \ell (e) \leq 1$.
\item{}$\ell (e)=1$ when $e$ is an external edge (i.e. input or output) of $T$.
\end{enumerate}
When the edge under discussion is clear from context, we will simply write $\ell$ for its length.
With all this in mind, an element of $\T \D$ should be thought of as a quadruple $(T, \alpha, \beta, \ell)$. In depicting such an element, we typically show trees with the edges directed from bottom to top and with inputs ordered from left to right. In Fig. \ref{treeoperad} 
below we present an example with four input edges. The internal edge lengths arising from $\ell$, along with the labellings on the little discs and input edges are suppressed in the picture. The association between little disks at a vertex and the input edges of the vertex should be clear from the picture however.

\begin{figure}[!htbp]
\vspace{4cm}
\hspace{3cm}
\begin{picture}(0,0)
\includegraphics{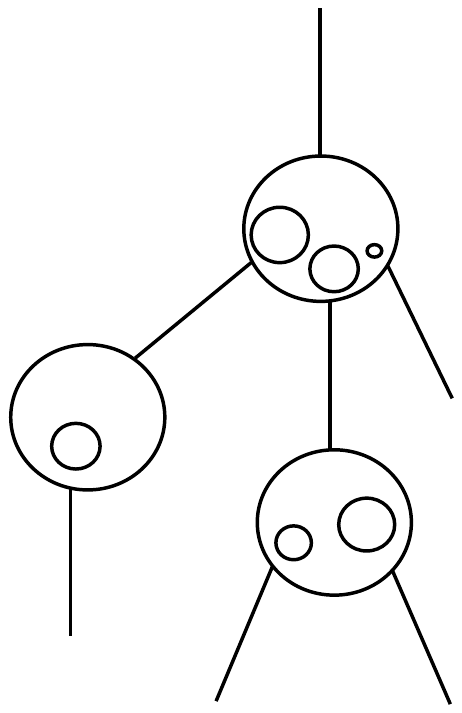}%
\end{picture}
\setlength{\unitlength}{3947sp}%
\begingroup\makeatletter\ifx\SetFigFont\undefined%
\gdef\SetFigFont#1#2#3#4#5{%
  \reset@font\fontsize{#1}{#2pt}%
  \fontfamily{#3}\fontseries{#4}\fontshape{#5}%
  \selectfont}%
\fi\endgroup%
\begin{picture}(5079,1559)(1902,-7227)
\end{picture}%
\caption{An element of $\T\D(4)$}
\label{treeoperad}
\end{figure} 

There are also certain associated composition (fitting) maps which we will not define here except to say the following. An element $c\in\T\D(k)$ can be composed with a string of elements $(c_{1}, \cdots, c_{k})\in\T\D(j_1)\times\cdots\times \T\D(j_k)$ by laying the output edge of each $c_i$ directly on top of the $i^{th}$ input edge of $c$. In each case, the edge resulting from this superposition is given the length $1$ (agreeing with the length of the superposed edges). 
Finally, the identity element in $\T\D(1)$ is the {\em trivial tree} consisting of a single edge (with length $1$) and no vertices.

Having constructed $\T \D$, the operad $W\D$ is then obtained as a quotient of $\T \D$ by certain relations, namely (a), (b) and (c) of section 7.5 of \cite{Wa}. We will not properly restate these relations here, except to say the following.
\begin{enumerate}
\item[(a)] Identifies two edges seperated by a vertex decorated by the identity element in $\D$ with a single edge. The length of the resulting edge is set to be $\ell_1+\ell_2-\ell_1\ell_2$, where $\ell_1,\ell_2$ are the lengths of the original edges.
\item[(b)] Identifies trees up to certain label permutations on subtrees.
\item[(c)] Collapses edges of length zero by using the operad composition maps to combine the operad elements associated to their
vertices.
\end{enumerate}
Thus,
$$W\D:=\T\D/\mathrm {Relations } \{(a), (b), (c)\}.$$

\subsection{Realizing the operad action}

Given any tree of the type under discussion with vertices decorated by little discs, our next task is to construct a metric in ${\mathcal R}^{Ric_2>0}_{rd,1}(S^n)$ which reflects both the structure of the tree and the vertex details. Once we have constructed such metrics, we will use them to produce an action of the operad $W\D$ on $\rdok(S^n)$. Theorem C then follows from the existence of such an action.

Our approach to metric construction here mirrors that of the previous section. Given a tree $T$, we will construct a Riemannian manifold $\hat{\sigma}(T)$ as a connected sum of spheres. We then use the connected sum contraction procedure to distill a metric $\sigma(T)\in{\mathcal R}^{Ric_2>0}_{rd,1}(S^n)$ from $\hat{\sigma}(T)$. The various pieces of this connected sum will arise from the edges of the tree, with the little discs at each vertex and the edge lengths feeding into the geometric structure.

We will begin by discussing the `basic unit' of  $\hat{\sigma}(T)$, which corresdponds to an edge of $T$. This will be a disc determined by three parameters: $R,\epsilon,\ell,$ (with $R>0,$ $\epsilon \in (0,R)$ and $\ell \in [0,1]$). Here, $R$ and $\epsilon$ determine the geometric boundary conditions of the disc: just as in previous considerations, a neighbourhood of the boundary will be round of radius $R$, with the boundary itself being a round sphere of intrinsic radius $\epsilon$. As indicated above, $\ell$ will indicate the `length' of the unit (i.e. the radius of the disc), however $\ell$ should be viewed as a {\it proportion}, not an absolute length. With the dimension understood, let us denote this basic unit $B(R,\epsilon,\ell).$ In the case where $\ell=1,$ we simply write $B(R,\epsilon).$

We begin by defining $B(R,\epsilon)$ as follows: $$B(R,\epsilon):= {\mathcal T}_{R,\epsilon} \cup C(\rho_{R,\epsilon},\rho_{100}) \cup (S^n\setminus D^n,100^2ds^2_n).$$ We will introduce a parameter $t$ into this warped product manifold as before, with $t$ measuring distance from the boundary, and the boundary itself corresponding to $t=\tau_{R,\epsilon}=R\cos^{-1}(\epsilon/R).$ The domain of $t$ is $[\tau_{R,\epsilon},L]$, for some $L$ depending on $R,\epsilon.$

We will now explain how to modify $B(R,\epsilon)$ to take account of a value of $\ell \neq 1.$ For this we recall Lemma \ref{blemish}. Taking $\mu(t)$ in this Lemma to be the warping function for the metric on $B(R,\epsilon),$ let $\gamma_s$ be the function established by the Lemma, with $s$ here belonging to the interval $[\tau_{R,\epsilon},L-50\pi].$  Given $\ell \in [0,1],$ define $B(R,\epsilon,\ell)$ to be the disc equipped with the warped product metric $$dt^2+\gamma^2_{\chi(\ell)}(t)ds^2_{n-1},$$ where $$\chi(\ell)=\tau_{R,\epsilon}+\ell(L-50\pi-\tau_{R,\epsilon}).$$
It is clear that when $\ell=1,$ $B(R,\epsilon,\ell)=B(R,\epsilon)$, and it follows from Lemma \ref{blemish} that when $\ell\approx 0$, $B(R,\epsilon,\ell)$ is a disc equipped with the round metric of radius $R$ bounded by the sphere of intrinsic radius $\epsilon$. It follows that relation (c) above will be satisfied by this construction. By Observation \ref{observation}, the central neighbourhood of $B(R,\epsilon,\ell)$ corresponding to $t \in [\chi(\ell)+\alpha(R,\epsilon),L(\chi(\ell))]$ is round, with both the curvature and the radius of this neighbourhood varying smoothly with $\ell$. (Recall that $\alpha(R,\epsilon)$ arises due to smoothing considerations - something we must also be mindful of here.)


We next address the role of little discs in our construction. Suppose an edge corresponding to $B(R,\epsilon,\ell)$ ends with a vertex coloured by an element of ${\mathcal D}(j)$. This element consists of the standard unit ($n$-dimensional) disc, containing a collection of $j$ smaller discs. For convenience we will refer to these as the `big disc' and `little discs' respectively. 

We can identify the big disc and the round region of $B(R,\epsilon,\ell)$ (corresponding to $t \in [\chi(\ell)+\alpha(R,\epsilon),L(\chi(\ell))]$) in a standard way: we begin by stretching the big disc by a uniform factor so as to have the same radius as the round region. Next we equip this scaled disc with the metric $dt^2+R'^2\sin^2(t/R')ds^2_{n-1},$ where $1/R'^2$ is the curvature of the round region in $B(R,\epsilon,\ell).$ This has no effect on distances of points from centre, but causes distortion in directions orthogonal to the radii. We can now identify the two discs via a fixed isometry $\iota$. We will also need to `transfer' the little discs to $B(R,\epsilon,\ell).$ Each little disc $D(p_i,r_i)$ in the original big disc is determined its centre point $p_i$ and its radius $r_i$. Suppose that the stretching factor for the big disc is $\lambda>0$. Then the stretching effect on the point $p_i$ in the Euclidean setting is to move it to the point $\lambda p_i.$ It then follows (for example from the Toponogov Theorem) that the collection of discs $D(\iota(\lambda p_i),\lambda r_i)$ within the round region of $B(R,\epsilon,\ell)$ are disjoint, and hence form an admissible family of little discs in the operad sense. In this way we can map both the big disc and its little discs onto the round region of $B(R,\epsilon,\ell).$ Notice that the geometry of each little disc is determined by a pair $R',\epsilon'$, with $R'$ as above, and $\epsilon'$ being the intrinsic radius of the sphere bounding the little disc.

As noted previously, when $\ell=0$, $B(R,\epsilon,0)$ is a round disc of radius $R$ bounded by the sphere of intrinsic radius $\epsilon$. For convenience we will demand that the big disc in $B(R,\epsilon,0)$ agrees with $B(R,\epsilon,0)$ itself. This forces us into a slight modification of the big disc identification just introduced, since as it stands, the boundary of the big disc when $\ell=0$ corresponds to $t=\tau_{R,\epsilon}+\alpha(R,\epsilon),$ and not to the desired $t=\tau(R,\epsilon).$ Recall that by the proof of Lemma \ref{blemish}, $\gamma_s$ agrees with $R\cos(t/R)$ for $\tau_{R,\epsilon} \le s \le \tau_{R,\epsilon}(\kappa_{R,\epsilon}+1)/2.$ Thus for $s$ in this range, the `smoothing buffer' of length $2\alpha(R,\epsilon)$ is redundant. In our current context, $s=\chi(\ell),$ and we deduce that $B(R,\epsilon,\ell)=B(R,\epsilon,0)$ for $\tau_{R,\epsilon} \le \chi(\ell) \le \tau_{R,\epsilon}(\kappa_{R,\epsilon}+1)/2.$ Thus for $\ell$ such that $\chi(\ell)$ lies in this range, the round region contains the whole of $B(R,\epsilon,0)$. We should therefore expect the radius of the round region to vary discontinuously as $\ell \to 0^+.$ On the other hand, we need the radius of the big disc to increase smoothly with $\ell$ when $\chi(\ell)$ lies in this range, from corresponding to $t \in [\chi(\ell)+\alpha(R,\epsilon),L(\chi(\ell))]$ when $\chi(\ell)=\tau_{R,\epsilon}(\kappa_{R,\epsilon}+1)/2$, to $t \in [\chi(\ell),L(\chi(\ell))]$ when $\chi(\ell)=\tau_{R,\epsilon}.$ To achieve this, we declare that when $\chi(\ell) \in [\tau_{R,\epsilon},\tau_{R,\epsilon}(\kappa_{R,\epsilon}+1)/2],$ the big disc will correspond to $t \in [\chi(\ell)+\xi_{R,\epsilon}(\ell)\alpha(R,\epsilon),L(\chi(\ell))],$ where $$\xi_{R,\epsilon}(\ell):=\phi\Bigr(\frac{\chi(\ell)-\tau_{R,\epsilon}}{\tau_{R,\epsilon}(\kappa_{R,\epsilon}-1)/2}\Bigr).$$

When $\ell=1$, we will demand that the big disc in $B(R,\epsilon)=B(R,\epsilon,1)$ is the hemispherical disc about the centre point corresponding to $t \in [L-50\pi,L]$, (as opposed to $t \in [L-50\pi+\alpha(R,\epsilon),L]$). The point is to render the big disc independent of $R,\epsilon$ when $\ell=1.$ In order to do this in a way which is smooth as $\ell\to 1^-$, we simply employ the strategy in the paragraph above, in conjunction with Remark \ref{start}. (In effect, we scale the $\alpha(R,\epsilon)$-term defining the boundary of the big disc smoothly down to zero, as $s$ increases over the range $[L-50\pi-1,L-50\pi]$.)

We are now in a position to discuss the construction of $\hat{\sigma}(T).$ Given any vertex $v$ in $T$, each edge which meets $v$ is either incoming or outgoing, so we can decorate each edge with an arrow. Overall, this produces a directed tree. As noted previously, to be consistent with the figures, we will orient our trees so that the direction of each edge is upwards on the page. Thus the set of external input edges $\text{In}(T)$ will be at the bottom of the tree, and the single external output edge is at the top. Recall that all the external edges have length one.

We will construct $\hat{\sigma}(T)$ systematically, edge-by-edge, starting at the top of the tree and working downwards, considering all edges at a given `depth' from the top, before moving onto to edges one layer deeper.

To the top (output) edge we will associate the sphere with round metric of radius 100. If the tree is the trivial tree, then we are done. If not, we consider the vextex at the (lower) end of the output edge. We declare the big disc here to be the lower hemisphere, and associate little discs within this big disc as described above. To each of these little discs we have a pair of parameters $R$ and $\epsilon$: for this top vertex we clearly have $R=100$. Taking into account the lengths $\ell$ of the outgoing edges from this vertex, we form a connected sum with the appropriate $B(R,\epsilon,\ell)$ for each little disc. Now continue in this way to the bottom of the tree. Observe that in general, $R$ and $\epsilon$ for each little disc will depend on the length of the previous edge as well as the little disc itself. The result of this process is a Riemannian manifold $\hat{\sigma}(T)$ with 2-positive Ricci curvature. If we declare the north pole of the initial sphere to be the basepoint of the arrangement, then we will always have a hemisphere of (extrinsic) radius 100 about the basepoint irrespective of the tree.

To create the metric $\sigma(T) \in {\mathcal R}^{Ric_2>0}_{rd,1}(S^n)$ from $\hat{\sigma}(T)$, we simply apply the connected sum contraction procedure. We do this systematically in the opposite direction: we start at the bottom and work layer-by-layer up to the top. Notice that within a layer, the order in which we perform the contractions is irrelevant. (We are contracting onto the little discs in the layer immediately above, and by construction these are all disjoint.) Notice that the resulting metric in ${\mathcal R}^{Ric_2>0}_{rd,1}(S^n)$ also has a  hemisphere of radius 100 about the basepoint of $S^n$.

In describing this metric construction, there is one important detail we have so far neglected to mention. In order to accommodate relation (a), we are forced to make a special construction in the case that a vertex of our graph is decorated with an element of ${\mathcal D}(1),$ i.e. a big disc containing a single little disc. If this little disc is equal to the big disc, relation (a) requires that the (unique) incoming edge and the outgoing edge be identified, with the length of the new edge being $\ell_1+\ell_2-\ell_1\ell_2,$ where $\ell_1,\ell_2$ are the lengths of the original edges. We must pay special attention to continuity here: suppose we have a smoothly expanding little disc, with radius $r \to 1^-.$ Relation (a) only applies when $r=1$, but our metric constructions must vary smoothly with respect to $r$. In particular this includes the moment at which $r$ hits 1 and the edges become identified. Our construction in this case is as follows.

Suppose we have an arrangement within our tree where we have a vertex with a single incoming edge of length $\ell_1,$ an outgoing edge of length $\ell_2,$ and a single little disc of radius $r<1.$ When we meet such an arrangment in the construction of $\hat{\sigma}(T)$, we move some of the outgoing length to the incoming edge when creating the manifold. Specifically, we represent the incoming edge by $$B\bigl(R,\epsilon,(1-\psi(r))\ell_1+\psi(r)(\ell_1+\ell_2-\ell_1\ell_2)\bigr)$$ and the outgoing edge by $$B(R',\epsilon',(1-\psi(r))\ell_2),$$ where $\psi:[0,1]\to [0,1]$ is any choice of strictly increasing function with $\psi(0)=0$ and $\psi(1)=1$, which can be extended with constant value 0 in the negative direction and constant value 1 in the positive direction to give a smooth function on all of $\mathbb R$. Notice that $R'$ here depends on the disc associated to the incoming edge, and $\epsilon'$ depends on both $R'$ and $r$.

Now consider what happens in the limit as $r \to 1^-$. Clearly, the length of the incoming edge smoothly converges to $\ell_1+\ell_2-\ell_1\ell_2$, and the length of the outgoing edge to 0. Thus the manifold corresponding to this tree arrangement smoothly deforms into that corresponding to a single edge of length $\ell_1+\ell_2-\ell_1\ell_2$, meaning our construction of $\hat{\sigma}(T)$ (and therefore $\sigma(T)$) is consistent with with relation (a).

(By construction, the big disc at the end of length one edges is always a hemisphere. In the case that the corresponding vertex is decorated by the identity little disc, we cannot apply the Tube Lemma (Lemma \ref{tube}) directly as we have $R=\epsilon$. However this does not matter, since by relation (a) and the construction above, we never need to add a tube in this situation.)

For completeness we should also mention relation (b). This is simply a tree re-labelling relation, and has precisely no implications for our metric constructions. Thus we have, in effect, constructed a well-defined map $W{\mathcal D}_n \to {\mathcal R}^{Ric_2>0}_{rd,1}(S^n).$

There are two further considerations we must address. Firstly, we need to show how to realize the composition operation on trees metrically. We work intially with the manifolds $\hat{\sigma}(T).$ The procedure is as follows: suppose an incoming edge of a tree $T_1$ is to be identified with the outgoing edge of another tree $T_2$ when the trees are composed. Both these edges have length 1, and thus are represented in $\hat{\sigma}(T_i)$ by a connected sum with a radius 100 round sphere. Assuming these trees are non-trivial, there will be further connected sums from the northern hemisphere of the `incoming' sphere in $\hat{\sigma}(T_1)$, and from the southern hemisphere of the `outgoing' sphere in $\hat{\sigma}(T_2)$. This means that we can remove the sourthern hemisphere from the first of these spheres, and the northern hemisphere from the latter, and simply glue the two resulting boundary components together. Notice that the resulting Riemannian manifold agrees with $\hat{\sigma}(T_1 \star T_2),$ where $\star$ denotes the composition operation. It is elementary to observe that this association of Riemannian manifolds to trees is associative with resepect to the composition of trees, as our joining of manifolds does not affect the geometry either higher or lower in the construction.

Of course, we actually need to realize the composition operation on trees in the space ${\mathcal R}^{Ric_2>0}_{rd,1}(S^n),$ and this requires repeated connected sum contractions as discussed above. Associativity is automatic here, as a consequence of the order in which the contractions are performed.

(Note that the special arrangement considered above, involving two edges linked by the identity little disc as vertex, cannot be created by joining two graphs.)

Our final task is to define an operad action: specifically the action of $W{\mathcal D}_n$ on the space $\rdok(S^n).$ (See \cite[7.6]{Wa} for the precise definition of operad actions on topological spaces.) Given a tree $T$ with $|\text{In}(T)|=j,$ we take a $j$-tuple $(g_1,\cdots,g_j)$ of metrics in $\rdok(S^n)$. For each $g_i$, we form the Riemannian manifold $\hat{\sigma}(g_i,e)$ as defined in Section \ref{Hspacesection}. The basepoint of this manifold lies in the middle of a hemisphere of radius 100. We remove this hemisphere, and also the southern (radius 100) hemisphere corresponding to the $i^{th}$ input edge of $T$. We can now smoothly glue these two boundary components together. Repeating this procedure for each $i=1,\cdots,j$ creates a closed Riemannian manifold we will denote $\hat{\sigma}(T;g_1,\cdots,g_j).$ Performing connected sum contractions from the bottom (i.e. starting with the $g_i$) to the top, then yields a metric $\sigma(T;g_1,\cdots,g_j)$ on $S^n$. This will be the metric which results from the tree $T$ acting on $(g_1,\cdots,g_j)$. It is easily checked that the collection (over $j$) of the resulting maps $$W\D(j) \times \rdok(S^n)^j \to \rdok(S^n)$$ indeed form an operad action, and in particular satisfy the associativity requirement stated in \cite[7.6(2)]{Wa}. To aid the reader we provide in Fig. \ref{operadaction} a schematic depiction in the case when $j=4$.

\begin{figure}[!htbp]
\vspace{9.0cm}
\hspace{-2.0cm}
\begin{picture}(0,0)
\includegraphics{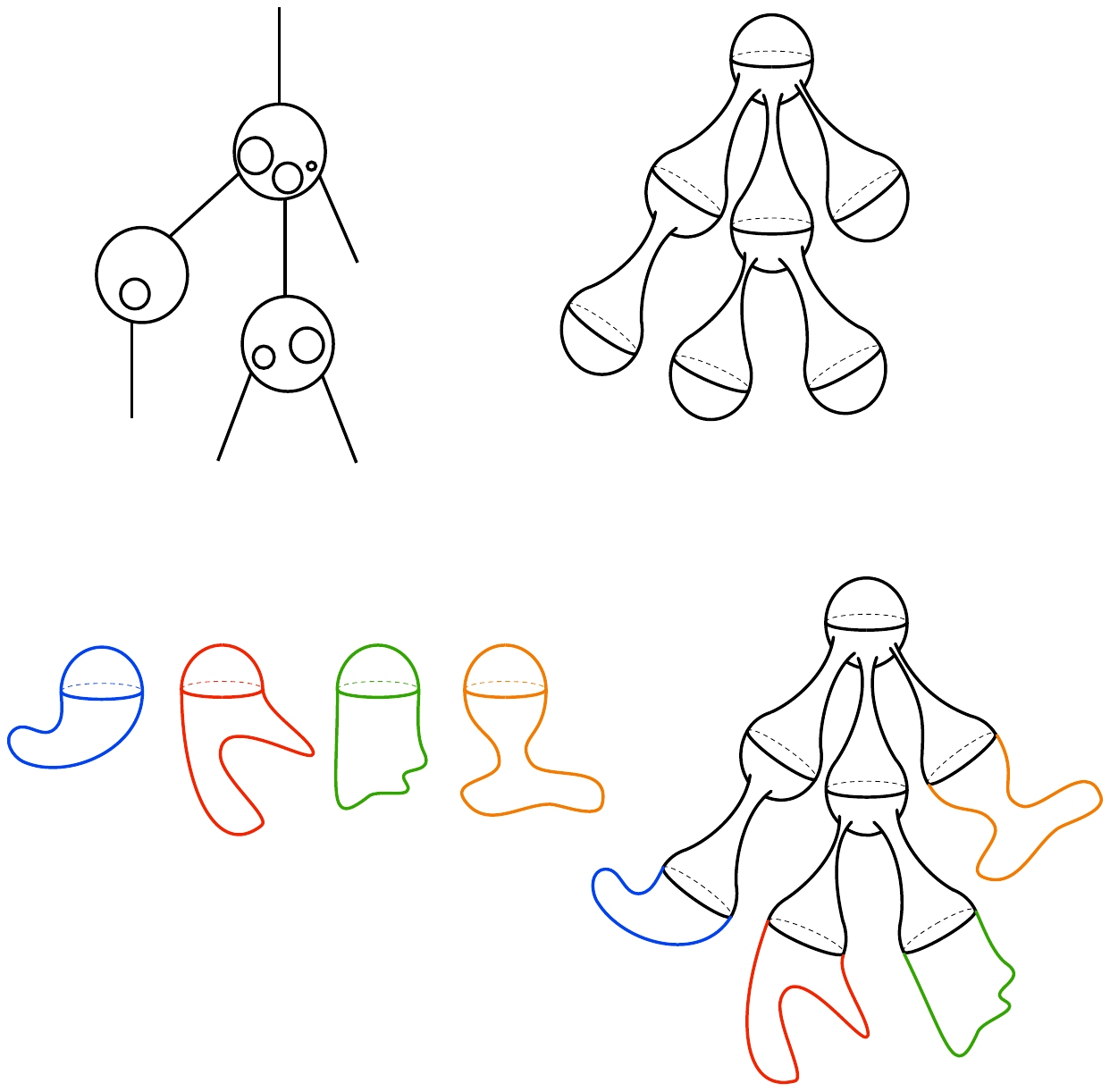}%
\end{picture}
\setlength{\unitlength}{3947sp}%
\begingroup\makeatletter\ifx\SetFigFont\undefined%
\gdef\SetFigFont#1#2#3#4#5{%
  \reset@font\fontsize{#1}{#2pt}%
  \fontfamily{#3}\fontseries{#4}\fontshape{#5}%
  \selectfont}%
\fi\endgroup%
\begin{picture}(5079,1559)(1902,-7227)
\put(2800,-1856){\makebox(0,0)[lb]{\smash{{\SetFigFont{10}{8}{\rmdefault}{\mddefault}{\updefault}{\color[rgb]{0,0,0}$T$}%
}}}}
\put(5200,-1856){\makebox(0,0)[lb]{\smash{{\SetFigFont{10}{8}{\rmdefault}{\mddefault}{\updefault}{\color[rgb]{0,0,0}$\hat{\sigma}(T)$}%
}}}}
\put(2400,-4656){\makebox(0,0)[lb]{\smash{{\SetFigFont{10}{8}{\rmdefault}{\mddefault}{\updefault}{\color[rgb]{0,0,0}$g_1$}%
}}}}
\put(3050,-4656){\makebox(0,0)[lb]{\smash{{\SetFigFont{10}{8}{\rmdefault}{\mddefault}{\updefault}{\color[rgb]{0,0,0}$g_2$}%
}}}}
\put(3800,-4656){\makebox(0,0)[lb]{\smash{{\SetFigFont{10}{8}{\rmdefault}{\mddefault}{\updefault}{\color[rgb]{0,0,0}$g_3$}%
}}}}
\put(4500,-4656){\makebox(0,0)[lb]{\smash{{\SetFigFont{10}{8}{\rmdefault}{\mddefault}{\updefault}{\color[rgb]{0,0,0}$g_4$}%
}}}}
\put(5700,-4300){\makebox(0,0)[lb]{\smash{{\SetFigFont{10}{8}{\rmdefault}{\mddefault}{\updefault}{\color[rgb]{0,0,0}$\hat{\sigma}(T;g_1,g_2,g_3,g_4)$}%
}}}}
\end{picture}%
\caption{}
\label{operadaction}
\end{figure}

In summary, we have now established the following analogue of Lemma 8.2 in \cite{Wa}.
\begin{lemma}\label{WDaction}
When $n\geq 3$, $k\geq 2$, the space $\rdok(S^{n})$ is a $W\D_{n}$-space.
\end{lemma}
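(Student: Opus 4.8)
The goal is to show that the map $W\D_n \to \rdok(S^n)$ constructed above in fact assembles into an operad action in the precise sense of \cite[7.6]{Wa}. Recall that an operad action of $W\D$ on a space $Z$ consists of maps $W\D(j)\times Z^j \to Z$ (for all $j\geq 0$) which are $\Sigma_j$-equivariant, unital, and associative with respect to the fitting maps of $W\D$. We have already constructed the maps $\Theta_j : W\D(j)\times \rdok(S^n)^j \to \rdok(S^n)$, $(T;g_1,\dots,g_j)\mapsto \sigma(T;g_1,\dots,g_j)$; what remains is to verify continuity, and each of the three axioms.

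First I would address \emph{well-definedness and continuity}. Well-definedness amounts to checking that the assignment $T \mapsto \hat\sigma(T)$ (and hence $\sigma(T)$) is compatible with relations (a), (b), (c) of section 7.5 of \cite{Wa}; this has essentially been carried out in the construction above — relation (c) via the behaviour of $B(R,\epsilon,\ell)$ as $\ell\to 0^+$ (Lemma \ref{blemish} makes $B(R,\epsilon,0)$ genuinely round), relation (a) via the length-transfer device $(1-\psi(r))\ell_1+\psi(r)(\ell_1+\ell_2-\ell_1\ell_2)$ which interpolates continuously to the collapsed edge as $r\to 1^-$, and relation (b) being a pure relabelling with no metric content. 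Continuity of $\Theta_j$ in the remaining variables then follows because every ingredient — the tubes $\mathcal{T}_{R,\epsilon}$ (Lemma \ref{tube}), the connecting pieces $C(c_1,c_2)$, the warped-product deformation functions $\gamma_s$, the big/little disc identifications, and the connected sum contraction maps $\bar\Delta$ of Definition \ref{Delta} — was arranged to depend smoothly (hence continuously) on its parameters $R,\epsilon,\ell$, the little-disc data, and the input metrics $g_i$; and the topology on $W\D(j)$ is generated from that of the $\D(m)$'s and the edge-length coordinates, all of which enter continuously.

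Next I would verify the \emph{three axioms}. For $\Sigma_j$-equivariance: the symmetric group acts on $W\D(j)$ by permuting the labels $\beta$ on the input edges; permuting these labels permutes which $g_i$ is glued to which input sphere of $\hat\sigma(T)$, and since the contraction procedure is performed identically at each such gluing, $\sigma(T\cdot\tau; g_1,\dots,g_j) = \sigma(T; g_{\tau(1)},\dots,g_{\tau(j)})$ on the nose. Unitality is the statement that the trivial tree in $W\D(1)$ acts as the identity: for the trivial tree, $\hat\sigma(T;g) = \hat\sigma(g,e)$ and the contraction produces $\sigma(g,e)$, which by Proposition \ref{identity} is the $\sigma$-product with the homotopy identity — but one checks directly from the construction that with the trivial tree (a single length-one edge, no vertex) the resulting manifold is just $(S^n,g)$ after contraction, so the action is strictly unital. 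Associativity — compatibility with the fitting maps $\gamma: W\D(k)\times W\D(j_1)\times\cdots\times W\D(j_k)\to W\D(j_1+\cdots+j_k)$ — is exactly the content of the paragraphs above on realizing tree composition metrically: composing $T$ with $(T_1,\dots,T_k)$ corresponds to the connected-sum gluing of $\hat\sigma(T_i)$ onto the appropriate input spheres of $\hat\sigma(T)$, which does not disturb the geometry elsewhere, and then the contractions — performed bottom-to-top — commute with this gluing because within a layer the contractions are onto disjoint little discs and hence order-independent. This gives $\sigma(\gamma(T;T_1,\dots,T_k); g_1,\dots,g_j) = \sigma(T; \sigma(T_1;\dots),\dots,\sigma(T_k;\dots))$ as required by \cite[7.6(2)]{Wa}.

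The \emph{main obstacle} is the continuity verification, and specifically continuity at the "degenerate" loci of $W\D(j)$: where an internal edge length hits $0$ (relation (c)), where a little disc expands to fill its big disc with a vertex in $\D(1)$ (relation (a)), and where $\ell\to 1^-$ on a non-external edge so that a big disc must become independent of $(R,\epsilon)$. At each of these loci the naive construction would jump — the radius of a round region varies discontinuously as $\ell\to 0^+$, and an $\alpha(R,\epsilon)$-sized smoothing buffer is present for $\ell$ slightly positive but absent at $\ell=0$ — and the constructions above (the functions $\xi_{R,\epsilon}(\ell)$, $\psi(r)$, $\chi(\ell)$, and the use of Remark \ref{start}) were precisely engineered to absorb these discontinuities. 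I would therefore spend the bulk of the proof confirming, case by case, that the three relations defining $W\D$ from $\T\D$ are respected smoothly, after which equivariance, unitality and associativity are routine. Once Lemma \ref{WDaction} is in hand, Theorem \ref{BVthm} converts the $W\D_n$-action into (the existence of) a $\D_n$-action, and restricting to the path component of the round metric, Theorem \ref{BVM} yields Theorem C.
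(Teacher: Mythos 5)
Your proposal follows essentially the same route as the paper: the paper's ``proof'' of Lemma~\ref{WDaction} is really the entirety of the construction in Section~\ref{LSS} together with the closing assertion that the maps form an operad action, and your outline itemises exactly what that construction must deliver --- compatibility with relations~(a)--(c), continuity in the operad and metric variables, $\Sigma_j$-equivariance, unitality, and associativity of the action with respect to the fitting maps. The continuity and associativity discussions accurately reflect what the paper does (in particular the role of $\xi_{R,\epsilon}(\ell)$, $\chi(\ell)$, the length-transfer device for identity vertices, and the order-independence of layer-by-layer contractions).

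The one place your argument is internally inconsistent is the unitality step. You write that for the trivial tree $\hat\sigma(T;g)=\hat\sigma(g,e)$, so that contraction produces $\sigma(g,e)$ --- which by Proposition~\ref{identity} is only \emph{homotopic} to $g$ --- and then immediately assert that ``one checks directly from the construction that \dots the resulting manifold is just $(S^n,g)$ after contraction, so the action is strictly unital.'' These two sentences contradict each other, and the second is unjustified. Since the axioms for a $W\D_n$-space (as for any operad action in the sense of May and of Boardman--Vogt) require the operadic unit to act \emph{strictly} as the identity, this is a genuine point that needs resolving: one must either argue that the connected-sum contraction applied to $\hat\sigma(g,e)$ recovers $g$ on the nose when the tree is trivial, or define the action of the trivial tree to be literally the identity and then verify continuity at that locus. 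The paper does not make this explicit either (it defers to ``it is easily checked''), but a proof should not both derive $\sigma(g,e)$ and then assert $(S^n,g)$ in the same breath.
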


\begin{proof}[Proof of Theorem C] 
Simply combine Lemma \ref{WDaction} with Theorems \ref{BVthm} and \ref{BVM}. This shows that the path component of $\rdok(S^n)$ containing the round metric is weakly homotopy equivalent to an $n$-fold loop space. Now use the homotopy equivalence between $\rdok(S^n)$ and ${\mathcal R}^{Ric_k>0}(S^n)$ to deduce the analogous conclusion for ${\mathcal R}^{Ric_k>0}(S^n)$.
\end{proof}  

\subsection{The Grouplike Condition}\label{grouplike}
A natural question to ask is whether Theorem C can be strengthened to hold for the entire space ${\mathcal R}^{Ric_k>0}(S^n)$, and not just the path component containing the round metric. The recognition principle, Theorem \ref{BVM} above, has a stronger form which allows us to recognise when a $\D_{n}$-space, $Z$, is a loop space when $Z$ is not path connected. To understand this we need a definition. Given an $H$-space, $Z$, with a multiplication $\sigma$, we can induce a multiplication $\bar{\sigma}$ on $\pi_{0}(Z)$ in the obvious way:
$$\bar{\sigma}([x],[y])=[\sigma(x,y)],$$
where $x,y\in Z$ and $[x], [y]\in\pi_{0}(Z)$ denote their respective path components. That this is well-defined is an easy exercise. The $H$-space $Z$ is said to be {\em grouplike} if $\pi_{0}(Z)$ forms a group under this multiplication. Note that a $\mathcal{P}$-space, $Z$, (where $\mathcal{P}$ is an operad), is also an $H$-space under the multiplication obtained by restricting the action to any element $c\in \mathcal{P}(2)$. In other words, given such a $c\in\mathcal{P}(2)$ and $x,y\in Z$ we define $$\sigma(x,y)=c.(x,y),$$
where the latter is the operad action on the pair $(x,y)$. Thus, we say that a a $\mathcal{P}$-space $Z$ is {\em grouplike} if this multiplication induces a group multiplication on $\pi_{0}(Z)$. The stronger version of Theorem \ref{BVM} is as follows.
\begin{theorem}[Boardman and Vogt \cite{BV}, May \cite{May}]\label{BVM2}
For any $n\in \mathbb{N}$, a grouplike ${\D}_n$-space, $Z$, is weakly homotopy equivalent to an $n$-fold loop space.
\end{theorem}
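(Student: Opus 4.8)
The plan is to treat Theorem~\ref{BVM2} as an instance of the May--Boardman--Vogt recognition principle: in the body of the paper we simply invoke \cite{BV} and \cite{May}, but the shape of the argument is as follows. First I would reduce to the little cubes operad: the operad $\D_n$ of little $n$-discs is linked to the operad $\mathcal{C}_n$ of little $n$-cubes by a zig-zag of operad maps, each a weak equivalence on every constituent space (replacing round discs by cubes is homotopically inessential), and since the recognition principle is invariant under such equivalences of operads it suffices to prove the statement with $\mathcal{C}_n$ in place of $\D_n$. Next I would pass from operad actions to monad algebras: there is a monad $C$ on well-pointed spaces whose algebras are precisely $\mathcal{C}_n$-spaces, and for every based space $Y$ the iterated loop space $\Omega^n Y$ carries a canonical $C$-algebra structure, obtained by placing $j$ based maps $S^n\to Y$ disjointly inside $S^n$ according to a configuration of little cubes and collapsing the complement. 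This already yields the easy converse direction, namely that every $n$-fold loop space is a $\D_n$-space.

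For the forward direction, given a grouplike $\mathcal{C}_n$-space $Z$ I would construct an explicit candidate $n$-fold delooping. Using the adjunction between the reduced $n$-fold suspension $\Sigma^n$ and $\Omega^n$, form the simplicial space $B_\bullet(\Sigma^n, C, Z)$ --- the two-sided bar construction --- and set $B_n Z := |B_\bullet(\Sigma^n, C, Z)|$; functoriality of the bar construction produces a natural comparison map $\iota_Z : Z \longrightarrow \Omega^n B_n Z$. The goal is then to show that $\iota_Z$ is a weak homotopy equivalence whenever $Z$ is grouplike. Since $\Omega^n$ commutes up to weak equivalence with geometric realization of Reedy-cofibrant simplicial spaces, one has $\Omega^n B_n Z \simeq |B_\bullet(\Omega^n\Sigma^n, C, Z)|$, and comparing this levelwise with $|B_\bullet(C,C,Z)|$ --- whose realization is canonically $Z$ --- via the approximation map $\alpha_n : CX \to \Omega^n\Sigma^n X$ reduces the whole question to understanding $\alpha_n$.

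The hard part is precisely this analysis of $\alpha_n$, and it is here that the grouplike hypothesis enters. The Approximation Theorem asserts that $\alpha_n : CX \to \Omega^n\Sigma^n X$ is a weak equivalence for path-connected well-pointed $X$; I would prove it by filtering $CX$ by the number of little cubes, matching this against the corresponding (James/Milgram-type) filtration of $\Omega^n\Sigma^n X$, identifying the successive subquotients on both sides with the configuration-space models $\mathcal{C}_n(k)_+\wedge_{\Sigma_k} X^{\wedge k}$ of the Snaith splitting, and inducting. For a general $C$-algebra $Z$, however, the map from $CZ$ to $\Omega^n\Sigma^n Z$ is only a \emph{homology group completion}: on homology it inverts the action of the monoid $\pi_0$. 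The role of the grouplike assumption on $Z$ is exactly that $\pi_0(Z)$ is already a group, so this localization is the identity and the connectivity restriction in the Approximation Theorem can be dropped in the bar-construction comparison above --- making $\iota_Z$ a weak equivalence. So the genuine obstacle, and the technical heart of the matter, is the group-completion theorem (in the form of Quillen and May) together with the homological computation behind the Approximation Theorem; everything else is formal manipulation of bar constructions and of the suspension--loop adjunction.
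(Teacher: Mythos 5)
The paper does not prove Theorem~\ref{BVM2}; it is cited directly from Boardman--Vogt and May, so there is no ``paper's proof'' to compare against. What you have written is a reconstruction of May's recognition principle, and its overall shape is correct: the reduction of $\D_n$ to the little $n$-cubes operad, passage to the associated monad $C$, the two-sided bar construction $B(\Sigma^n, C, Z)$ as the delooping candidate, and the reduction of the whole question to the Approximation Theorem for $\alpha_n : CX \to \Omega^n\Sigma^n X$ together with the group completion theorem in the grouplike case --- these are the actual ingredients of the proof in \cite{May}.

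Two caveats, in case you wanted to turn the sketch into a genuine argument. First, the step you describe as ``$\Omega^n$ commutes up to weak equivalence with geometric realization of Reedy-cofibrant simplicial spaces'' is not a formal manipulation: in general $\Omega$ does \emph{not} commute with realization, and establishing the comparison $\Omega^n|B_\bullet(\Sigma^n, C, Z)| \simeq |B_\bullet(\Omega^n\Sigma^n, C, Z)|$ requires connectivity hypotheses on the simplicial levels, which May handles via a quasifibration argument. This is a second genuine point at which the grouplike hypothesis (or, in the connected case, connectivity) is used, not just in the Approximation Theorem, so classifying it as formal bar-construction bookkeeping understates what is going on. Second, your appeal to the Snaith splitting for the Approximation Theorem is anachronistic: May's original argument filters $C_n X$ by the number of little cubes and uses a configuration-space/quasifibration analysis; the stable splitting of Snaith came later, as a corollary of the same filtration rather than an input to it. Neither point changes the logical skeleton, but both would need to be repaired if this were to stand as an actual proof rather than a summary of the cited literature.
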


In our case, the multiplication induced on $\pi_{0}(\rdk(S^{n}))$ is exactly the same whether we use the original $H$-space multiplication defined in Section \ref{Hspacesection} or the operad action above. The question of whether this multiplication is grouplike is still open and, it seems, hard: the significant challenge is satisfying the inverse axiom. This is discussed somewhat in \cite[Sec 9]{Wa} with regard to the scalar curvature $(k=n)$, and is related to the very difficult problem of deciding whether concordant metrics of positive scalar curvature are isotopic.


\end{document}